\documentclass[]{article}

\addtolength{\oddsidemargin}{-.3in}
\addtolength{\evensidemargin}{-.3in}
\addtolength{\textwidth}{0.6in}
\addtolength{\topmargin}{-.3in}
\addtolength{\textheight}{0.6in}

\usepackage{graphicx}
\usepackage{amsmath}
\usepackage{amssymb}
\usepackage{amsthm} 
\usepackage{bm}
\usepackage{enumerate}
\usepackage{color}
\usepackage{mathdots}
\usepackage{sectsty}
\usepackage{hyperref}
\hypersetup{hidelinks}
\usepackage{tikz}
\usepackage{caption}
\usepackage{adjustbox}
\usepackage{yhmath}
\usepackage{tikz-cd}
\usepackage{mathrsfs}
\usepackage{quiver}

\sectionfont{\scshape\centering\fontsize{12}{14}\selectfont}
\subsectionfont{\scshape\fontsize{12}{14}\selectfont}
\usepackage{fancyhdr}

\newcommand\shorttitle{Quasicoherent sheaves for dagger analytic geometry}
\newcommand\authors{Arun Soor}

\fancyhf{}

\fancyhead[C]{%
\ifodd\value{page}
  \small\scshape\authors
\else
  \small\scshape\shorttitle
\fi
}
\fancyfoot[C]{\thepage}

\pagestyle{fancy}

   %% And a not so common one.

\newtheorem{thm}{Theorem}[section]
\newtheorem{cor}[thm]{Corollary}
\newtheorem{lem}[thm]{Lemma}
\newtheorem{defn}[thm]{Definition}
\newtheorem{rmk}[thm]{Remark}
\newtheorem{example}[thm]{Example}
\newtheorem{prop}[thm]{Proposition}

\newtheorem*{claim*}{Claim}

\newcommand*{\sheafhom}{\mathcal{H}\kern -.5pt om}

\newcommand{\Addresses}{{% additional braces for segregating \footnotesize
  \bigskip
  \footnotesize

  Arun Soor, \textsc{Mathematical Institute, University of Oxford,
    Andrew Wiles Building, Radcliffe Observatory Quarter, Woodstock Road, Oxford, United Kingdom OX2 6GG}\par\nopagebreak
  \textit{E-mail address}, Arun Soor, \texttt{\href{mailto:arun.soor@maths.ox.ac.uk}{arun.soor@maths.ox.ac.uk} }}}

\title{\large \bf Quasicoherent sheaves for dagger analytic geometry}
\author{Arun Soor}
\date{}
\begin{document}
\maketitle
\abstract{We develop a theory of quasicoherent sheaves on dagger analytic varieties based on Ind-Banach spaces. We show that they satisfy descent in the analytic topology. We define compactly supported pushforwards and produce an adjunction $f_! \dashv f^!$, and produce an excision sequence for certain inclusions of admissible open subsets. Finally, we prove a Grothendieck duality theorem.}
\tableofcontents
\section{Introduction}
This work is borne from the author's desire for a good theory of quasicoherent sheaves in $p$-adic analytic geometry, whilst taking a somewhat elementary and classical approach. The groundbreaking works of \cite{andreychev_pseudocoherent_2021,CondensedMathematics,CondensedComplexGeometry,mann_p-adic_2022,Camargo_deRham}, represent several highly successful approaches in various contexts already. In contrast and complement to theirs, we try to present what can be done without the use of condensed mathematics, derived geometry, animation, categorified locales, or compactifications. Since we have decided not to use derived geometry, we cannot obtain a six-functor formalism in the sense of \cite{mann_p-adic_2022}, as we will not be able to obtain arbitrary base change for lower shriek functors. 

Let us first try to explain the obstructions to such a theory. It is well known due to the example of Gabber \cite[Example 2.1.6]{ConradRelativeAmpleness} that the na\"ive definition of a quasicoherent sheaf is ill-behaved in analytic geometry. The inherent pathology is the failure of flatness of localizations, with respect to the completed tensor product. To fix this, for our theory of quasicoherent sheaves we will need to glue categories of complexes of modules along derived localizations. However, mapping cones in triangulated categories fail to be functorial in general and so one cannot, for instance, define a ``glued sheaf" along two open subsets to be the ``fiber of the two restriction morphisms". This necessitates the introduction of higher algebra; instead of triangulated categories, we use stable $\infty$-categories; this is also a feature of \cite{andreychev_pseudocoherent_2021,mann_p-adic_2022,CondensedComplexGeometry}. We assume familiarity with the basics, and direct the reader to \cite{HigherAlgebra,HigherToposTheory,kerodon} for details.

Moreover, our categories of modules should take into account the analytic structure of the rings we work with, and so we should work with modules in, for instance, Banach, ind-Banach, or bornological spaces. These categories fail to be abelian. The observation of Schneiders and Prosmans \cite{schneiders_quasi-abelian_1999,ProsmansQuasiAbelienne,ProsmansHomological} was that, more often than not, they are still a very special kind of exact category known as a quasi-abelian category, amenable to homological algebra. In particular every quasi-abelian category admits a fully faithful embedding into an abelian category known as its left heart, such that the embedding induces an equivalence on derived categories. In \cite{kelly_homotopy_2021}, Kelly produced a projective model structure on unbounded chain complexes valued in certain exact categories. In \S\ref{sec:derivedquasiabelian}, we rely crucially on his work, and describe a mostly formal procedure to produce a stack of quasicoherent sheaves on a site whose underlying category is the opposite category of a certain category of monoids in a nice enough quasi-abelian category. Many of these general constructions have already appeared in \cite{toen_homotopical_2008,CondensedMathematics} and the recent work \cite{DAnG}.

Let us now explain our results. As our preferred theory of $p$-adic analytic geometry, we use Grosse-Kl\"onne's dagger analytic spaces \cite{GrosseKlonneRigid}, (see \S\ref{sec:quasicoherent} for a brief introduction). These are constructed in an entirely analogous way to Tate's rigid analytic spaces, but are locally modelled on algebras of overconvergent power series. Let $K/\mathbb{Q}_p$ be any complete field extension and let $A$ be a $K$-dagger algebra. We view $A$ as a monoid in $\mathsf{IndBan}_K$, the $\operatorname{Ind}$-category of $K$-Banach spaces. By the formalism of \S \ref{sec:derivedquasiabelian}, we obtain, by localization of model categories, a $\mathsf{CAlg}(\mathsf{Pr}^L_{\mathsf{st}})$-valued\footnote{Here $\mathsf{CAlg}(\mathsf{Pr}^L_{\mathsf{st}})$ is the category of presentably symmetric monoidal stable $\infty$-categories with colimit preserving symmetric monoidal functors between them.} prestack $\operatorname{QCoh}$ whose value on objects is given as\footnote{This coincides with the definition of quasicoherent sheaves as given in the overconvergent context of \cite{DAnG}.} 
\begin{equation}
    \operatorname{QCoh}(\operatorname{Sp}(A)) := \mathsf{Mod}_A (N(\operatorname{Ch}(\mathsf{IndBan}_K))[W^{-1}])
\end{equation}
and which sends $f: \operatorname{Sp}(A) \to \operatorname{Sp}(B)$ to the (derived) pullback $f^*$. In \S\ref{sec:coherentsheaves}, we define a sub-prestack $\operatorname{Coh}^- \subset \operatorname{QCoh}$, such that $\operatorname{Coh}^-(\operatorname{Sp}(A))$ is the full subcategory of $\operatorname{QCoh}(\operatorname{Sp}(A))$ on cohomologically bounded above complexes whose cohomology modules are finitely generated $A$-modules.  Let $\mathsf{Afnd}^\dagger_w$ be the big site of all dagger affinoid varieties with the weak G-topology and let $\mathsf{Rig}_s^\dagger$ be the big site of all dagger analytic varieties with the strong G-topology.
\begin{thm}\label{thm:intro1}(c.f. \S\ref{sec:quasicoherent}, \S\ref{sec:coherentsheaves}).
\begin{enumerate}[(i)]
    \item $\operatorname{QCoh}$ is a $\mathsf{CAlg}(\mathsf{Pr}^L_{\mathsf{st}})$-valued sheaf on $\mathsf{Afnd}^\dagger_w$ and $\operatorname{Coh}^-$ is a sheaf of stably symmetric monoidal $\infty$-categories on $\mathsf{Afnd}^\dagger_w$. Right Kan extension along $\mathsf{Afnd}^\dagger \to \mathsf{Rig}_s^\dagger$ makes $\operatorname{QCoh}$ into a $\mathsf{CAlg}(\mathsf{Pr}^L_{\mathsf{st}})$-valued sheaf on $\mathsf{Rig}_s^\dagger$ and $\operatorname{Coh}^-$ into a sheaf of stable symmetric monoidal $\infty$-categories on $\mathsf{Rig}_s^\dagger$.
    \item For a morphism $f: X \to Y$ of dagger analytic varieties, the induced pullback functor $f^*: \operatorname{QCoh}(Y) \to \operatorname{QCoh}(X)$ admits a right adjoint $f_*$. 
    \item If $f$ is qcqs then $f_*$ commutes with restrictions, commutes with all colimits and satisfies\footnote{We deliberately suppressed the fact that various functors are really derived.} the projection formula $f_* \widehat{\otimes}_Y \operatorname{id} \xrightarrow[]{\sim} f_*(\operatorname{id}\widehat{\otimes}_X f^*)$. 
    \item If $t: Y^\prime \to Y$ is a \emph{transversal morphism} (Definition \ref{defn:transversalglobal}) then the canonical morphism\footnote{Here $f^\prime$ (resp. $t^\prime$), is the base-change of $f$ (resp $t$), along $t$, (resp. $f$).} $t^*f_* \to f^\prime_*t^{\prime,*}$ is an equivalence. Every smooth morphism of dagger-analytic varieties is transversal. 
\end{enumerate}
\end{thm}
In \S\ref{sec:compactsupports}, for any morphism $f:X \to Y$ of dagger analytic varieties we define a certain poset $c(X/Y)$ of admissible open\footnote{``Admissible open" will always mean an admissible open subset in the strong G-topology.} subsets of $X$, and we say that $M \in \operatorname{QCoh}(X)$ is compactly supported (relative to $Y$) if there exists $Z \in c(X/Y)$ such that $i^*M \simeq 0$, where $i: X \setminus Z \to X$ is the inclusion. The poset $c(X/Y)$ should intuitively be thought of as a collection of ``wide open" subsets each of which can be covered by an admissible open subset which is quasi-compact over $Y$. We define $f_!$ as the left Kan extension of $f_*$ from the full subcategory of compactly supported (relative to $Y$) objects. If $f$ is quasi-compact then $f_! = f_*$. Then the following Theorem says that $f_!$ bootstraps the good properties that pushforwards along a quasi-compact morphism has, under the hypothesis that $f$ is \emph{exhaustible} - this simply means that $\{Z: Z \in c(X/Y)\}$ is an admissible cover of $X$. 
\begin{thm}\label{thm:intro2}(c.f. \S\ref{sec:compactsupports}).
Let $f: X \to Y$ be any morphism of dagger analytic varieties. 
\begin{enumerate}[(i)]
    \item The functor $f_!$ admits a right adjoint $f^!$. 
    \item Assume that $f$ is exhaustible. Then functor $f_!$ commutes with restrictions, satisfies transversal base-change, and satisfies the projection formula $f_! \widehat{\otimes}_Y \operatorname{id} \xrightarrow[]{\sim} f_!(\operatorname{id} \widehat{\otimes}_X f^*)$.
    \item Let $g: Y \to Z$ be a further morphism of dagger analytic varieties and assume that $gf$ is exhaustible. Then $(gf)_! \simeq g_! f_!$ if either $g$ is quasi-compact, or $g$ is partially proper and $Z$ is quasi-paracompact and separated.
\end{enumerate}
\end{thm}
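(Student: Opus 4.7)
The general strategy is to reduce each claim to a property of the ordinary pushforward on compactly supported modules, where $f_!$ agrees with $f_*$ by construction, and then to extend to all of $\operatorname{QCoh}(X)$ using that $f_!$ is automatically cocontinuous as a left Kan extension.

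For part (i), colimit-preservation of $f_!$ is a formal consequence of its construction as a left Kan extension, once one notes that the full subcategory $\operatorname{QCoh}^c(X/Y) \subseteq \operatorname{QCoh}(X)$ of compactly supported objects generates $\operatorname{QCoh}(X)$ under colimits (any $M$ should be the filtered colimit, indexed by $c(X/Y)$, of the compactly supported excisions of $M$ onto each $Z \in c(X/Y)$). Presentability of $\operatorname{QCoh}(Y)$ together with the adjoint functor theorem then produces a right adjoint $f^!$. For the projection formula, both sides of the natural comparison are colimit-preserving in the $\operatorname{QCoh}(X)$-argument, since $\widehat{\otimes}^\mathbf{L}_Y$ preserves colimits in each variable ($\operatorname{QCoh}(Y) \in \mathsf{Comm}(\mathsf{Pr}^L_{\mathsf{st}})$), so it suffices to check on a compactly supported $M$. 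There $f_! M = f_* M$ may be computed on a quasi-compact admissible open $W_0$ covering the support, and Theorem~\ref{thm:intro1}(ii) applied to the quasi-compact morphism $f|_{W_0} : W_0 \to Y$ yields the result. Compatibility with restrictions along $j : V \hookrightarrow Y$ is the same argument applied to the base change of $f$, using that base change preserves quasi-compactness so that restriction sends $\operatorname{QCoh}^c(X/Y)$ into $\operatorname{QCoh}^c(X_V/V)$.

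For part (ii), the same reduction brings everything to the case of compactly supported $M \in \operatorname{QCoh}(X)$. If $g$ is quasi-compact, then composition of quasi-compact morphisms together with the standard base-change-of-qc arguments show that $c(X/Y)$ and $c(X/Z)$ induce the same compactly supported subcategory inside $\operatorname{QCoh}(X)$, and $Y \in c(Y/Z)$ forces $g_! = g_*$; hence for $M$ compactly supported one computes
\begin{equation*}
(gf)_! M \;=\; (gf)_* M \;=\; g_*(f_* M) \;=\; g_*(f_! M) \;=\; g_!(f_! M),
\end{equation*}
and extending by colimits completes the argument. If $g$ is partially proper and $Z$ is paracompact, then for $M$ supported in $W \in c(X/Y)$ I would use partial properness of $g$ (which prevents the image of $f(W)$ in $Y$ from escaping to the boundary over $Z$) together with a locally finite cover of $Z$ by quasi-compact admissible opens (furnished by paracompactness) to construct an element $W' \in c(Y/Z)$ that supports $f_* M = f_! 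M$. This shows $f_! M$ is compactly supported over $Z$ as an object of $\operatorname{QCoh}(Y)$, and the identical chain of equalities gives $(gf)_! M \cong g_!(f_! M)$; colimits handle the general case.

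The chief obstacle is the partially proper case: one must explicitly exhibit $W' \in c(Y/Z)$ given $W \in c(X/Y)$, and this construction must genuinely exploit both partial properness of $g$ and paracompactness of $Z$, patching local quasi-compact-over-$Z$ witnesses across the locally finite cover in a way compatible with the support of $f_* M$. The remaining higher coherences for the natural equivalences in the $\infty$-categorical setting are expected to be formal from the model-categorical description of \S\ref{sec:derivedquasiabelian}, but will still require some verification.
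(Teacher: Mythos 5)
Your overall strategy matches the paper's: reduce to compactly supported objects via the Kan extension formula, invoke the quasi-compact case (Proposition~\ref{prop:pushforward}), and isolate the hard technical work in the partially proper case. Two issues deserve attention, however.

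First, the justification for colimit-preservation of $f_!$ is not correct as stated. You claim that "any $M$ should be the filtered colimit, indexed by $c(X/Y)$, of the compactly supported excisions of $M$", i.e.\ $M \simeq \varinjlim_{Z \in c(X/Y)} h_*^! M$, and that this plus the left Kan extension property formally yields colimit-preservation. Neither step holds in general: the natural map $\varinjlim_Z h_*^! M \to M$ has cofiber $\varinjlim_Z i_{Z,*}\mathbf{L}i_Z^* M$, which need not vanish (e.g.\ when $c(X/Y)$ has no terminal object the boundary behaviour of $M$ is not captured); and even if $\mathcal{C}_0$ generates $\mathcal{C}$ under colimits, a left Kan extension from $\mathcal{C}_0$ is not automatically colimit-preserving unless the objects of $\mathcal{C}_0$ are compact (the comma categories $\mathcal{C}_0/c$ do not behave well under colimits in $c$). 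The paper argues differently and more directly: Lemma~\ref{lem:Indsystems} shows that the Kan extension coend agrees with the colimit $\varinjlim_{Z\in c(X/Y)} f_* h_*^! M$ over the \emph{fixed} poset $c(X/Y)$ (independent of $M$), and then each term $f_* h_*^!$ is a colimit-preserving functor of $M$ because $h_*^!$ is (Proposition~\ref{prop:ishriekstarcolimpreserve}) and $f_* h_*^! \cong (fj)_*\mathbf{L}j^* h_*^!$ with $fj$ quasi-compact (Lemma~\ref{lem:Zprimeinclusion} and Proposition~\ref{prop:pushforward}). You should replace your generation argument with this direct one.

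Second, in part (ii) you correctly identify that the crux of the partially proper case is to produce, from the support datum over $Y$, a suitable element of $c(Y/Z)$ containing the image, and that this must use partial properness and paracompactness; you explicitly flag this as "the chief obstacle". But this is not a small coherence issue to defer: it is Lemma~\ref{lem:partiallyproperlongboring} in the paper, a substantial page-length construction that builds a wide-open $Z \subset Y$ and a quasi-compact-over-$Z$ strict neighbourhood by gluing Weierstrass-type estimates from the relative interior condition $X_{ij}\Subset_{Y_i}X'_{ij}$ across a finite type (paracompactness) cover, carefully verifying admissibility of the various complements. The paper then still needs the further step, in the Corollary itself, of constructing the quasi-compact-over-$Z$ admissible open $U_2$ containing $g(Z_1)$, which again uses the finite type cover. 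Your proposal contains the correct reduction and correctly anticipates what the lemma should say, but the actual mathematical content of the partially proper case lives entirely in this construction, which your sketch does not supply. The quasi-compact case of (ii) and the projection formula / restriction compatibility in (i) are in essential agreement with the paper's argument.
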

Though we have stated Theorems \ref{thm:intro1} and \ref{thm:intro2} for dagger analytic varieties, they hold just as well for rigid analytic varieties. The following Theorem, whose proof is the reason we need to use dagger geometry over the usual rigid geometry, shows that there is an intricate connection between the topological properties of admissible open subsets of a dagger analytic variety $X$, and the categorical properties of the pushforward functors along those inclusions of admissible open subsets.
\begin{thm}(c.f. \S\ref{subsubsec:domainembeddings}). 
Let $X$ be a dagger analytic variety, and let $k:V \to X$ be the inclusion of an admissible open subset such that $k$ is a quasi-compact morphism. Assume that $U := X \setminus V$ is also an admissible open subset such that $h:U \to X$ is exhaustible. Then for any $M \in \operatorname{QCoh}(U), N \in \operatorname{QCoh}(X)$, there is a canonical fiber-cofiber sequence
\begin{equation}
\begin{aligned}
 h_! h^* N \to N \to k_*k^*N,
\end{aligned}
\end{equation}
which we call \emph{excision}. Moreover, there is an adjunction $h_! \dashv h^*$, so that $h^! \simeq h^*$ in this case. 
\end{thm}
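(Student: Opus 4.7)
The plan is to treat the theorem as a recollement statement: the pair $(U, V)$ of complementary admissible opens, with $V$ quasi-compact, produces a decomposition of $\operatorname{QCoh}(X)$ whose structure is encoded in the two claimed fiber sequences. The essential inputs, which reduce by Theorem \ref{thm:intro1}(i) to the affinoid case, are two dagger-geometric orthogonality equivalences
\begin{equation*}
\mathbf{L}h^* h_* \simeq \operatorname{id} \qquad \text{and} \qquad \mathbf{L}h^* k_* \simeq 0.
\end{equation*}
For $X = \operatorname{Sp}(A)$ affinoid, $V = \operatorname{Sp}(A_V)$ a subdomain and $U = X \setminus V$, these respectively amount to flatness of $A \to A_U$ and the vanishing $A_U \widehat{\otimes}^{\mathbf{L}}_A A_V \simeq 0$; both reflect the flatness of dagger localizations, which fails in classical rigid geometry by Gabber's example.

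Given the orthogonalities, I would define $\tilde h_! M := \operatorname{fib}(h_* M \to k_* \mathbf{L}k^* h_* M)$ using the unit of $\mathbf{L}k^* \dashv k_*$. Direct computation, together with the formal equivalence $\mathbf{L}k^* k_* \simeq \operatorname{id}$, gives $\mathbf{L}h^* \tilde h_! \simeq \operatorname{id}$ and $\mathbf{L}k^* \tilde h_! \simeq 0$. To produce the adjunction $\tilde h_! \dashv \mathbf{L}h^*$ and the excision sequence in one stroke, I would show that for any $N \in \operatorname{QCoh}(X)$ the naturality square
\begin{equation*}
\begin{tikzcd}
N \ar[r] \ar[d] & k_* \mathbf{L}k^* N \ar[d] \\
h_* \mathbf{L}h^* N \ar[r] & k_* \mathbf{L}k^* h_* \mathbf{L}h^* N
\end{tikzcd}
\end{equation*}
is a pullback. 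This is verified by applying $\mathbf{L}h^*$ and $\mathbf{L}k^*$ (jointly conservative by descent from Theorem \ref{thm:intro1}(i)) and simplifying each face using the orthogonalities. The common horizontal fiber is then simultaneously $\tilde h_! \mathbf{L}h^* N$ (from the defining formula applied at $\mathbf{L}h^* N$) and $\operatorname{fib}(N \to k_* \mathbf{L}k^* N)$, yielding the excision sequence together with the counit of $\tilde h_! \dashv \mathbf{L}h^*$.

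To finish, I would identify $\tilde h_!$ with the $h_!$ of Theorem \ref{thm:intro2}. Both functors preserve colimits ($\tilde h_!$ as a left adjoint, $h_!$ as a left Kan extension), so it suffices to check $\tilde h_! M \simeq h_* M$ on compactly supported $M$, i.e.\ $\mathbf{L}k^* h_* M \simeq 0$ for such $M$. Unwinding the definition of $c(U/X)$, the support of $M$ lies in an admissible open covered by subdomains that are quasi-compact over $X$ and that can be arranged disjoint from a neighborhood of $V$; the vanishing then reduces to the affinoid orthogonality above. With $\tilde h_! \simeq h_!$ in hand, the first fiber sequence is simply the definition of $\tilde h_!$, and $h^! \simeq \mathbf{L}h^*$ follows by uniqueness of right adjoints.

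The principal obstacle is the orthogonality $\mathbf{L}h^* k_* \simeq 0$: the derived completed tensor product of the overconvergent algebras of complementary open subdomains must vanish. This is the substantive input from dagger geometry that is not available in the classical rigid setting, and it is what explains the footnote in the introduction that the proof of this theorem is ``the reason we need to use dagger geometry over the usual rigid geometry.''
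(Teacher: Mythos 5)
There is a genuine gap: the joint conservativity of $\mathbf{L}h^*$ and $\mathbf{L}k^*$ on $\operatorname{QCoh}(X)$ does \emph{not} follow from the sheaf property of $\operatorname{QCoh}$, because $\{U, V\}$ is \emph{not} an admissible covering of $X$. The subsets $U = X \setminus V$ and $V$ are disjoint admissible opens that cover $X$ set-theoretically, but two disjoint opens form an admissible cover only when $X$ decomposes as their disjoint union, which is false here whenever $X$ is connected (e.g.\ $X = \mathbb{D}^1_K$, $V$ a closed subdisk, $U$ the complementary wide-open annulus). The definition of a strict neighbourhood in the paper (Definition \ref{def:relcompact} and its footnote) says precisely that one must \emph{enlarge} $V$ to a strict neighbourhood $V'$ before $\{U, V'\}$ becomes admissible. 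Worse, the joint conservativity you invoke is essentially equivalent to the excision sequence you are trying to prove: if $\mathbf{L}h^*M = \mathbf{L}k^*M = 0$ then the putative fiber sequence $h_!\mathbf{L}h^*M \to M \to k_*\mathbf{L}k^*M$ collapses to $0 \to M \to 0$, so excision forces $M = 0$. Assuming joint conservativity at the start thus makes the argument circular.

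You have also misidentified where the dagger hypothesis enters. The vanishing $\mathbf{L}h^* k_* \simeq 0$ is the easy ingredient: it is an immediate consequence of the base change formula of Proposition \ref{prop:pushforward} for the quasi-compact morphism $k$, since $U \cap V = \emptyset$, and this is available in ordinary rigid geometry just as well. The genuinely dagger-specific step is the colimit identification $\varinjlim_{\varrho} k_{\varrho,*}\mathbf{L}k_\varrho^* \cong k_*\mathbf{L}k^*$ (Lemma \ref{lem:cofinalnbhd}), which expresses the pushforward from the affinoid $V$ as a filtered colimit of pushforwards from its quasi-compact strict neighbourhoods $V(\varrho)$, and whose proof uses that the Washnitzer algebra is a colimit of Tate algebras (equation \eqref{eq:washnitzercolimit}) and that this already computes the homotopy colimit. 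The paper's route to excision is to use this lemma to write $\operatorname{Fib}(M \to k_*\mathbf{L}k^*M)$ as a colimit of objects supported away from the various $V(\varrho)$, each of which is compactly supported relative to $X$ (since $X \setminus V(\varrho) \in c(U/X)$) and is therefore recovered from its restriction to $U$ by the sheaf property for the genuine admissible covering $\{U, V(\varrho)\}$. This is the step your outline skips, and it is not recoverable from the two orthogonalities you list.

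Two smaller points. First, $U = X \setminus V$ is not affinoid in general, so the reduction to ``flatness of $A \to A_U$'' does not literally make sense. Second, the identity $\mathbf{L}h^* h_* \simeq \operatorname{id}$ for a non-quasi-compact open immersion $h$ is not one of the base change statements the paper establishes directly; the paper sidesteps it by working with $h_!$ (for which $\mathbf{L}h^*h_! \simeq \operatorname{id}$ follows from \eqref{eq:basechange!2}), or with $h_*$ only on objects already supported away from a $V(\varrho)$, where the admissible covering $\{U, V(\varrho)\}$ and the sheaf property give the needed isomorphism.

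Your closing identification of $\tilde h_!$ with $h_!$ via colimit-preservation and reduction to compactly supported objects matches the strategy of Proposition \ref{prop:hlowershriek1}(i), and the idea of reading off the counit from the excision square is sound; but the central claim that the square is a pullback needs to be re-argued along the lines of Lemma \ref{lem:cofinalnbhd} rather than by an appeal to a conservativity that is not available.
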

In \S\ref{sec:grothdual} we attempt to understand the functor $f^!$ in certain situations, in other words, we investigate Grothendieck duality. We recall the definition of a \emph{partially-proper} morphism of dagger-analytic varieties from \cite[Definition 0.4.2]{HuberEtale}.
\begin{thm}\label{thm:introgrothdual}(c.f. \S\ref{sec:grothdual}).
\begin{enumerate}[(i)]
    \item Let $Y$ be a dagger analytic variety and let $f: \mathring{\mathbb{D}}^n_Y = \mathring{\mathbb{D}}^n_K \times Y \to Y$ be the projection from the open polydisk. Then $f^!$ commutes with restrictions and there is an equivalence $f^! \simeq f^*[n]$.
    \item Let $f:X \to Y$ be an exhaustible and partially-proper \'etale morphism of dagger-analytic varieties. Then $f^!$ commutes with restrictions and there is a canonical equivalence $f^! \xrightarrow[]{\sim} f^*$.
    \item Let $f: X \to Y$ be an exhaustible, smooth and partially proper morphism of dagger-analytic varieties. Then $f^!$ commutes with restrictions and there is a canonical equivalence $f^! \xrightarrow[]{\sim} f^!1_Y \widehat{\otimes}_X f^*$. 
\end{enumerate}
\end{thm}
We remark (c.f. Remark \ref{rmk:final}) that part (iii) of this Theorem includes the case of all smooth proper morphisms, and when $Y$ is affinoid includes the case of all smooth relative Stein spaces of \cite{vanderPutSerre}. 

It is not clear presently how Theorem \ref{thm:introgrothdual} is precisely related to classical Serre duality results for rigid analytic varieties, see for example \cite{ChiarellottoDuality,BeyerSerre,BeyerThesis,vanderPutSerre}. This would require an investigation of the ``Verdier isomorphism" $f^!1_Y \simeq \omega_{X/Y}[n]$, where $\omega_{X/Y} := \bigwedge^n\Omega^1_{X/Y}$ is the relative canonical bundle, and also a version of Kiehl's theorem \cite{kiehl_endlichkeitssatz_1967} on the coherence of proper direct images. We postpone this to future work.

\paragraph{Acknowledgements.} The author would like to thank Federico Bambozzi, Andreas Bode, Jack Kelly, Ken Lee, Kobi Kremnizer, and Finn Wiersig for their interest in this work, and for answering many questions. We would like to thank the authors of \cite{DAnG} for sharing their manuscript. We also thank Werner L\"utkebohmert for sharing the thesis of Peter Beyer. The author would like to thank his supervisor Konstantin Ardakov for his continuous support and interest in this project. The author would like to thank the anonymous referee for numerous detailed suggestions which greatly improved this manuscript. The author is currently a DPhil student in Oxford supported by an EPSRC scholarship. 
\section{The derived $\infty$-category of a quasi-abelian category}\label{sec:derivedquasiabelian}

We make extensive use of the theory of homotopical algebra in quasi-abelian categories as developed in \cite{kelly_homotopy_2021} and \cite{schneiders_quasi-abelian_1999}. We assume familiarity with the basics of higher algebra and model categories, as these topics are too vast to give a proper summary; we will give an indication of any non-standard or particular notions.

Let $\mathcal{A}$ be a quasi-abelian category. Recall \cite[\S 1]{schneiders_quasi-abelian_1999} that this means that $\mathcal{A}$ is an additive category which has all kernels and cokernels, and strict\footnote{Recall that a morphism $f$ is called strict if the natural morphism $\operatorname{coker} \operatorname{ker}f \xrightarrow[]{ }\operatorname{ker}\operatorname{coker}f$ is an isomorphism.} epimorphisms (resp. monomorphisms) are stable under pullbacks (resp. pushouts). We recall the following properties.
\begin{defn}\cite[Definition 2.47]{kelly_homotopy_2021}
An object $P \in \mathcal{A}$ is called \emph{projective} if the functor $\operatorname{Hom}(P,-): \mathcal{A} \to \mathsf{Ab}$ (valued in abelian groups), takes strict epimorphisms to surjections.
\end{defn}
\begin{defn}\cite[Definition 2.92]{kelly_homotopy_2021}
Assume that $\mathcal{A}$ admits small coproducts. A small subcategory $\mathcal{P}$ of objects in $\mathcal{A}$ is called \emph{generating} if for each object $M \in \mathcal{A}$ there exists a small collection $\{P_i\}_{i \in \mathcal{I}}$ of objects of $\mathcal{P}$ together with a strict epimorphism $\bigoplus_{i \in \mathcal{I}} P_i \twoheadrightarrow M$.
\end{defn}
\begin{defn}\cite[Definition 2.97]{kelly_homotopy_2021} Let $\mathcal{S}$ be a collection of morphisms in a quasi-abelian category $\mathcal{A}$ stable under composition.
\begin{enumerate}[(i)]
        \item Let $\mathcal{I}$ be a filtered category. An object $C \in \mathcal{A}$ is called \emph{$(\mathcal{I},\mathcal{S})$-tiny} if the functor $\operatorname{Hom}(C,-): \mathcal{A} \to \mathsf{Ab}$ commutes with colimits of diagrams in $\operatorname{Fun}_\mathcal{S}(\mathcal{I},\mathcal{A})$. Here $\operatorname{Fun}_\mathcal{S}(\mathcal{I},\mathcal{A}) \subseteq \operatorname{Fun}(\mathcal{I},\mathcal{A})$ denotes the sub-class of those functors which take morphisms in $\mathcal{I}$ into $\mathcal{S}$.
        \item An object $C \in \mathcal{A}$ is called \emph{$\mathcal{S}$-tiny} if the functor $\operatorname{Hom}(C,-): \mathcal{A} \to \mathsf{Ab}$ commutes with colimits of diagrams in $\operatorname{Fun}_\mathcal{S}(\mathcal{I},\mathcal{A})$, for any filtered category $\mathcal{I}$. 
    \item The category $\mathcal{A}$ is called \emph{$\mathcal{S}$-elementary} if $\mathcal{A}$ is is generated by a subcategory $\mathcal{P} \subseteq \mathcal{A}$ consisting of $\mathcal{S}$-tiny projective objects. 
\end{enumerate}
\end{defn}
In what follows we will often take $(\mathcal{I}, \mathcal{S}):= (\mathbb{N}, \mathrm{SplitMon})$, where $\mathrm{SplitMon}$ is the class of split monomorphisms in $\mathcal{A}$, or $\mathcal{S} := \mathrm{AdMon}$ to be the class of strict monomorphisms in $\mathcal{A}$, or $\mathcal{S}:= \mathrm{all}$. In each of these cases we say that $\mathcal{A}$ is \emph{$(\mathbb{N}, \mathrm{SplitMon})$-elementary}, \emph{$\mathrm{AdMon}$-elementary}, and \emph{elementary}, respectively. Of course, we have the following chain of implications:
\begin{equation*}
    \text{elementary} \implies \mathrm{AdMon}\text{-elementary} \implies (\mathbb{N}, \mathrm{SplitMon})\text{-elementary}.
\end{equation*}
The following Definition is standard.
\begin{defn}
\begin{enumerate}[(i)]
    \item Let $\kappa$ be a regular cardinal. 
    \begin{enumerate}
        \item An object $C \in \mathcal{A}$ is called \emph{$\kappa$-compact} if the functor $\operatorname{Hom}(C,-)$ commutes with $\kappa$-filtered colimits.
        \item The category $\mathcal{A}$ is called \emph{$\kappa$-compactly generated} if it has all $\kappa$-filtered colimits and it is generated under $\kappa$-filtered colimits by a small subcategory of $\kappa$-compact objects. 
    \end{enumerate}
    \item The category $\mathcal{A}$ is called \emph{locally presentable} if it is cocomplete and it is $\kappa$-compactly generated, for some regular cardinal $\kappa$.
\end{enumerate}
\end{defn}

We let $\operatorname{Ch}(\mathcal{A})$ denote the category of unbounded chain complexes valued in $\mathcal{A}$; we use cohomological indexing convention. Then Kelly \cite{kelly_homotopy_2021} has shown the following. We refer the reader to \cite[\S A.1.1]{HigherToposTheory} for the definition of simplicial model categories, and \cite[\S A.2.6]{HigherToposTheory} for combinatorial model categories.

\begin{thm}\cite[Theorem 4.65]{kelly_homotopy_2021}\label{thm:kelly}
Assume that $\mathcal{A}$ is a $(\mathbb{N},\mathrm{SplitMon})$-elementary quasi-abelian category. We write $\varphi:M \to N$ for a (chain) map in $\operatorname{Ch}(\mathcal{A})$ with components $\varphi_i$. Then there exists a stable, simplicial model structure (the \emph{projective model structure}) on $\operatorname{Ch}(\mathcal{A})$, with weak equivalences, fibrations, and cofibrations given as follows: 
\begin{itemize}
    \item[(W)] $\varphi$ is a weak equivalence if it is a quasi-isomorphism, i.e., $\operatorname{cone}(\varphi)$ is strictly exact. 
    \item[(F)] $\varphi$ is a fibration if the components $\varphi_i$ are strict epimorphisms. 
    \item[(C)] $\varphi$ is a cofibration if it has the left lifting property with respect to acyclic fibrations.\footnote{Of course, this part of the definition is redundant because in any model category, any one of the three classes is determined by the other two in this way.}  
\end{itemize}
Further, this model structure gives $\operatorname{Ch}(\mathcal{A})$ the structure of a combinatorial model category, if $\mathcal{A}$ is locally presentable.
\end{thm}
\begin{rmk}
\begin{enumerate}[(i)]
    \item With this model structure, every object of $\operatorname{Ch}(\mathcal{A})$ is fibrant.    \item By the familiar dictionary between model categories, this Theorem implies that the underlying $\infty$-category $N(\operatorname{Ch}(\mathcal{A}))[W^{-1}]$ is \emph{stable}, and additionally \emph{presentable} whenever $\mathcal{A}$ is locally presentable.
\end{enumerate}

\end{rmk}
Now suppose that the category $\mathcal{A}$ is endowed with a closed symmetric monoidal structure $(\mathcal{A},\otimes,\underline{\operatorname{Hom}})$.
\begin{defn}\cite[Definition 2.70]{kelly_homotopy_2021}
An object $F \in \mathcal{A}$ is called \emph{flat} if the endofunctor $F \otimes -$ preserves the kernels of strict morphisms. 
\end{defn}
\begin{defn}\cite[Definition 2.71]{kelly_homotopy_2021}
The category $\mathcal{A}$ is called \emph{projectively monoidal} if:
\begin{enumerate}[(i)]
    \item $\mathcal{A}$ has enough projectives;
    \item All projective objects of $\mathcal{A}$ are flat;
    \item The tensor product of any two projective objects is again projective. 
\end{enumerate}
\end{defn}
We will make use of the following Theorem due to Kelly \cite{kelly_homotopy_2021}. 
\begin{thm}\cite[Theorem 4.69]{kelly_homotopy_2021}\label{thm:monoidalmodel}
Assume that $\mathcal{A}$ is projectively monoidal and elementary. Then $\mathrm{Ch}(\mathcal{A})$, endowed with the above projective model structure, is a monoidal model category satisfying the monoid axiom. 
\end{thm}
\begin{rmk}
We do not define \emph{monoidal model categories} or explain the \emph{monoid axiom} here and instead refer the reader to \cite[\S A.3.1]{HigherToposTheory}. For our purposes, it is enough to note that Theorem \ref{thm:monoidalmodel} implies that, whenever $\mathcal{A}$ is projectively monoidal and elementary, the underlying $\infty$-category $N(\operatorname{Ch}(\mathcal{A}))[W^{-1}]$ is a \emph{presentably symmetric monoidal $\infty$-category}. This means that it is \emph{presentable} and the tensor product commutes with colimits separately in each variable. 
\end{rmk}
In the remainder of this section, we will assume that $\mathcal{A}$ is projectively monoidal, elementary, and locally presentable quasi-abelian category, so that the results of Theorems \ref{thm:kelly} and \ref{thm:monoidalmodel} apply. In particular, \cite[Theorem 4.5.3.1]{HigherAlgebra}, (see also \cite[Remark 4.5.3.2]{HigherAlgebra}), implies that there exists a functor
\begin{equation}
\mathsf{CAlg}(N(\operatorname{Ch}(\mathcal{A}))[W^{-1}]) \to \mathsf{CAlg}(\mathsf{Cat}_\infty),
\end{equation}
which in fact factors through $\mathsf{CAlg}(\mathsf{Pr}^L_{\mathsf{st}}) \to \mathsf{CAlg}(\mathsf{Cat}_\infty)$. The result of \cite[Theorem A.7(ii)]{TopologicalCyclicHomology} implies that the localization functor $N(\operatorname{Ch}(\mathcal{A})) \to N(\operatorname{Ch}(\mathcal{A}))[W^{-1}]$ is lax-symmetric monoidal, and hence induces a functor on commutative-algebra objects. Using the inclusion $\mathcal{A} \subseteq \operatorname{Ch}(\mathcal{A})$ in degree $0$, we obtain by composition a functor
\begin{equation}
     N(\mathsf{CAlg}(\mathcal{A})) \simeq  \mathsf{CAlg}(N(\mathcal{A})) \to \mathsf{CAlg}(N(\operatorname{Ch}(\mathcal{A}))[W^{-1}]) \to \mathsf{CAlg}(\mathsf{Pr}^L_{\mathsf{st}}), 
\end{equation}
where the first equivalence is due to \cite[Example 2.1.3.3]{HigherAlgebra}. If we define $\mathsf{Aff}(\mathcal{A})$ to be the (nerve of) $\mathsf{CAlg}(\mathcal{A})^\mathsf{op}$, we have now defined a prestack
\begin{equation}
    \operatorname{QCoh}: \mathsf{Aff}(\mathcal{A})^\mathsf{op} \to \mathsf{CAlg}(\mathsf{Pr}^L_{\mathsf{st}}).
\end{equation}
which sends $\operatorname{Sp}(A) \in \mathsf{Aff}(\mathcal{A})$ (corresponding to $A \in \mathsf{CAlg}(\mathcal{A})$), to the presentably symmetric monoidal $\infty$-category 
\begin{equation}
    \operatorname{QCoh}(\operatorname{Sp}(A)) = \operatorname{Mod}_A(N(\operatorname{Ch}(\mathcal{A}))[W^{-1}]),
\end{equation}
where on the right-side, $A$ is viewed as a monoid via the \emph{derived} tensor product. A morphism $f: X = \operatorname{Sp}(A) \to \operatorname{Sp}(B) = Y$ in $\mathsf{Aff}(\mathcal{A}) := \mathsf{CAlg}(\mathcal{A})^\mathsf{op}$, corresponding to a nonzero morphism $f^\#: B \to A $ in $\mathsf{CAlg}(\mathcal{A})$, is sent to the (derived) pullback functor $f^* \simeq A \otimes^\mathbf{L}_B -$, which admits a right adjoint $f_*$. By construction , the $(-)^*$ functors are compatible with composition\footnote{In the $(\infty,1)$-sense, so that there are higher homotopies witnessing $(fg)^* \simeq g^*f^*$ for composable $f,g$.}, and by naturality of \emph{passing to adjoints} the same is true for the $(-)_*$ functors\footnote{See \cite[\S5.5.3]{HigherToposTheory}; the anti-equivalence $\mathsf{Pr}^L \simeq (\mathsf{Pr}^R)^\mathsf{op}$ is obtained by \emph{passing to adjoints}.}. 
\begin{prop}\label{prop:homotopymono}
For a morphism $f: X = \operatorname{Sp}(A) \to \operatorname{Sp}(B) = Y$ in $\mathsf{Aff}(\mathcal{A})$, the following are equivalent:
\begin{enumerate}[(i)]
    \item The counit of the adjunction $f^* f_* \xrightarrow[]{} \operatorname{id}$ is a natural isomorphism.
    \item The functor $f_* : \operatorname{QCoh}(\operatorname{Sp}(A)) \to \operatorname{QCoh}(\operatorname{Sp}(B))$ is fully faithful.
    \item The natural map $A \otimes^\mathbf{L}_B A \to A$ is an isomorphism in $\operatorname{QCoh}(\operatorname{Sp}(A))$. 
\end{enumerate}
\end{prop}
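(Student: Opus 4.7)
The equivalence (i) $\Leftrightarrow$ (ii) is the general fact that a right adjoint in an $\infty$-category is fully faithful if and only if its counit is an equivalence, which I would simply cite from \cite{HigherToposTheory}.

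For (i) $\Rightarrow$ (iii): since $f_*A$ is just $A$ regarded as a $B$-module via $f^\#$, we have $\mathbf{L}f^*f_*A = A \otimes^{\mathbf{L}}_B A$, and the counit at $A \in \operatorname{QCoh}(\operatorname{Sp}(A))$ is precisely the multiplication map in (iii); so this direction is immediate by restricting the hypothesis in (i) to $A$.

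The content is (iii) $\Rightarrow$ (i). My plan is to consider the full subcategory
\[
\mathcal{C} := \{\, M \in \operatorname{QCoh}(\operatorname{Sp}(A)) \mid \mathbf{L}f^*f_*M \to M \text{ is an equivalence}\,\},
\]
observe that $A \in \mathcal{C}$ by (iii), and show $\mathcal{C}$ is closed under small colimits; then $\mathcal{C} = \operatorname{QCoh}(\operatorname{Sp}(A))$ will follow from the fact that $A$ (the tensor unit) generates $\operatorname{QCoh}(\operatorname{Sp}(A))$ under colimits, a standard feature of the derived $\infty$-category of modules over a monoid in a presentable stable $\infty$-category, where every object is a colimit of shifts of $A$.

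To get closure of $\mathcal{C}$ under colimits I need both $\mathbf{L}f^*f_*$ and $\operatorname{id}$ to preserve colimits. The identity is trivial and $\mathbf{L}f^*$ is a left adjoint, so the key point is that $f_*$ preserves colimits. In the closed symmetric monoidal setting of \S\ref{sec:derivedquasiabelian} this holds already at the quasi-abelian level because $f_*: \mathsf{Mod}(A) \to \mathsf{Mod}(B)$ admits a further right adjoint, the coinduction $\underline{\operatorname{Hom}}_B(A, -)$, and is therefore itself a left adjoint. The main obstacle is to check carefully that this colimit-preservation descends through Kelly's localization (Theorem \ref{thm:kelly}) to the $\infty$-categorical level; this should be routine since $f_*$ preserves weak equivalences and fits into a Quillen pair with its right adjoint on the opposite side, but alternatively one can simply appeal to the general $\infty$-categorical principle that restriction of scalars along a map of algebras in a presentably symmetric monoidal stable $\infty$-category always preserves colimits.
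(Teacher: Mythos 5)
Your handling of (i)~$\Leftrightarrow$~(ii) and (i)~$\Rightarrow$~(iii) is fine, and your observation that $f_*$ preserves colimits (because restriction of scalars has a further right adjoint $\underline{\operatorname{Hom}}_B(A,-)$) is correct and is used later in the paper (Lemma~\ref{lem:commuteswithcolimits}). However, the proposed proof of (iii)~$\Rightarrow$~(i) has a genuine gap: it rests on the claim that $A$ generates $\operatorname{QCoh}(\operatorname{Sp}(A))$ under colimits and shifts, and this is \emph{false} in the quasi-abelian setting of this paper. In the ordinary derived category of a ring one does have this (every object is a filtered colimit of perfect complexes, which are finite cell complexes built from the ring), but here $\mathsf{Mod}(A)$ is a category of complete bornological modules, whose projective generators are retracts of objects of the form $\bigoplus_i A\,\widehat{\otimes}_K\, c_0(X_i)$ (compare Lemma~\ref{lem:projectivesCBornK}); the objects $c_0(X)$ for infinite $X$ are not colimits of copies of $K$, so these projectives are not in the colimit closure of $A$. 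Concretely, already for $A = K$ the category $\operatorname{QCoh}(\operatorname{Sp}(K))$ is the derived $\infty$-category of $\mathsf{CBorn}_K$, which is far larger than the subcategory generated under colimits and shifts by $K$.

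The fix is to argue directly and functorially, with no need for a generator. For $M \in \operatorname{QCoh}(\operatorname{Sp}(A))$ one has, by associativity of the derived tensor product and the identification $M \simeq A \otimes^{\mathbf{L}}_A M$,
\begin{equation*}
\mathbf{L}f^* f_* M \;\simeq\; A \otimes^{\mathbf{L}}_B M \;\simeq\; \bigl(A \otimes^{\mathbf{L}}_B A\bigr) \otimes^{\mathbf{L}}_A M \;\xrightarrow{\;\sim\;}\; A \otimes^{\mathbf{L}}_A M \;\simeq\; M,
\end{equation*}
where the middle arrow is an equivalence by hypothesis (iii), and one checks this composite is the counit. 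This proves (i) for all $M$ at once. This is, I believe, the argument behind the reference the paper gives (\cite[\S4.3]{BBKNonArch}); the paper itself omits the proof. If you nonetheless want to run your colimit argument, you would have to check the counit on the actual projective generators $A\,\widehat{\otimes}_K\, c_0(X)$ and their shifts rather than on $A$ alone, but that reduces to the same associativity computation, so the generator detour gains nothing.
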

\begin{proof}
Omitted, see for instance \cite[\S4.3]{BBKNonArch}. 
\end{proof}
\begin{defn}\cite[\S1.2.6]{toen_homotopical_2008}
If any of the equivalent conditions of Proposition \ref{prop:homotopymono} are satisfied, $f: X \to Y$ is called a \emph{homotopy monomorphism} (and $f^\#: B \to A$ shall be called a \emph{homotopy epimorphism}).  
\end{defn}
\begin{defn}\label{defn:homotopytransversal}
A \emph{homotopy Zariski transversal pair} $(\mathsf{A},\tau)$ is a full subcategory $\mathsf{A} \subset \mathsf{Aff}(\mathcal{A})$, and a collection of homotopy monomorphisms $\tau$ in $\mathsf{A}$, such that:
\begin{enumerate}[(i)]
    \item $\tau$ contains all isomorphisms, is stable under composition, and $\tau$ is stable under base change by arbitrary morphisms in $\mathsf{A}$.
    \item $\mathsf{A}$ is stable under fiber products and finite coproducts in $\mathsf{Aff}(\mathcal{A})$, and for any morphism $u : \operatorname{Sp}(C) \to \operatorname{Sp}(B)$ in $\mathsf{A}$, and any homotopy monomorphism $t: \operatorname{Sp}(A) \to \operatorname{Sp}(B)$ in $\tau$, the natural morphism $C \otimes^\mathbf{L}_B A \to C \otimes_B A$ is an equivalence.
\end{enumerate}
\end{defn}
The last part of Definition \ref{defn:homotopytransversal} motivates the following definition. 
\begin{defn}\label{defn:transversalmorphism}
Let $(\mathsf{A},\tau)$ be a homotopy Zariski transversal pair. A morphism $t: \operatorname{Sp}(A) \to \operatorname{Sp}(B)$ in  $\mathsf{A}$ is called \emph{transversal} if, for any morphism $u : \operatorname{Sp}(C) \to \operatorname{Sp}(B)$ in $\mathsf{A}$, the natural morphism $C \otimes^\mathbf{L}_B A \to C \otimes_B A$ is an equivalence.
\end{defn}
If $(\mathsf{A},\tau)$ is a homotopy Zariski transversal pair, and $t, u$ are as in Definition \ref{defn:transversalmorphism}, then one has a natural isomorphism of functors
\begin{equation}\label{eq:basechange}
t^* u_* \xrightarrow[]{\sim} u^\prime_* t^{\prime,*}.
\end{equation}
where $t^\prime, u^\prime$ are given as in the Cartesian and homotopy Cartesian square
\begin{equation}
\begin{tikzcd}
	{\operatorname{Sp}(C \otimes_B A)} & {\operatorname{Sp}(A)} \\
	{\operatorname{Sp}(C)} & {\operatorname{Sp}(B)}
	\arrow["u^\prime",from=1-1, to=1-2]
	\arrow["t^\prime", from=1-1, to=2-1]
	\arrow["u"', from=2-1, to=2-2]
	\arrow["t", from=1-2, to=2-2]
\end{tikzcd}
\end{equation}
Indeed, this is nothing but the equivalence
\begin{equation}
    A \otimes_B^\mathbf{L} M \xrightarrow[]{\sim} (C \otimes_B^\mathbf{L} A)  \otimes_C^\mathbf{L} M \xrightarrow[]{\sim}  (C \otimes_B A)  \otimes_C^\mathbf{L} M
\end{equation}
for $M \in \operatorname{QCoh}(\operatorname{Sp}(C))$. 
In the rest of this section we shall fix a homotopy Zariski transversal pair $(\mathsf{A}, \tau)$.
\begin{defn}\label{def:covers} \cite[Prop 3.20]{BBKNonArch}
We call a finite collection of morphisms 
\begin{equation}
\{f_i : U_i = \operatorname{Sp}(A_i) \to \operatorname{Sp}(A) = X \}_{i \in \mathcal{I}}   
\end{equation}in $\mathsf{A}$ a \emph{cover} if:
\begin{equation}\label{eq:covercondition}
\text{For any }M \in \operatorname{QCoh}(\operatorname{Sp}(A)),\text{ whenever }f_i^*M \simeq 0\text{ for all }i \in \mathcal{I},\text{ then }M \simeq 0. 
\end{equation}
\end{defn}
\begin{prop}
The covers given in Definition \ref{def:covers} define a Grothendieck (pre)topology on $\mathsf{A}$, in other words:
\begin{enumerate}[(i)]
    \item Isomorphisms are covers, 
    \item If $\{U_i \to X\}_{i}$ and $\{U_{ij} \to U_i\}_{j}$ are covers then $\{U_{ij} \to X\}_{i,j}$ is a cover, 
    \item Covers are stable under base change by arbitrary morphisms in $\mathsf{A}$. 
\end{enumerate}
\end{prop}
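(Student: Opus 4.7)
The plan is to verify each of the three axioms \textit{(i)}--\textit{(iii)} in turn, using only the three closure properties of $\tau$ in Definition \ref{defn:homotopytransversal}, the composition isomorphism \eqref{eq:mathbbLcompose1}, the base change isomorphism \eqref{eq:basechange}, and one auxiliary observation: for every morphism $g: \operatorname{Sp}(C) \to \operatorname{Sp}(A)$ in $\mathsf{A}$, the pushforward $g_*: \operatorname{QCoh}(\operatorname{Sp}(C)) \to \operatorname{QCoh}(\operatorname{Sp}(A))$ is conservative. Concretely, $g_*$ is the underlying forgetful functor on chain complexes, and strict exactness of a complex $N \in Ch(\mathsf{Mod}(C))$ is a property of the kernels and cokernels of its differentials computed in $\mathcal{A}$, which is unchanged upon restricting scalars to $A$; hence $N \simeq 0$ in $\operatorname{QCoh}(\operatorname{Sp}(C))$ if and only if $g_* N \simeq 0$ in $\operatorname{QCoh}(\operatorname{Sp}(A))$.

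For \textit{(i)}, an isomorphism $f$ lies in $\tau$ and induces an equivalence $\mathbf{L}f^*$, so the singleton family is a cover with $\mathcal{I}_0 = \mathcal{I}$. For \textit{(ii)}, given covers $\{f_i: U_i \to X\}_{i \in \mathcal{I}}$ with finite subset $\mathcal{I}_0$ and $\{f_{ij}: U_{ij} \to U_i\}_{j \in \mathcal{J}_i}$ with finite subsets $\mathcal{J}_{i,0}$, I would form the composite family $\{f_i \circ f_{ij}: U_{ij} \to X\}_{i,j}$ and take the finite subset $\{(i,j) : i \in \mathcal{I}_0, \ j \in \mathcal{J}_{i,0}\}$. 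Each composite lies in $\tau$ by stability under composition. If $M \in \operatorname{QCoh}(X)$ satisfies $\mathbf{L}(f_i f_{ij})^* M = 0$ for all $(i,j)$ in this finite subset, then \eqref{eq:mathbbLcompose1} rewrites this as $\mathbf{L}f_{ij}^*(\mathbf{L}f_i^* M) = 0$; the cover condition for $\{f_{ij}\}_j$ applied to $\mathbf{L}f_i^* M$ then yields $\mathbf{L}f_i^* M = 0$ for each $i \in \mathcal{I}_0$, whence $M = 0$ by the cover condition for $\{f_i\}_i$.

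For \textit{(iii)}, given a cover $\{f_i: U_i \to X\}_{i \in \mathcal{I}}$ with finite $\mathcal{I}_0$ and a morphism $g: Y \to X$ in $\mathsf{A}$, the base-changed family $\{f_i': U_i \times_X Y \to Y\}_{i \in \mathcal{I}}$ lies in $\tau$ by base-change stability, and I keep the same finite subset $\mathcal{I}_0$. Writing $g_i: U_i \times_X Y \to U_i$ for the other projection in the Cartesian square, the base change isomorphism \eqref{eq:basechange} yields $\mathbf{L}f_i^* g_* \cong g_{i,*} \mathbf{L}f_i'^*$. Given $N \in \operatorname{QCoh}(Y)$ with $\mathbf{L}f_i'^* N = 0$ for all $i \in \mathcal{I}_0$, this forces $\mathbf{L}f_i^* g_* N \cong g_{i,*} 0 = 0$ for $i \in \mathcal{I}_0$, so the cover condition for $\{f_i\}_i$ gives $g_* N = 0$, and the auxiliary conservativity then gives $N = 0$.

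The only step I anticipate requiring any genuine argument is the conservativity of $g_*$, as it is the only input that exploits the specific structure of Kelly's model category rather than the formal properties of the adjunctions $\mathbf{L}f^* \dashv f_*$; everything else is direct bookkeeping with the closure properties of $\tau$ and the two naturality isomorphisms \eqref{eq:mathbbLcompose1} and \eqref{eq:basechange}.
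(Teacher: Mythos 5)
Your proposal is correct and follows the same structure the paper indicates (which simply points to \eqref{eq:mathbbLcompose1} for (ii) and \eqref{eq:basechange} for (iii), deferring details to \cite{BBKNonArch}). You correctly identify that the base-change isomorphism alone does not close the argument for (iii): one needs the conservativity of $g_*$, and your justification---that $g_*$ is modelled by restriction of scalars, which preserves and reflects strict exactness since that is a property of the underlying complex in $\mathcal{A}$, combined with the fact that an exact functor of stable $\infty$-categories reflecting zero objects is conservative---is exactly the missing ingredient the paper leaves implicit.
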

\begin{proof}
This was already shown in \cite{BBKNonArch}. The assertion (i) is clear. Verifying the condition \eqref{eq:covercondition} in (ii) follows from the fact that pullbacks compose. Verifying \eqref{eq:covercondition} in (iii) follows from the base change isomorphism \eqref{eq:basechange}.
\end{proof}
\begin{defn}
The category $\mathsf{A}$ with the covers given in \ref{def:covers} shall be called the \emph{$\tau$-homotopy Zariski site} and denoted $\mathsf{A}_{Zar}^\tau$. 
\end{defn}
Let $\{t_i: U_i \to X\}_{i \in \mathcal{I}}$ be a covering family in $\mathsf{A}_{Zar}^\tau$. By the property of limits in $\infty$-categories, there is a morphism
\begin{equation}
    \phi: \operatorname{QCoh}(X) \to \varprojlim_{I \subseteq \mathcal{I}} \operatorname{QCoh}(U_I) 
\end{equation}
where $I$ runs over the poset of finite nonempty subsets $I = \{i_1,\dots, i_n\} \subseteq \mathcal{I}$, and $U_I := U_{i_1} \times_X \dots \times_X U_{i_n}$.\footnote{We implicitly chose some ordering on $\mathcal{I}$ and always view $I \subseteq \mathcal{I}$ with the induced ordering.} On objects, this sends $M \mapsto (t_I^*M)_I$ together with the isomorphisms induced by $u^* t^*_I = (t_I u)^* \simeq t_J^*$ for every morphism $u: U_J \to U_I$ induced by $J \supseteq I$. It is formal that $\phi$ has a right adjoint given by $(M_I)_I \mapsto \varprojlim_{I \subseteq \mathcal{I}} t_{I,*} M_I$. 
\begin{prop}\label{prop:descentqcoh}
With notations as above. The natural morphism 
\begin{equation}\label{eq:descentqcoh}
    \phi: \operatorname{QCoh}(X) \to \varprojlim_{I \subseteq \mathcal{I}} \operatorname{QCoh}(U_I) 
\end{equation}
is an equivalence. 
\end{prop}
\begin{proof}
This is the same as \cite[Proposition 10.5]{CondensedMathematics}. For the reader's convenience we reproduce the argument here. To verify fully faithfulness we check that the unit map
\begin{equation}\label{eq:counitcovering}
    M \mapsto \operatorname{lim}_{I \subseteq \mathcal{I}} t_{I,*}t_I^*(M)
\end{equation}
is an isomorphism. Since we are working with stable $\infty$-categories then $t_{U_i}^*$ commutes with finite limits. By the condition \eqref{eq:covercondition}, it suffices to check that \eqref{eq:counitcovering} is an isomorphism after applying $t^*_{U_i}$.  By base-change, we will then be in the same situation, except with $X = U_i$, and one of the covering maps is $\operatorname{id}:X \to X$. But then it is obvious that \eqref{eq:counitcovering} is an isomorphism.

For essential surjectivity, say that $(M_I)_I$ belongs to the right side of \eqref{eq:descentqcoh} and set $M := \operatorname{lim}_{I \subseteq \mathcal{I}} t_{I,*}M_I$, we must check that the counit morphism $t_J^*(M) \to M_J$ is an isomorphism for each nonempty subset $J \subseteq \mathcal{I}$. Again $t_J^*$ commutes with this finite limit, and then $\operatorname{lim}_{I 
 \subseteq \mathcal{I}}t_J^*t_{I,*}M_I \xrightarrow[]{\sim} M_J$, by base-change. 
\end{proof}
Continuing in the above setup, with $\{t_{U_i} : U_i \to X\}_{i \in \mathcal{I}}$ a covering family in $\mathsf{A}^\tau_{Zar}$, we let $\mathfrak{U}_\bullet \to X$ denote the \v{C}ech nerve of $\bigsqcup_{i \in \mathcal{I}} U_i \to X$. In an entirely similar way to the above, there is a morphism
\begin{equation}
    \psi: \operatorname{QCoh}(X) \to \varprojlim_{[m] \in \Delta} \operatorname{QCoh}(\mathfrak{U}_m),
\end{equation}
which, on objects sends $M \mapsto (t_m^*M)_{[m] \in \Delta}$ together with the isomorphisms induced by $u^*t^*_m = (t_mu)^* \simeq t_n^*$ for every simplicial morphism $\mathfrak{U}_n \to \mathfrak{U}_m$. Again, it is formal that $\psi$ has a right adjoint given on objects by $(M_m)_{[m] \in \Delta} \mapsto \varprojlim_{[m] \in \Delta} t_{m,*} M_m$.
\begin{prop}\label{prop:cechdescent}
With notations as above. The natural morphism
\begin{equation}\label{eq:cechdescent}
    \psi: \operatorname{QCoh}(X) \to \varprojlim_{[m] \in \Delta} \operatorname{QCoh}(\mathfrak{U}_m)
\end{equation}
is an equivalence. 
\end{prop}
\begin{proof}
By using Mathew's theory of descendable algebras, this follows from Proposition \ref{prop:descentqcoh} above. Let us use notations as in Proposition \ref{prop:descentqcoh}, and suppose that $X = \operatorname{Sp}(A)$ and $U_i = \operatorname{Sp}(A_i)$, so that $U_I = \operatorname{Sp}(A_I)$ where $A_I = A_{i_1} \otimes_A \dots \otimes_A A_{i_n}$ for $I = \{i_1,\dots,i_n\}$. Looking at the unit object in the equivalence \eqref{eq:descentqcoh}, we see that there are equivalences
\begin{equation}
    A \simeq \varprojlim_{I \subseteq \mathcal{I}} A_I \simeq \varprojlim_{I = (i_1,\dots,i_n) \subseteq \mathcal{I}} A_{i_1} \otimes^\mathbf{L}_A \dots \otimes^\mathbf{L}_A A_{i_n}, 
\end{equation} 
where for the second equivalence we used Definition \ref{defn:homotopytransversal}(ii). This implies that $A \in \operatorname{QCoh}(X)$ belongs to the \emph{thick tensor-ideal} of $(\operatorname{QCoh}(X), \otimes^\mathbf{L}_A)$ generated by $\prod_{i \in \mathcal{I}} A_i$, c.f. \cite[Definition 3.17]{MathewGalois}, and therefore $A \to \prod_{i \in \mathcal{I}} A_i := B$ is \emph{descendable} \cite[Definition 3.18]{MathewGalois}.
Therefore, \cite[Proposition 3.22]{MathewGalois} guarantees that
\begin{equation}
    \operatorname{Mod}_A(N(\operatorname{Ch}(\mathcal{A}))[W^{-1}]) \to \varprojlim_{[m] \in \Delta} \operatorname{Mod}_{B^{\otimes_A^\mathbf{L} (m+1)}}(N(\operatorname{Ch}(\mathcal{A}))[W^{-1}]) 
\end{equation}
is an equivalence. However, we note by Definition \ref{defn:homotopytransversal}(ii) again that $B^{\otimes_A^\mathbf{L} (m+1)} \simeq B^{\otimes_A (m+1)}$, so that \eqref{eq:cechdescent} is indeed an equivalence. 
\end{proof}
\begin{cor}\label{cor:QCohsheaf}
$\operatorname{QCoh}: \mathsf{A}^\mathsf{op} \to \mathsf{CAlg}(\mathsf{Pr}^L_\mathsf{st})$ is a sheaf on the site $\mathsf{A}^\tau_{Zar}$.
\end{cor}
\begin{proof}
Noting that the Grothendieck topology is finitary, this follows from Proposition \ref{prop:cechdescent} above, using \cite[Proposition A.3.3.1]{SpectralAlgebraicGeometry}. 
\end{proof}

From now on, for $X = \operatorname{Sp}(A) \in \mathsf{A}$ we will write the monoidal structure on $\operatorname{QCoh}(X)$ as $\otimes_X = \otimes^\mathbf{L}_A$; that is, we suppress the $\mathbf{L}$ when the subscript is a ``space".

Let $f: X = \operatorname{Sp}(A) \to \operatorname{Sp}(B) =Y $ be a morphism in $\mathsf{A}$. Since $f^*$ is symmetric monoidal and left adjoint to $f_*$, we obtain a morphism of bifunctors $f^*(f_* \otimes_Y \operatorname{id}) \cong f^*f_* \otimes_X f^* \to \operatorname{id} \otimes_X f^*$ which induces by adjunction a morphism
\begin{equation}\label{eq:projectionaffinoid}
f_* \otimes_Y \operatorname{id} \to f_*(\operatorname{id} \otimes_X f^*).
\end{equation}
\begin{lem}\label{lem:projectionaffinoid}
The morphism \eqref{eq:projectionaffinoid} is an isomorphism; in other words $f$ satisfies the projection formula.
\end{lem}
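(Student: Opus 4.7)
The plan is to reduce the derived projection formula to the tautological underived identity $M \otimes_A (A \otimes_B N) \cong M \otimes_B N$ (which holds because $f^* = A \otimes_B -$), by working with cofibrant resolutions in the monoidal model structure of Theorem \ref{thm:kelly}.

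First I would fix cofibrant replacements $M^c \xrightarrow{\sim} M$ in $Ch(\mathsf{Mod}(A))$ and $N^c \xrightarrow{\sim} N$ in $Ch(\mathsf{Mod}(B))$. The three model-categorical inputs are: (a) since $(f^*, f_*)$ is a Quillen adjunction, $f^* N^c = A \otimes_B N^c$ is cofibrant in $Ch(\mathsf{Mod}(A))$ and therefore represents $\mathbf{L}f^* N$; (b) $f_*$ preserves all weak equivalences (noted in the text before Proposition \ref{prop:homotopymono}); (c) by the monoidal model structure, tensoring with a cofibrant object preserves weak equivalences between cofibrant objects (Ken Brown's lemma applied to the Quillen bifunctor $\otimes$).

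Next, both sides become computable on strict representatives. The target $f_*(M \otimes^\mathbf{L}_A \mathbf{L}f^*N)$ is represented by the underlying $B$-complex of $M^c \otimes_A (A \otimes_B N^c)$, and the strict $A$-module isomorphism $M^c \otimes_A (A \otimes_B N^c) \cong M^c \otimes_B N^c$ identifies this with $M^c \otimes_B N^c$ viewed as a $B$-complex. The source $f_* M \otimes^\mathbf{L}_B N$ is represented, after choosing a cofibrant replacement $P \xrightarrow{\sim} f_* M$ in $Ch(\mathsf{Mod}(B))$, by $P \otimes_B N^c$. Comparing through the zigzag $P \xrightarrow{\sim} f_* M \xleftarrow{\sim} f_* M^c$ and tensoring with $N^c$ (which is homotopical in this setting, after a routine functorial-cofibrant-replacement argument) gives a weak equivalence $P \otimes_B N^c \simeq f_* M^c \otimes_B N^c = M^c \otimes_B N^c$ of $B$-complexes. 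Thus both sides admit the common representative $M^c \otimes_B N^c$.

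The main obstacle — and the only real work — is the final bookkeeping: verifying that the \emph{abstract} projection morphism, built via the symmetric monoidality of $\mathbf{L}f^*$ and the counit $\mathbf{L}f^* f_* \to \operatorname{id}$, coincides under these resolutions with the canonical identity on $M^c \otimes_B N^c$. This is a diagram chase: on cofibrant representatives the monoidality isomorphism is the strict associativity $A \otimes_B (f_* M^c \otimes_B N^c) \cong (A \otimes_B f_* M^c) \otimes_A (A \otimes_B N^c)$, and the counit is the $A$-action map $A \otimes_B M^c \to M^c$; tracing through the adjunction, the composite reduces to the identity of $M^c \otimes_B N^c$. This chase is organised cleanly using Appendix \ref{sec:appendixDK} to pass between the Quillen-adjoint data and the underlying $\infty$-categorical adjunction, or, more conceptually, by appealing to the formalism of \cite[Appendix A]{TopologicalCyclicHomology}, which guarantees that strictly $B$-linear 1-categorical identities derive along weakly monoidal Quillen pairs.
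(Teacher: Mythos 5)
Your proof takes essentially the same approach as the paper's: reduce the derived projection formula to the strict associativity isomorphism $M \otimes_B N_c \cong M \otimes_A (A \otimes_B N_c)$ by passing to cofibrant resolutions, using that $f_*$ is already homotopical and that $f^*$ is left Quillen. The paper's version is slightly leaner (it only cofibrantly replaces $N$, avoiding your zigzag through $P \to f_*M \leftarrow f_*M^c$), but both arguments rest on the same implicit fact — that tensoring with a cofibrant complex preserves all weak equivalences, not merely those between cofibrant objects; your "routine functorial-cofibrant-replacement argument" is circular as stated and what is really being invoked is the K-flatness of cofibrant complexes in the projective model structure, a point the paper also leaves implicit.
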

\begin{proof}
By the pointwise criterion for natural equivalence (which we may use implicitly in the rest of this article), it suffices to show that, for all $M \in \operatorname{QCoh}(X)$, $N \in \operatorname{QCoh}(Y)$, the natural morphism $M \otimes^\mathbf{L}_B N \to M \otimes^\mathbf{L}_A f^* N$ is an equivalence. This amounts to the associativity property $M \otimes^\mathbf{L}_B N \xrightarrow{\sim} M \otimes^\mathbf{L}_A A \otimes^\mathbf{L}_B N$ of $\otimes^\mathbf{L}$.
\end{proof}
\begin{lem}\label{lem:rightadjoint}
\begin{enumerate}[(i)]
    \item Let $X \in \mathsf{A}$. For fixed $M \in \operatorname{QCoh}(X)$, the functor $M \otimes_X-$ admits a right adjoint. 
    \item Let $f: X = \operatorname{Sp}(A) \to \operatorname{Sp}(B) = Y$ be a morphism in $\mathsf{A}$. The functor $f_*$ admits a right adjoint.
\end{enumerate}
\end{lem}
\begin{proof}
(i): This is immediate since $\operatorname{QCoh}(X)$ is \emph{presentably} symmetric monoidal.
(ii): Since all categories involved are presentable, it is sufficient (c.f. \cite[Corollary 5.5.2.9]{HigherToposTheory}) to show that $f_*$ commutes with colimits. We note that $f_*$ is the forgetful functor in the monadic adjunction $f^* \dashv f_*$. In any monadic adjunction, the forgetful functor preserves all colimits which the monad preserves \cite[Corollary 4.2.3.5]{HigherAlgebra}. In this case the monad is $f_*f^* = A \otimes_B^\mathbf{L} -$, which preserves colimits.
\end{proof}
For a morphism $f: X = \operatorname{Sp}(A) \to \operatorname{Sp}(B) = Y$ in $\mathsf{A}$ we will write $f^!$ for the right adjoint to $f_*$. It can be computed on objects as $f^! \simeq R \underline{\operatorname{Hom}}_B(A,-)$. 
\section{Quasicoherent sheaves}\label{sec:quasicoherent}
Let $K$ be a complete field extension of $\mathbb{Q}_p$. Let $(\mathsf{IndBan}_K, \widehat{\otimes}_K, \underline{\operatorname{Hom}}_K)$ be the $\operatorname{Ind}$-category of $K$-Banach spaces.
\begin{prop}
$(\mathsf{IndBan}_K, \widehat{\otimes}_K, \underline{\operatorname{Hom}}_K)$ is a projectively monoidal, elementary and locally presentable quasi-abelian category.
\end{prop}
\begin{proof}
See \cite[Proposition 2.1.16, Proposition 2.1.19]{schneiders_quasi-abelian_1999}. In order to guarantee that $\mathsf{IndBan}_K$ is locally presentable we may fix a sufficiently large cutoff cardinal $\kappa$ and only consider $\kappa$-small Banach spaces. 
\end{proof}
Therefore, the formalism of \S\ref{sec:derivedquasiabelian} applies with $\mathcal{A} = \mathsf{IndBan}_K$. As our model for $p$-adic analytic geometry we will use the dagger analytic spaces of \cite{GrosseKlonneRigid}, which we also direct the reader to. They are locally G-ringed spaces modelled on dagger algebras, which are defined as quotients of the Washnitzer algebra $W_n:= K \langle T_1,\dots,T_n\rangle^\dagger$ of overconvergent power series, i.e., those with radius of convergence $>1$, rather than the Tate algebra $T_n := K \langle T_1,\dots,T_n\rangle$ as in Tate's rigid geometry. The Washnitzer algebra may be presented as an $\operatorname{Ind}$-Banach space via the formula $K \langle T_1,\dots,T_n\rangle^\dagger \cong \varinjlim_{\varrho > 1} K \langle T_1/\varrho,\dots,T_n/\varrho\rangle$. In this way all dagger algebras acquires the canonical structure of an $\operatorname{Ind}$-Banach space, independently of the choice of presentation. 

Many of the usual good properties of rigid spaces carry over to the dagger setting, including Tate acyclicity. There is \cite[Theorem 2.19]{GrosseKlonneRigid} a ``completion" functor from the category of dagger analytic spaces, to the category of Tate's rigid analytic spaces, which identifies the underlying G-topological spaces, and the local rings of the structure sheaves. For us, an admissible open subset of a dagger analytic variety, will always mean an admissible open subset in the strong G-topology. As in rigid geometry, we are constrained to convergence radii from $\sqrt{|K^\times|} := |K^\times| \otimes_{\mathbb{Z}} \mathbb{Q}$. We consider the category of dagger affinoid spaces $\mathsf{Afnd}^\dagger$ as a full subcategory of $\mathsf{Aff}(\mathsf{IndBan}_K)$.

Bambozzi and Ben-Bassat \cite[Theorem 5.7]{BambozziDaggerBanach} have proven the following; note that theirs is stated in terms of Berkovich spectra, but the same argument works in our setting, where $\operatorname{Sp}(\cdot)$ shall always denote the maximal ideal spectrum.

\begin{thm}\cite[Theorem 5.7]{BambozziDaggerBanach}\label{lem:affinoidhomotop}
Let $A \to B$ be any nonzero morphism of dagger affinoid $K$-algebras, $U = \operatorname{Sp}(B_U) \hookrightarrow \operatorname{Sp}(B)$ any affinoid subdomain, so that $B \to B_U$ is a dagger affinoid localisation. Then the map
\begin{equation}
    A \widehat{\otimes}^\mathbf{L}_B B_U \to A \widehat{\otimes}_B B_U,
\end{equation}
is an isomorphism in $\operatorname{QCoh}(\operatorname{Sp}(A))$, in particular $U \to \operatorname{Sp}(B)$ is a homotopy monomorphism.
\end{thm}

Let $\tau$ be the collection of all affinoid subdomain inclusions in $\mathsf{Afnd}^\dagger$. Then $\tau$ contains all isomorphisms, is stable under composition, and stable under base change by arbitrary morphisms in $\mathsf{Afnd}^\dagger$. Thus Theorem \ref{lem:affinoidhomotop} gives:
\begin{cor}\label{cor:transversalpair}
$(\mathsf{Afnd}^\dagger, \tau)$ is a homotopy Zariski transversal pair.
\end{cor}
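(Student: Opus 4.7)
The plan is to verify the two bullets of Definition \ref{defn:homotopytransversal} for $\mathsf{A} = \mathsf{Afnd}^\dagger$ and $\tau$ the collection of affinoid subdomain inclusions. The stability properties required by the first bullet (containment of isomorphisms, closure under composition, and stability under arbitrary base change in $\mathsf{Afnd}^\dagger$) are already asserted in the paragraph preceding the Corollary, and these rely only on standard properties of dagger affinoid subdomains that parallel the Tate case and are available from \cite{GrosseKlonneRigid} (through the completion functor identifying the G-topologies). I would dispose of these first, essentially as a citation.

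For the second bullet I first need that $\mathsf{Afnd}^\dagger$ has fibre products. I would invoke the standard fact that the completed bornological tensor product $C \widehat{\otimes}_B A$ of dagger affinoids over a dagger affinoid base is again dagger affinoid, so fibre products are given by $\operatorname{Sp}(A) \times_{\operatorname{Sp}(B)} \operatorname{Sp}(C) = \operatorname{Sp}(C \widehat{\otimes}_B A)$. The transversality condition — that $C \widehat{\otimes}_B^\mathbf{L} A \xrightarrow{\sim} C \widehat{\otimes}_B A$ for any $u: \operatorname{Sp}(C) \to \operatorname{Sp}(B)$ and any $t : \operatorname{Sp}(A) \hookrightarrow \operatorname{Sp}(B)$ in $\tau$ — is literally the content of Lemma \ref{lem:affinoidhomotop}, so this step is immediate.

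The remaining obligation hidden in the definition is that each $t \in \tau$ is itself a homotopy monomorphism. I would verify this via Proposition \ref{prop:homotopymono}(iii): taking $C = A$ with $u = t$ in the transversality just established reduces the needed isomorphism $A \widehat{\otimes}_B^\mathbf{L} A \xrightarrow{\sim} A$ to the underived statement $A \widehat{\otimes}_B A \xrightarrow{\sim} A$, which is the classical assertion that an affinoid subdomain localisation $B \to A$ is an epimorphism of dagger affinoid algebras in $\mathsf{Comm}(\mathsf{CBorn}_K)$ (obtainable from \cite{GrosseKlonneRigid}, or by passing to completions and using Tate's theorem). The main obstacle is purely organisational: all substantive analytic content has been packaged into Lemma \ref{lem:affinoidhomotop}, so the Corollary reduces to assembling this with the above standard facts, and I expect no genuine technical difficulty.
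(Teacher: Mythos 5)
Your proposal is correct and follows essentially the same route as the paper: the first bullet of Definition \ref{defn:homotopytransversal} is dispatched by standard properties of dagger affinoid subdomains, and the second bullet (transversality plus the existence of fibre products) is exactly the content of Lemma \ref{lem:affinoidhomotop}. The one detail you spell out that the paper leaves implicit is the verification, via Proposition \ref{prop:homotopymono}(iii) together with the classical fact that $A \widehat{\otimes}_B A \cong A$, that each $t \in \tau$ is a homotopy monomorphism --- which in the paper is carried by the ``in particular'' clause of Lemma \ref{lem:affinoidhomotop}.
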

\begin{prop}\label{prop:conservative1}
Let $\{t_{U_i} : U_i = \operatorname{Sp}(A_{i}) \to \operatorname{Sp}(A) = X\}_{i = 1}^n$ be an admissible cover by of $\operatorname{Sp}(A) \in \mathsf{Afnd}^\dagger$ by affinoid subdomains, and let $M \in \operatorname{QCoh}(\operatorname{Sp}(A))$. If $t_{U_i}^*M \simeq 0$ for all $i = 1, \dots, n$, then $M \simeq 0$.
\end{prop}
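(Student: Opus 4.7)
The plan is to deduce the conservativity from Tate acyclicity for dagger affinoids. Specifically, Tate acyclicity for the cover $\{U_i\}_{i=1}^n$ gives strict exactness in $\mathsf{CBorn}_K$ of the augmented \v{C}ech complex
\begin{equation*}
    0 \to A \to \prod_i A_i \to \prod_{i,j} A_i \widehat{\otimes}_A A_j \to \cdots
\end{equation*}
and by Lemma \ref{lem:affinoidhomotop} each completed tensor product appearing here coincides with its derived version. Translating through the quasi-isomorphism criterion in the model structure of Theorem \ref{thm:kelly}, this promotes to an equivalence
\begin{equation*}
    A \xrightarrow{\sim} \varprojlim_{[m] \in \Delta} \prod_{(i_0, \ldots, i_m)} A_{i_0} \widehat{\otimes}^\mathbf{L}_A \cdots \widehat{\otimes}^\mathbf{L}_A A_{i_m}
\end{equation*}
in $\operatorname{QCoh}(\operatorname{Sp}(A))$.

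Next I would tensor both sides with $M$ over $A$. Because $n$ is finite, the cosimplicial \v{C}ech object has nondegenerate cosimplices only in a bounded range of degrees, so its totalization is equivalent to a finite limit. Since $\operatorname{QCoh}(\operatorname{Sp}(A))$ is stable (Lemma \ref{lem:stablepresentable}) and $M \widehat{\otimes}^\mathbf{L}_A -$ is a left adjoint, it is exact and therefore commutes with this finite limit. Applying the projection formula (Lemma \ref{lem:projectionaffinoid}) termwise and using the composition of pullbacks \eqref{eq:mathbbLcompose1}, the identity rewrites as
\begin{equation*}
    M \xrightarrow{\sim} \varprojlim_{[m] \in \Delta} \prod_{(i_0, \ldots, i_m)} t_{i_0 \cdots i_m, *} \mathbf{L}t_{i_0 \cdots i_m}^* M,
\end{equation*}
where $t_{i_0 \cdots i_m}: U_{i_0} \cap \cdots \cap U_{i_m} \to X$ denotes the inclusion of the intersection.

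The last step is immediate: every $t_{i_0 \cdots i_m}$ factors as $U_{i_0 \cdots i_m} \to U_{i_0} \xrightarrow{t_{U_{i_0}}} X$, so \eqref{eq:mathbbLcompose1} forces $\mathbf{L}t_{i_0 \cdots i_m}^* M = 0$ once $\mathbf{L}t_{U_{i_0}}^* M = 0$. Under the hypothesis of the proposition every entry of the \v{C}ech limit vanishes, and hence $M = 0$.

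The principal obstacle is the first step, namely reconciling classical Tate acyclicity (a statement about strict exactness of a cochain complex in $\mathsf{CBorn}_K$) with the limit statement in the stable $\infty$-category $\operatorname{QCoh}(\operatorname{Sp}(A))$; Lemma \ref{lem:affinoidhomotop} is essential for swapping underived for derived tensor products, and one must also be careful that the totalization of the bounded cosimplicial object can genuinely be computed as a finite limit, so that the left adjoint $M \widehat{\otimes}^\mathbf{L}_A -$ preserves it. The remainder is essentially a replay of the fully-faithfulness half of the descent argument in Proposition \ref{prop:QCohdescent}.
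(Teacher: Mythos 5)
Your proof is correct but takes a genuinely different route from the paper's. The paper argues by reduction: it first establishes the claim for $n=2$ Laurent covers $\{X(1/a), X(a/1)\}$, exhibiting an explicit bounded section of the \v{C}ech differential $d$ to split the sequence and then realising $M_c \to 0$ as a retract of a weak equivalence in the model category; it then passes to standard covers by induction on $n$, and finally to arbitrary admissible affinoid covers via Gerritzen--Grauert and \cite[Lemma 4.2.4]{FresnelVanDerPut}. You instead invoke Tate acyclicity for the given cover wholesale, convert the resulting strictly exact \v{C}ech complex into a descent equivalence $A \xrightarrow{\sim} \varprojlim_{[m]\in\Delta}\prod A_{i_0}\widehat{\otimes}^{\mathbf{L}}_A\cdots\widehat{\otimes}^{\mathbf{L}}_A A_{i_m}$ by iterating Lemma~\ref{lem:affinoidhomotop} and using that the bounded cosimplicial object has a finite totalization, then tensor with $M$ and read off that every term of the resulting finite limit vanishes. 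Your route is shorter and dovetails cleanly with the formal descent machinery of \S\ref{sec:derivedquasiabelian} (as you note, it essentially replays the fully-faithfulness half of Proposition~\ref{prop:QCohdescent}). What the paper's longer reduction buys in exchange is a smaller input: it only needs the $n=2$ Laurent case, where strictness of the sequence and boundedness of the section are exhibited by direct computation, rather than relying on strict exactness of the \v{C}ech complex in the bornological/quasi-abelian sense for an arbitrary admissible affinoid cover. Both arguments are sound.
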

\begin{proof}
Assume first that $n=2$ and $U_1 = X(1/a), U_2 = X(a/1)$ for some $a \in A$. By Tate acyclicity for dagger algebras, the alternating \v{C}ech complex associated to this covering yields a strictly exact sequence in $\mathsf{IndBan}_K$:
\begin{equation}
    0 \to A \xrightarrow[]{i} A\langle a/1 \rangle^\dagger \oplus A \langle 1/a \rangle^\dagger \xrightarrow[]{d} A \langle a/1, 1/a \rangle^\dagger \to 0.
\end{equation}
We now note that the map $d$ admits a bounded section $s$. More precisely, the map $s$ extends by continuity the map $A[X,Y] \to A \langle X \rangle^\dagger /(X-a) \oplus A \langle Y \rangle^\dagger/(1-aY)$ given by $s(X^iY^j) = X^{i-j}$ for $i \ge j$ and $s(X^iY^j) = Y^{j-i}$ for $i < j$. 
As the sequence is strictly exact, $i$ is the (categorical) kernel of $d$, and so  $i$ admits a retraction $r$ in $\mathsf{IndBan}_K$, c.f. \cite[Remark 7.4]{buhler_exact_2010}. Now let $M \in \operatorname{QCoh}(\operatorname{Sp}(A))$. By this discussion, the canonical morphism $M \to (A\langle a/1 \rangle^\dagger \oplus A \langle 1/a \rangle^\dagger) \widehat{\otimes}^\mathbf{L}_A M$ admits a retraction in $\operatorname{QCoh}(\operatorname{Sp}(K))$. Hence, if $t_{U_1}^*M \simeq 0$ and $t_{U_2}^*M \simeq 0$ then $M \simeq 0$ in $\operatorname{QCoh}(\operatorname{Sp}(K))$. But, the forgetful functor to $\operatorname{QCoh}(\operatorname{Sp}(K))$ is conservative, so $M \simeq 0$, proving the Proposition in this case.

In a similar way to \cite[Theorem 4.2.2]{FresnelVanDerPut}, we now deduce the claim when $U_1,\dots,U_n$ is a standard covering of $X$, i.e., $U_i = X(f_1 \cdots \hat{f}_i \cdots f_n / f_i)$ for some $f_1, \dots, f_n \in A$ generating the unit ideal; in this case we write $\{U_1,\dots,U_n\} = St(f_1,\dots,f_n)$. The case $n=1$ is trivial. Assume that we have proved the claim for standard covers of size $n$ and consider $St(f_1,\dots,f_{n+1})$. Note that, for any $ 1 \le j \le n+1$, we have: 
\begin{equation}\label{eq:starcondition}
\begin{aligned}[c]
\text{If }|f_{j}(x)| \ge |f_i(x)|\text{ for all } x \in X \text{ and all } 1 \le i \le n,\text{ then }\\ \{U_1,\dots,\hat{U}_j,\dots,U_n\} = St(f_1,\dots,\hat{f_j},\dots, f_n),
\end{aligned}
\end{equation} 
and in that case we would be done by inductive hypothesis. As $f_1,\dots,f_{n+1}$ generate the unit ideal, we can find $\varrho \in K^\times$ such that $|\varrho| < \inf_{x \in X} (\max_{i=1,\dots,n+1} |f_i(x)|)$. Consider 
\begin{equation}
\begin{aligned}
X_1 := \{ x \in X : |f_{n+1}(x) / \varrho | \le 1 \} && \text{ and } && X_2 := \{ x \in X : |f_{n+1}(x) / \varrho | \ge 1 \},
\end{aligned}
\end{equation}
and let $u_1: X_1 \to X$, $u_2:X_2 \to X$ be the inclusions. By \eqref{eq:starcondition}, $U_1 \cap X_1, \dots, U_n\cap X_1$ is a standard covering of $X_1$ of size $n$, hence by inductive hypothesis $u_1^* M \simeq 0$; by the case $n=2$ above, it suffices to show that $u_2^*M  \simeq 0$, i.e., we may assume to begin with that $|f_{n+1}(x)/\varrho| \ge 1$ for all $x \in X$. Continuing this process, we see that we may assume $|f_i(x)/\varrho| \ge 1$ for all $i = 1,\dots, n+1$, to begin with. In this case $f_1/f_2 \in A$, and we set 
\begin{equation}
\begin{aligned}
X_1 = \{ x \in X : |(f_1/f_2)(x)| \le 1 \} && \text{ and } && X_2 = \{ x \in X : |(f_2/f_1)(x)| \le 1 \}, 
\end{aligned}
\end{equation}
and by \eqref{eq:starcondition}, we see that $\{U_2 \cap X_1, \dots,U_{n+1} \cap X_{1} \} = St(f_2,\dots,f_{n+1})$, and $\{U_1 \cap X_2, U_3 \cap X_2, \dots,U_{n+1} \cap X_{2} \} = St(f_1,f_3,\dots,f_{n+1})$. Hence by inductive hypothesis, $u_1^*M \simeq 0 $ and $u_2^*M \simeq 0$, and we are done by the $n=2$ step.  
For a general affinoid covering, we can use the Gerritzen-Grauert theorem to refine to a finite rational covering, and then \cite[Lemma 4.2.4]{FresnelVanDerPut} to refine to a finite standard covering, whence the claim follows from the above.
\end{proof}
Let $\mathsf{Afnd}^\dagger_w$ denote the ``big" weak topology on $\mathsf{Afnd}^\dagger$, i.e., $\{U_i \to X\}_{i \in \mathcal{I}}$ is a cover if it is an admissible cover by affinoid subdomains.  By Corollary \ref{cor:transversalpair}, Corollary \ref{cor:QCohsheaf}, and Proposition \ref{prop:conservative1}, we obtain the following. 
\begin{thm}\label{thm:Cechdescentweak}
$\operatorname{QCoh}$ is a $\mathsf{CAlg}(\mathsf{Pr}_{\mathsf{st}}^L)$-valued sheaf on $\mathsf{Afnd}^\dagger_w$. 
\end{thm}
Let $\mathsf{Rig}^\dagger_s$ be the big site of all dagger analytic varieties, equipped with the strong topology. Since $\mathsf{Afnd}^\dagger_w$ is a basis for $\mathsf{Rig}^\dagger_s$,  \cite[Proposition A.3.11(ii)]{mann_p-adic_2022} implies that the right Kan extension of functors along $\mathsf{Afnd}^\dagger \to \mathsf{Rig}^\dagger$ extends $\operatorname{QCoh}$ to a $\mathsf{CAlg}(\mathsf{Pr}_{st}^L)$-valued sheaf on $\mathsf{Rig}^\dagger_s$\footnote{To avoid set-theoretic issues we restrict to dagger analytic varieties with affinoid coverings of cardinality less than some strongly inaccessible limit cardinal $\kappa$.}, given objectwise by
\begin{equation}\label{eq:QCohOtimes}
\begin{aligned}
\operatorname{QCoh}(X) := \lim_{(\operatorname{Sp}(A) \to X)} \operatorname{QCoh}(\operatorname{Sp}(A)),
\end{aligned}
\end{equation}
where the limit runs over the full subcategory of $\mathsf{Rig}^\dagger_{/X}$ spanned by affinoid dagger spaces mapping to $X$.

Since $\operatorname{QCoh}$ is a $\mathsf{CAlg}(\mathsf{Pr}^L_{\mathsf{st}})$-valued sheaf on $\mathsf{Rig}^\dagger_s$, every map $f: X \to Y$ of dagger analytic varieties induces a symmetric monoidal pullback functor $f^* : \operatorname{QCoh}(Y) \to \operatorname{QCoh}(X)$. As this is a morphism in $\mathsf{Pr}^L_{\mathsf{st}}$, by the property of presentable $\infty$-categories \cite[Corollary 5.5.2.9]{HigherToposTheory} it has an (abstractly defined) right adjoint 
\begin{equation}
f^* : \operatorname{QCoh}(Y) \leftrightarrows \operatorname{QCoh}(X) : f_*,  
\end{equation}
which we call pushforwards. We write $\widehat{\otimes}_X$ for the monoidal structure of $\operatorname{QCoh}(X)$. By \emph{passing to adjoints}, the formation of $(-)_*$ is compatible with composition. Hence, for any Cartesian square 
\begin{equation}
\begin{tikzcd}
	X^\prime & Y^\prime \\
	X & Y
	\arrow["f"', from=2-1, to=2-2]
	\arrow["{f^\prime}", from=1-1, to=1-2]
	\arrow["{t^\prime}"', from=1-1, to=2-1]
	\arrow["t", from=1-2, to=2-2]
	\arrow["\lrcorner"{anchor=center, pos=0.125}, draw=none, from=1-1, to=2-2]
\end{tikzcd}
\end{equation}
we obtain a base change morphism
\begin{equation}
    t^* f_* \xrightarrow[]{} f_*^\prime t^{\prime, *},
\end{equation}
which, to be precise, is the morphism obtained by adjunction from the composite
\begin{equation}
    f_* \to f_* t^\prime_* t^{\prime,*} \to t_* f^\prime_* t^{\prime,*}, 
\end{equation}
where the first morphism is obtained from the unit of $t^{\prime,*} \dashv t^\prime_*$. Since $f^*$ is symmetric monoidal and left adjoint to $f_*$ we obtain the composite
\begin{equation}
   f^*(f_*\widehat{\otimes}_Y \operatorname{id}) \xrightarrow[]{\sim} f^*f_*\widehat{\otimes}_X f^* \to \operatorname{id} \widehat{\otimes}_X f^*,
\end{equation}
where the second morphism is obtained from the counit of $f^* \dashv f_*$, which by adjunction gives a morphism
\begin{equation}
    f_* \widehat{\otimes}_Y \operatorname{id} \to f_*(\operatorname{id} \widehat{\otimes}_X f^*).
\end{equation}
By functoriality of $f_*$, for any (small) diagram $(M_i)_{i \in \mathcal{I}}$ in $\operatorname{QCoh}(X)$ one has a natural morphism 
\begin{equation}\label{eq:colimI}
    \varinjlim_{i \in \mathcal{I}} f_*M_i \to f_* \varinjlim_{i \in \mathcal{I}}  M_i,
\end{equation}
therefore, the statement of Proposition \ref{prop:pushforward} below makes sense.
\begin{prop}\label{prop:pushforward}
Suppose that $f: X \to Y$ is a qcqs morphism of dagger analytic varieties. Then:
\begin{enumerate}[(i)]
    \item \cite[Lemma 2.4.16]{mann_p-adic_2022} The functor $f_*$ commutes with restrictions, i.e., if $t: W \hookrightarrow Y$ is an admissible open subset, in the induced Cartesian diagram
\begin{equation}
\begin{tikzcd}
	W^\prime & W \\
	X & Y
	\arrow["{f^\prime}", from=1-1, to=1-2]
	\arrow["{t^\prime}"', hook, from=1-1, to=2-1]
	\arrow["f"', from=2-1, to=2-2]
	\arrow["t", hook, from=1-2, to=2-2]
	\arrow["\lrcorner"{anchor=center, pos=0.125}, draw=none, from=1-1, to=2-2]
\end{tikzcd}
\end{equation}
the induced base change morphism 
\begin{equation}\label{eq:inftycatbasechange}
t^* f_*    \rightarrow f^\prime_* t^{\prime,*} 
\end{equation} 
is an equivalence.
\item $f_*$ admits a right adjoint (which we denote by $f^!$).
\item $f_*$ satisfies the projection formula: the natural morphism
\begin{equation}\label{eq:projectionformula}
    f_* \widehat{\otimes}_Y \operatorname{id} \to f_*(\operatorname{id} \widehat{\otimes}_X f^*)
\end{equation}
is an equivalence.
\end{enumerate}
\end{prop}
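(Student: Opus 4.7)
The plan is to reduce each of the three assertions to its affinoid-to-affinoid counterpart from Section \ref{sec:derivedquasiabelian} by means of a finite \v{C}ech argument enabled by the quasi-compactness of $f$.

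Since $\operatorname{QCoh}$ is a sheaf on $\mathsf{Rig}^\dagger_s$ and restriction to an affinoid cover is jointly conservative (Proposition \ref{prop:conservative1}), I can test each isomorphism after restriction to affinoid subdomains of the target. For \eqref{eq:inftycatbasechange} this reduces us to $Y = \operatorname{Sp}(B)$ and $W = \operatorname{Sp}(B_W)$ both affinoid with $t$ an affinoid subdomain inclusion, via a further restriction on $W$ using \eqref{eq:mathbbLcompose1} to compose pullbacks. For colimit preservation and the projection formula, restricting to an affinoid cover of $Y$, together with the base change statement (once proved), the monoidality of $\mathbf{L}t^*$, and colimit preservation of $\mathbf{L}t'^*$ (which is a left adjoint in $\mathsf{Pr}^L_{\mathsf{st}}$), reduces the claims to $Y$ affinoid. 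So I would establish the base change assertion first, and then deduce the other two.

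With $Y = \operatorname{Sp}(B)$ affinoid and $f$ quasi-compact, I would choose a finite admissible cover $\{U_k = \operatorname{Sp}(A_k)\}_{k=1}^N$ of $X$ by affinoid subdomains. Since $X$ is separated, all finite intersections are again affinoid, so the \v{C}ech nerve $\mathfrak{U}_\bullet$ takes values in $\mathsf{Afnd}^\dagger$. Proposition \ref{prop:QCohdescent} gives $\operatorname{QCoh}(X) \cong \lim_{[n]\in\Delta} \operatorname{QCoh}(\mathfrak{U}_n)$, and under this identification $\mathbf{L}f^*$ is the diagonal map assembled from the affinoid pullbacks $\mathbf{L}f_n^*: \operatorname{QCoh}(Y) \to \operatorname{QCoh}(\mathfrak{U}_n)$. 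Passing to right adjoints termwise and using that mapping spaces in a limit are the limit of mapping spaces, one arrives at the explicit formula
\begin{equation*}
    f_* M \;\cong\; \lim_{[n] \in \Delta} f_{n,*}\bigl(M|_{\mathfrak{U}_n}\bigr).
\end{equation*}
Exactly as in the proof of Proposition \ref{prop:QCohdescent}, the finiteness of the cover means $\mathfrak{U}_\bullet$ has only finitely many nondegenerate simplices, so this $\Delta$-limit behaves as a finite limit in the stable $\infty$-category $\operatorname{QCoh}(Y)$.

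From this presentation each claim follows routinely. For base change, the affinoid version \eqref{eq:basechange} gives termwise natural isomorphisms $\mathbf{L}t^* \circ f_{n,*} \cong f'_{n,*} \circ \mathbf{L}t'^*_n$, and $\mathbf{L}t^*$, being exact between stable $\infty$-categories, commutes with the finite limit. For colimit preservation, each $f_{n,*}$ commutes with colimits by Lemma \ref{lem:commuteswithcolimits}, and in a stable $\infty$-category finite limits commute with arbitrary colimits (being finite colimits up to shift). For the projection formula, the functor $- \widehat{\otimes}^\mathbf{L}_Y Q$ is colimit preserving and hence likewise commutes with this finite limit; substituting the formula for $f_*$ and applying Lemma \ref{lem:projectionaffinoid} termwise produces the desired isomorphism.

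The principal technical hinge is the observation that the $\Delta$-limit describing $f_*$ behaves as a finite limit, which is what lets both the exact functor $\mathbf{L}t^*$ and the colimit-preserving functor $- \widehat{\otimes}^\mathbf{L} Q$ pass through it. This rests on the finiteness of the affinoid cover of $X$ together with the stability of $\operatorname{QCoh}$; once this is granted, the rest of the proof is essentially bookkeeping assembling the affinoid-level facts from Section \ref{sec:derivedquasiabelian}.
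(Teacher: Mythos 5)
Your finite \v{C}ech-nerve argument for the case that $Y$ is affinoid is essentially identical to the paper's ``Stage 2'', and the deductions for colimit preservation and the projection formula from the base change statement match the paper too. The genuine gap is in the opening reduction, which as written is circular.

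You claim that conservativity of restriction to an affinoid cover ``reduces us to $Y = \operatorname{Sp}(B)$ and $W = \operatorname{Sp}(B_W)$ both affinoid''. The natural transformation $\mathbf{L}t^* f_* \to f'_*\mathbf{L}t'^*$ takes values in $\operatorname{QCoh}(W)$, so the conservative family you can actually apply consists of restrictions $\mathbf{L}s^*$ along an affinoid cover $\{W' \to W\}$. Composing pullbacks via \eqref{eq:mathbbLcompose1} indeed replaces $W$ by affinoid $W'$, but $Y$ is untouched: on the right-hand side you are left with $\mathbf{L}s^* f'_*\mathbf{L}t'^*$, and rewriting $\mathbf{L}s^* f'_*$ as a pushforward over $W'$ is itself an instance of the base change you are trying to prove. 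There is no conservative family of functors out of $\operatorname{QCoh}(W)$ that replaces $Y$ by an affinoid subdomain of $Y$ for free. Moreover, when $Y$ is not quasi-compact neither is $X$ (quasi-compactness of $f$ only controls preimages of quasi-compacts), so the finite affinoid cover of $X$ on which your whole computation rests is only available once $Y$ is already affinoid.

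The paper resolves this with an extra step that your proposal omits: after proving the statement for $Y$ affinoid and $t$ an affinoid subdomain inclusion (your Stage 2), it takes a (possibly infinite) admissible affinoid cover $\{Y_j\}$ of a general $Y$, uses the affinoid case to show that the system $(f_{n,*}\mathbf{L}t_n'^*)_{[n] \in \Delta}$ attached to the nerve of $\{Y_j\}$ is compatible under localizations, assembles $f_*$ as $\varprojlim_{[n]} t_{n,*} f_{n,*}\mathbf{L}t_n'^*$, and then extracts $\mathbf{L}t_n^* f_* \cong f_{n,*}\mathbf{L}t_n'^*$ by \v{C}ech descent on $Y$; only then, with another conservativity-over-the-$Y_j$ argument, is the base change for an arbitrary affinoid $W \hookrightarrow Y$ established, and finally for general $W$. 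You need to supply some version of this argument: the claim that one ``reduces to $Y$ affinoid'' for the base change statement is exactly the nontrivial point.
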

\begin{proof}(i): This is essentially the same as \cite[Lemma 2.4.16]{mann_p-adic_2022}; we reproduce the argument here for the reader's convenience. 

First assume that $X$ and $Y$ are both dagger affinoid varieties. One chooses a cover $\{W_i \to W\}_{i \in \mathscr{I}}$ of $W$ by affinoid subdomains of $Y$. Let $\mathcal{I}$ be the family of finite nonempty subsets of $\mathscr{I}$ and for each $I \in \mathcal{I}$ set $W_I := \bigcap_{i \in I} W_i$ and $W_I^\prime := f^{-1}(W_I)$. Let $t_I: W_I \to Y$ and $t_I^\prime: W_I^\prime \to X$ be the inclusions and let $f_I^\prime: W_I^\prime \to W_I$. Each $W_I^\prime$ and $W_I$ is affinoid. Hence, by Proposition \ref{prop:descentqcoh} and \eqref{eq:basechange}, one has $ t^*f_* \simeq \varprojlim_{I \in \mathcal{I}} t_I^* f_* \simeq \varprojlim_{I \in \mathcal{I}} f_{I,*}^\prime t_I^{\prime,*} \simeq f^\prime_* t^{\prime,*}$, proving the Lemma in this case.

Next we assume that $Y$ is affinoid and $X$ is an admissible open subspace of a dagger affinoid variety $Z$. Let $\{U_i \to X\}_{i \in \mathscr{I}}$ be a (finite, since $f$ is qcqs) covering of $X$ by affinoid subdomains of $Z$ and let $\mathcal{I}$ be the family of all finite nonempty subsets $I \subseteq \mathscr{I}$. For $I \in \mathcal{I}$ we again set $U_I := \bigcap_{i \in I}U_i$. According to Proposition \ref{prop:descentqcoh}, then $f_* \xrightarrow[]{\sim} \varprojlim_{I \in \mathcal{I}}f_{I,*}$ where $f_I : U_I \to X$ is the restriction. Since $t^*$ commutes with finite limits, we deduces base-change in this case from the preceding.

Now assume that $Y$ is an admissible open subset of a dagger affinoid $Z$ and $X$ is arbitary. One chooses a cover $\{V_i \to Y\}_{i \in \mathscr{I}}$ of  $Y$ by affinoid subdomains of $Z$. Let $\mathcal{I}$ be the family of finite nonempty subsets of $\mathscr{I}$ and for each $I \in \mathcal{I}$ set $V_I := \bigcap_{i \in I} V_i$ and $V_I^\prime := f^{-1}(V_I)$ and let $f_I: V_I^\prime \to V_I$. Each $V_I$ is affinoid and hence by the preceding each $f_I$ commutes with restrictions. Therefore the collection $(f_{I,*})_{I \in \mathcal{I}}$ preserves Cartesian sections. Since the covering of $Y$ was chosen arbitrarily we conclude that $f_*$ commutes with restrictions. 

For arbitary $X$, $Y$, cover $\{V_i \to Y\}_{i \in \mathcal{I}}$ of $Y$ by affinoid subspaces. Let $\mathcal{I}$ be the family of finite subsets of $\mathscr{I}$ and for each $I \in \mathcal{I}$ set $V_I := \bigcap_{i \in I} V_i$ and $V_I^\prime := f^{-1}(V_I)$ and let $f_I: V_I^\prime \to V_I$. Each $V_I$ is an admissible open subspace of a dagger affinoid space and hence by the preceding, each $f_I$ commutes with restrictions. Hence, as before, $f_*$ commutes with restrictions. 

(ii): Since the categories are presentable, it suffices by Lurie's Adjoint Functor Theorem \cite[Corollary 5.5.2.9]{HigherToposTheory} to show that $f_*$ commutes with colimits. Let $(M_i)_{i \in \mathcal{I}}$ be a (small) diagram in $\operatorname{QCoh}(X)$. We need to show that \eqref{eq:colimI}
is an equivalence. Take affinoid coverings $\mathfrak{V}$ of $Y$ and $\mathfrak{U}$ of $X$ such that $\mathfrak{U}$ refines $f^{-1}\mathfrak{V}$. For each $U \in \mathfrak{U}$, let $t_{U}: U \to X$ be the inclusion. The collection $\{t_U^*: U \in \mathfrak{U}\}$ is conservative and so it suffices to check that \eqref{eq:colimI} is an equivalence after applying each $t^*_U$. By the commutation with restrictions, and since $t_U^*$ commutes with colimits, we reduce to when $X$ and $Y$ are both dagger affinoids, which is Lemma \ref{lem:rightadjoint}(ii). 

(iii): Using the same notations as in the proof of (ii), it suffices to check that \eqref{eq:projectionformula} is an equivalence after applying each $t_U^*$. By the commutation with restrictions, and using that each $t_U^*$ is symmetric-monoidal, one reduces to the case when $X$ and $Y$ are both affinoids, which is Lemma \ref{lem:projectionaffinoid}.
\end{proof}
\subsection{Transversal morphisms}
It is often necessary to have a base-change theorem for more general morphisms than the inclusions of admissible open subsets. The purpose of this subsection is to formulate and prove a base-change result for a certain class of morphisms which we call \emph{transversal}. We then show that every smooth morphism of dagger-analytic varieties belongs to this class. We recall the definition of a \emph{transversal morphism} in $\mathsf{Afnd}^\dagger$ from \S\ref{sec:derivedquasiabelian}.
\begin{defn}\label{defn:daggertransversal}
A morphism $t: \operatorname{Sp}(A) \to \operatorname{Sp}(B)$ in $\mathsf{Afnd}^\dagger$ is called \emph{transversal}, or \emph{induced by a transversal morphism of algebras}, if either of the following equivalent conditions are satisfied:
\begin{enumerate}[(i)]
    \item For all maps $u : \operatorname{Sp}(C) \to \operatorname{Sp}(A)$ in $\mathsf{Afnd}^\dagger$, the natural morphism $B \widehat{\otimes}^\mathbf{L}_A C \to B\widehat{\otimes}_A C$ is an equivalence.
    \item For all maps $u: \operatorname{Sp}(C) \to \operatorname{Sp}(A)$, the natural morphism $t^*u_* \to u^\prime_* t^{\prime,*}$ is an equivalence. Here $t^\prime, u^\prime$ are given as in the Cartesian square:
\begin{equation}
\begin{tikzcd}
	{\operatorname{Sp}(C \widehat{\otimes}_B A)} & {\operatorname{Sp}(A)} \\
	{\operatorname{Sp}(C)} & {\operatorname{Sp}(B)}
	\arrow["u^\prime",from=1-1, to=1-2]
	\arrow["t^\prime", from=1-1, to=2-1]
	\arrow["u"', from=2-1, to=2-2]
	\arrow["t", from=1-2, to=2-2]
\end{tikzcd}
    \end{equation}
\end{enumerate}
\end{defn}
\begin{example}\label{example:steadinessaffinoid}
\begin{enumerate}[(i)]
    \item If $t: \operatorname{Sp}(A_U) \to \operatorname{Sp}(A)$ is the embedding of an affinoid subdomain, then $t$ is transversal \cite[Theorem 5.7] {BambozziDaggerBanach}. (We used this fact in the previous subsection). 
    \item If $\operatorname{Sp}(A)$ is any $K$-dagger affinoid, then the structure morphism $t: \operatorname{Sp}(A) \to \operatorname{Sp}(K)$ is transversal. In fact, $A$ is actually flat with respect to the completed tensor product. This is because every Banach space of countable type is isomorphic (as a $K$-Banach space) to $c_0(\mathbb{N})$, c.f. \cite[Proposition 10.4]{schneider_nonarchimedean_2002}, and every dagger affinoid algebra can be expressed as an $\mathbb{N}$-indexed colimit of such objects (via the familiar presentation of the Washnitzer algebra as a colimit of Tate algebras).
\end{enumerate}
\end{example}
\begin{lem}\label{lem:afndtransversalbasechangecompose}
\begin{enumerate}[(i)]
    \item The class of transversal morphisms in $\mathsf{Afnd}^\dagger$ is stable under composition and base change. 
    \item Let $f: X \to Y$ and $g: Y \to Z$ be composable morphisms in $\mathsf{Afnd}^\dagger$ such that both $gf$ and the diagonal $\Delta_g: Y \to Y \times_Z Y$ are transversal. Then $f$ is transversal. 
\end{enumerate}
\end{lem}
\begin{proof}
(i): Immediate from Definition \ref{defn:daggertransversal}. (ii): $f$ can be factored as $X \to X \times_Z Y \to Y$. The first morphism is the base change of $\Delta_g$; the second is the base-change of $gf$. Therefore, the result follows from (i).
\end{proof}
\begin{lem}\cite[Lemma 2.4.18]{mann_p-adic_2022}\label{lem:transversallocal}
Let $f: \operatorname{Sp}(B) \to \operatorname{Sp}(A)$ be a morphism of dagger affinoid algebras. Suppose that there exists an admissible covering $\{t_{i}: \operatorname{Sp}(B_{i}) \to \operatorname{Sp}(B)\}_{i \in \mathcal{I}}$ of $\operatorname{Sp}(B)$ by affinoid subdomains, such that each $ft_{i} : \operatorname{Sp}(B_{i}) \to \operatorname{Sp}(A)$ is transversal. Then $f$ is transversal.
\end{lem}
\begin{proof} This follows from Definition \ref{defn:daggertransversal}(ii) and descent, noting that all relevant pushforwards commute with restrictions. 
\end{proof}
Inspired by \cite[\S 2.4]{mann_p-adic_2022}, we now globalise the notion of transversality to morphisms between dagger-analytic varieties.
\begin{defn}\cite[Definition 2.4.17]{mann_p-adic_2022}\label{defn:transversalglobal}
A morphism $f:X \to Y$ of dagger-analytic varieties is called \emph{transversal} if for all affinoid subdomains $U \subset Y$ and all affinoid subdomains $V \subset f^{-1}(U)$, the morphism $V \to U$ is induced by a transversal morphism of algebras.
\end{defn}
\begin{lem}\cite[Lemma 2.4.19]{mann_p-adic_2022}\label{lem:transversalsourceandtarget}
Let $f: X \to Y$ be a morphism of dagger-analytic varieties. The following are equivalent:
\begin{enumerate}[(i)]
    \item $f$ is transversal.
    \item There exists an admissible covering $\{U_i\}_{i \in \mathcal{I}}$ of $Y$ by affinoid subdomains, such that each $f^{-1}(U_i) \to U_i$ is transversal.
    \item There exists an admissible covering $\{U_i\}_{i \in \mathcal{I}}$ of $Y$ by affinoid subdomains, and for each $i$ an admissible covering $\{V_{ij}\}_{j \in \mathcal{J}(i)}$ of $f^{-1}(U_i)$ by affinoid subdomains, such that each $V_{ij} \to U_i$ is induced by a transversal morphism of algebras.
\end{enumerate}
\end{lem}
\begin{proof}
(i) $\implies$ (ii) $\implies$ (iii) is clear. 

(iii) $\implies$ (ii): Let $i \in \mathcal{I}$ and let $U \subseteq U_i$ and $V \subseteq f^{-1}(U_i)$ be affinoid subdomains. We know that each $V_{ij} \to U_i$ is induced by a transversal morphism of algebras. Refine $\{V_{ij} \cap V\}_j$ further to an admissible affinoid cover $\{W_{ik}\}_k$ of $V$ by affinoids. Then each $W_{ik} \to U$ is induced by a transversal morphism of algebras. Then Lemma \ref{lem:transversallocal} above implies that $V \to U$ is induced by a transversal morphism of algebras.

(ii) $\implies$ (i): Let $U \subseteq Y$ and $V \subseteq f^{-1}(U)$ be affinoid subdomains. We know that each $f^{-1}(U_i) \to U_i$ is transversal. Refine $\{U \cap U_i\}_{i \in \mathcal{I}}$ to an admissible covering $\{W_k\}_k$ of $U$ by affinoid subdomains, and refine $\{f^{-1}W_k \cap V\}_k$ to an admissible covering $\{Z_l\}_l$ of $V$ by affinoid subdomains. Then each $Z_l \to W_k$ (indices chosen appropriately) is induced by a transversal morphism of algebras, hence each $Z_l \to U$ is induced by a transversal morphism of algebras, so by Lemma \ref{lem:transversallocal}, $V \to U$ induced by a transversal morphism of algebras. Hence $f$ is transversal. 
\end{proof}
\begin{lem}
\begin{enumerate}[(i)]
    \item The class of transversal morphisms in $\mathsf{Rig}^\dagger$ is closed under base-change and composition.
    \item Let $f: X \to Y$ and $g: Y \to Z$ be composable morphisms in $\mathsf{Rig}^\dagger$ such that both $gf$ and the diagonal $\Delta_g: Y \to Y \times_Z Y$ are transversal. Then $f$ is transversal.
\end{enumerate}
\end{lem}
\begin{proof}
(i): The claim about composition follows from Lemma \ref{lem:transversalsourceandtarget}(iii). Indeed, let $X \xrightarrow[]{f} Y \xrightarrow[]{g} Z$ be composable morphisms which are both transversal. Choose an admissible cover $\{Z_k\}_k$ of $Z$ by affinoid subdomains, choose an admissible affinoid cover $\{Y_j\}_j$ of $Y$ refining $\{g^{-1}(Z_k)\}_k$ and an admissible affinoid cover $\{X_i\}_i$ of $X$ refining $\{f^{-1}(Y_j)\}_j$. Then each $X_i \to Y_j$ and each $Y_j \to Z_k$ is induced by a transversal morphism of algebras, hence so is $X_i \to Z_k$ (all indices chosen appropriately). Hence by Lemma \ref{lem:transversalsourceandtarget}(iii), $gf$ is transversal.

The claim about base-change also follows from Lemma \ref{lem:transversalsourceandtarget}(iii). Indeed, let $f: X \to Y$ be transversal and let $g: Y^\prime \to Y$ be arbitrary. Choose an admissible affinoid cover $\{Y_j\}_j$ of $Y$ and admissible affinoid covers $\{Y^\prime_{j^\prime}\}_{j^\prime}$ of $Y^\prime$ and $\{X_i\}_i$ of $X$ which refine $\{g^{-1}(Y_j)\}_j$ and $\{f^{-1}(Y_j)\}_j$, respectively. Then each $X_i \times_{Y_j} Y^\prime_{j^\prime}$ is affinoid and the morphism $X_i \times_{Y_j} Y^\prime_{j^\prime} \to Y^\prime_{j^\prime}$ (indices chosen appropriately) is induced by a transveral morphism of algebras, by Lemma \ref{lem:afndtransversalbasechangecompose}(i). Hence we conclude by Lemma \ref{lem:transversalsourceandtarget}(iii).

(ii): Follows from (i) using the usual graph trick.  
\end{proof}
\begin{prop}[Transversal base-change]\label{prop:transversalbasechange}
Let $f: X \to Y$ and $t: Y^\prime \to Y$ be morphisms of dagger-analytic varieties such that $f$ is qcqs and $t$ is transversal. Then the base-change morphism 
\begin{equation}
    t^* f_* \to f_*^\prime t^{\prime, *},
\end{equation}
is an equivalence. Here $t^\prime, f^\prime$ are determined as in the Cartesian square:
\begin{equation}
    % https://q.uiver.app/#q=WzAsNCxbMCwwLCJYXlxccHJpbWUiXSxbMSwwLCJZXlxccHJpbWUiXSxbMCwxLCJYIl0sWzEsMSwiWSJdLFswLDEsImZeXFxwcmltZSJdLFswLDIsInReXFxwcmltZSIsMl0sWzIsMywiZiIsMl0sWzEsMywidCJdXQ==
\begin{tikzcd}[cramped]
	{X^\prime} & {Y^\prime} \\
	X & Y
	\arrow["{f^\prime}", from=1-1, to=1-2]
	\arrow["{t^\prime}"', from=1-1, to=2-1]
	\arrow["t", from=1-2, to=2-2]
	\arrow["f"', from=2-1, to=2-2]
\end{tikzcd}
\end{equation}
\end{prop}
\begin{proof}
This is proved exactly as in \cite[Proposition 2.4.21]{mann_p-adic_2022}. However, we reproduce the proof here to convince the reader that it is true in or context. 

Suppose first that $Y$ and $Y^\prime$ are both affinoid and $X$ is an admissible open subset of an affinoid space $Z$. Choose an admissible affinoid covering $\{X_i\}_{i=1}^n$ of $X$ and let $\mathcal{I}$ be the family of finite nonempty subsets $I \subseteq \{1, \dots, n\}$ and for each $I \in \mathcal{I}$ let $X_I$ be the corresponding intersection and $f_I$ be the restriction. Each $X_I$ is affine. By descent, $f_* \simeq \varprojlim_{I} f_{I,*}$. Hence, we may conclude in this case using Lemma \ref{lem:afndtransversalbasechangecompose} and the fact that $t^*$ commutes with finite limits.

Now the case when $Y$ and $Y^\prime$ are both affinoid (and hence $X$ is qcqs), is handled in a similar way, because we now know the base-change result in the case when $X$ is a quasiaffinoid. 

In general, by the commutation of $f_*$ with restrictions (Proposition \ref{prop:pushforward}) and descent one reduces to the case when $Y$ and $Y^\prime$ are both affinoid, hence $X$ is qcqs, proving the claim.  
\end{proof}
The following Proposition provides plenty of examples of transversal morphisms. 
\begin{prop}[Smooth base-change]\label{prop:smoothbasechange}
Let $g: X \to Y$ be a smooth morphism of dagger-analytic varieties. Then $g$ is transversal. 
\end{prop}
\begin{proof}
By Lemma \ref{lem:transversalsourceandtarget}(iii) above, we may work locally on the source and target. Thus, we may assume that $g$ factors as $g = ph$ where $p: \mathbb{D}^n_K \times Y \to Y$ is the projection off some (closed) polydisk and $h$ is \'etale. Let us deal with these cases separately.

\emph{$p$ is transversal}: This follows from Example \ref{example:steadinessaffinoid}(ii) and the fact that transversal morphisms are stable under base-change. 

\emph{$h$ is transversal}: We will show that every \'etale morphism of dagger-analytic varieties is transversal. By Lemma \ref{lem:transversalsourceandtarget}(iii) we reduce to the case when $h$ is a morphism between affinoids. By using the structure theorem for \'etale morphisms\footnote{This is stated for rigid-analytic varieties, but it works just as well for dagger-analytic varieties.} \cite[Proposition 3.1.4]{vanderPutetale}, and Lemma \ref{lem:transversalsourceandtarget}(iii) again, one reduces to the case when $h$ is induced by a finite \'etale morphism $h^\#: B \to A$ of dagger affinoid algebras. Working locally using Lemma \ref{lem:transversalsourceandtarget}(iii) again, we may assume that $A$ is finite free as a $B$-module. In this case it is clear that Definition \ref{defn:daggertransversal}(i) is satisfied.
\end{proof}
\subsection{Coherent sheaves}\label{sec:coherentsheaves}
In this section we investigate a notion of ``coherent sheaves".

First, let $\mathcal{A}$ be an elementary quasi-abelian category as in \S\ref{sec:derivedquasiabelian}. We recall that the stable $\infty$-category $N(\operatorname{Ch}(\mathcal{A}))[W^{-1}]$ is equipped with a canonical $t$-structure, the \emph{left $t$-structure} of \cite[\S 1.2]{schneiders_quasi-abelian_1999}. The heart of this $t$-structure is the left abelian envelope $LH(\mathcal{A})$. This contains $\mathcal{A}$ as a full subcategory via the functor $I: \mathcal{A} \to LH(\mathcal{A})$, c.f. \cite[\S 1.2.3]{schneiders_quasi-abelian_1999}.

Let $A$ be a $K$-dagger affinoid algebra. By \cite[Lemma 2.3.96]{DAnG} there is a $t$-structure on $\operatorname{QCoh}(\operatorname{Sp}(A))$ such that
\begin{equation}
   \operatorname{QCoh}(\operatorname{Sp}(A))^\heartsuit \cong  \operatorname{Mod}_{I(A)} LH(\mathsf{IndBan}_K). 
\end{equation}
Due to \cite[Proposition 2.6]{bode_six_2021} and the flatness of $A$ with respect to the completed tensor product\footnote{In order to apply the result of \cite[Proposition 2.6]{bode_six_2021}, it suffices to show that the natural morphism $H^0(A \widehat{\otimes}_K^\mathbf{L} V) \to I(A \widehat{\otimes}_K V)$ is an isomorphism, for any $V \in \mathsf{IndBan}_K$. This follows from the flatness of $K \to A$.} (c.f. Example \ref{example:steadinessaffinoid}(ii)), there is an equivalence of categories
\begin{equation}
   LH(\operatorname{Mod}_A(\mathsf{IndBan}_K)) \xrightarrow[]{\sim} \operatorname{Mod}_{I(A)} LH(\mathsf{IndBan}_K).
\end{equation}
Hence, we obtain cohomology functors 
    \begin{equation}
    \begin{aligned}
             H^i :   \operatorname{QCoh}(\operatorname{Sp}(A)) \to  LH(\mathsf{Mod}_A(\mathsf{IndBan}_K)), && i \in \mathbb{Z}.
    \end{aligned}
    \end{equation}
There is a fully-faithful functor
\begin{equation}
    \{\text{abstract finitely-generated $A$-modules}\}  \hookrightarrow \mathsf{Mod}_A(\mathsf{IndBan}_K),
\end{equation}
which equips a finitely-generated $A$-module with its canonical bornology \cite[Corollary 6.1.6]{BambozziThesis}. Putting this all together, we obtain a fully-faithful functor from the category of abstract finitely-generated $A$-modules, to $\operatorname{QCoh}(\operatorname{Sp}(A))^\heartsuit$.
In particular the following definition makes sense. 
\begin{defn}
Let $\operatorname{Sp}(A) \in \mathsf{Afnd}^\dagger$. We define $\operatorname{Coh}^-(\operatorname{Sp}(A)) \subseteq \operatorname{QCoh}(\operatorname{Sp}(A))$ as the full sub $\infty$-category spanned by objects $M$ such that $H^n(M)$ is a finitely generated $A$-module for all $n \in \mathbb{Z}$, and $H^n(M) = 0$ for $n \gg 0$. 
\end{defn}
\begin{lem}\label{lem:Coh-pullback}
\begin{enumerate}[(i)]
    \item Let $f: \operatorname{Sp}(A) \to \operatorname{Sp}(B)$ be a morphism in $\mathsf{Afnd}^\dagger$ and let $M \in \operatorname{Coh}^-(\operatorname{Sp}(B))$. Then $f^*M \in \operatorname{Coh}^-(\operatorname{Sp}(A))$. 
    \item For any $M, N \in \operatorname{Coh}^-(\operatorname{Sp}(A))$, $M \widehat{\otimes}^\mathbf{L}_A N$ belongs to $\operatorname{Coh}^-(\operatorname{Sp}(A))$.
\end{enumerate}
\end{lem}
\begin{proof}
(i): Since $M$ is cohomologically bounded above, there is\footnote{In this section, $\operatorname{Tor}$ always refers to the derived functors of the \emph{completed} tensor product.} a convergent spectral sequence 
\begin{equation}\label{eq:spectral1}
\begin{aligned}
E_2^{pq} : \operatorname{Tor}^B_p(A, H^qM) \Rightarrow H^{q-p}(f^*M).
\end{aligned}
\end{equation}
Since $B$ is Noetherian, the finitely-generated $B$-module $H^qM$ has a resolution by finite-free $B$-modules and hence $\operatorname{Tor}^B_p(A, H^qM)$ is a finitely-generated $A$-module, for all $p, q \in \mathbb{Z}$ (and zero for $q \gg 0$). Hence, by convergence of \eqref{eq:spectral1} we see that  $H^{n}(f^*M)$ is a finitely-generated $A$-module for all $n \in \mathbb{Z}$ (and $H^n(f^*M) = 0$ for $n \gg 0$).

(ii): Since $M, N$ are cohomologically bounded above, there is a convergent spectral sequence 
\begin{equation}\label{eq:spectral2}
\begin{aligned}
    E_2^{pq}: \bigoplus_{q^\prime + q^{\prime \prime} = q}\operatorname{Tor}^A_p(H^{q^\prime}M, H^{q^{\prime \prime}}N) \Rightarrow H^{q-p}(M \widehat{\otimes}_A^\mathbf{L} N).
\end{aligned}
\end{equation}
Since $A$ is Noetherian, each $H^{q^\prime}N, H^{q^{\prime \prime}}M$ has a resolution by finite-free $A$-modules and hence $\operatorname{Tor}^A_p(H^{q^\prime}M, H^{q^{\prime \prime}}N)$ is a finitely-generated $A$-module, for all $p, q^\prime, q^{\prime \prime} \in \mathbb{Z}$ (and zero for $ q^\prime \gg 0$ or $q^{\prime \prime} \gg 0$). Hence, by convergence of \eqref{eq:spectral2} we see that $H^n(M \widehat{\otimes}_A^\mathbf{L} N)$ is a finitely-generated $A$-module for all $n \in \mathbb{Z}$ (and zero for $n \gg 0$). 
\end{proof}
By Lemma \ref{lem:Coh-pullback} we obtain a sub-prestack $\operatorname{Coh}^- : (\mathsf{Afnd}^\dagger)^\mathsf{op} \to \mathsf{CAlg}(\mathsf{Cat}_\infty)$ of $\operatorname{QCoh}$, valued in stable symmetric monoidal $\infty$-categories. Therefore the statement of the following Proposition makes sense. 
\begin{prop}\label{prop:Cohdescent1}
Let $X \in \mathsf{Afnd}^\dagger$ and let $\{U_i \to X\}_{i \in \mathcal{I}}$ be a (finite) covering of $X$ by affinoid subdomains. Then the natural morphism
\begin{equation}\label{eq:Cohdescent1}
    \operatorname{Coh}^-(X) \to \varprojlim_{I \subseteq \mathcal{I}} \operatorname{Coh}^-(U_I)
\end{equation}
is an equivalence. Here the limit on the right runs over the finite nonempty subsets $I \subseteq \mathcal{I}$. 
\end{prop}
\begin{proof}
Let $t_I: U_I \to X$ be the inclusions and let us write $X = \operatorname{Sp}(A), U_I = \operatorname{Sp}(A_I)$. Let $(M_I)_{I \subseteq \mathcal{I}}$ be an object of the right-side of \eqref{eq:Cohdescent1}. Because of Lemma \ref{lem:Coh-pullback} and Proposition \ref{prop:descentqcoh}, the only thing to show is that $\varprojlim_{I \subseteq \mathcal{I}} t_{I,*} M_I$ belongs to $\operatorname{Coh}^-(X)$. We consider the hypercohomology spectral sequence
\begin{equation}\label{eq:spectral3}
\begin{aligned}
    E_2^{pq} : H^p\varprojlim_{I \subseteq \mathcal{I}} H^q(t_{I,*} M_I) \Rightarrow H^{p+q} \varprojlim_{I \subseteq \mathcal{I}} t_{I,*} M_I.
\end{aligned}
\end{equation}
For $J \supseteq I$ there are natural equivalences $A_J \widehat{\otimes}^\mathbf{L}_{A_I} M_I \xrightarrow[]{\sim} M_J$, (because $(M_I)_I$ is a coCartesian section). The spectral sequence 
\begin{equation}
 E_2^{pq}: \operatorname{Tor}_p^{A_I}(A_J, H^q M_I) \Rightarrow H^{q-p}(A_J \widehat{\otimes}^\mathbf{L}_{A_I} M_I)
\end{equation}
collapses because finitely-generated modules are transversal to affinoid localizations, and gives a natural isomorphism $A_J \widehat{\otimes}_{A_I} H^qM_I \cong  H^q(t_{J,*} M_J)$. Therefore the dagger-analytic version of Tate acyclicity \cite[Proposition 2.6]{GrosseKlonneRigid} implies that $H^p\varprojlim_{I \subseteq \mathcal{I}} H^q(t_{I,*} M_I) = 0$, whenever $p>0$. Therefore, the spectral sequence \eqref{eq:spectral3} collapses and the edge morphism gives an isomorphism
\begin{equation}
\begin{aligned}
    H^{n} \varprojlim_{I \subseteq \mathcal{I}} t_{I,*} M_I &\cong H^0\varprojlim_{I \subseteq \mathcal{I}} H^n(t_{I,*} M_I) \\
    &= \operatorname{eq} \bigg( \prod_i H^n M_i \rightrightarrows \prod_{i,j}  H^n M_{ij} \bigg),
\end{aligned}
\end{equation}
which is a finitely generated $A$-module, by the theorem of descent for coherent sheaves on dagger spaces \cite[Theorem 2.16]{GrosseKlonneRigid}. 
\end{proof}
\begin{prop}\label{prop:cohdescent2}
Let $X \in \mathsf{Afnd}^\dagger$ and let $\{U_i \to X\}_{i \in \mathcal{I}}$ be a (finite) covering of $X$ by affinoid subdomains. Let $\mathfrak{U}_\bullet \to X$ be the \v{C}ech nerve of $\bigsqcup_{i \in \mathcal{I}} U_i \to X$. Then, the canonical morphism
\begin{equation}
    \operatorname{Coh}^-(X) \to \varprojlim_{[m] \in \Delta} \operatorname{Coh}^-(\mathfrak{U}_m)
\end{equation}
is an equivalence.
\end{prop}
\begin{proof}
This is the same, \emph{mutandis mutatis}, as the proof of Proposition \ref{prop:Cohdescent1}, using Proposition \ref{prop:cechdescent} in place of Proposition \ref{prop:descentqcoh}. 
\end{proof}
\begin{cor}
\begin{enumerate}[(i)]
    \item $\operatorname{Coh}^-:(\mathsf{Afnd}^\dagger)^\mathsf{op} \to \mathsf{CAlg}(\mathsf{Cat}_\infty)$ is a sheaf on the site $\mathsf{Afnd}^{\dagger}_w$. 
    \item Right Kan extension of functors along $(\mathsf{Afnd}^\dagger)^\mathsf{op} \to (\mathsf{Rig}^\dagger)^\mathsf{op}$ makes $\operatorname{Coh}^-$ into a sheaf on the site $\mathsf{Rig}^\dagger_s$ of all dagger-analytic varieties equipped with the strong topology. 
\end{enumerate}
\end{cor}
\begin{proof}
(i): Noting that the weak topology is finitary, this follows from \cite[Proposition A.3.14]{mann_p-adic_2022}, using Proposition \ref{prop:cohdescent2} above.

(ii): Follows from (i) using \cite[A.3.11(ii)]{mann_p-adic_2022}. \end{proof}
\section{Pushforward with compact supports}\label{sec:compactsupports}
In analytic geometry, there are many interesting morphisms which are not quasi-compact. The pushforwards along such morphisms will not, in general, have the good properties of Proposition \ref{prop:pushforward} and Proposition \ref{prop:transversalbasechange}. In \S\ref{subsubsec:compactpush} below, we define a compactly supported pushforwards functor $f_!$, for any morphism $f:X \to Y$ of dagger analytic varieties, which coincides with $f_*$ for qcqs morphisms. This definition allows us to bootstrap the good properties of $f_*$ from the qcqs case, to arbitrary morphisms. In particular $f_!$ commutes with restrictions, satisfies the projection formula, and admits a right adjoint $f^!$.

In \S\ref{subsubsec:domainembeddings} below we investigate the functor $h_!$ for inclusions of admissible open subsets $h:U \to X$. In particular for $U \subset X$ such that the inclusion of the complement $k: V := X \setminus U \to X$ is a quasi-compact morphism, we obtain an ``excision" fiber sequence in Proposition \ref{prop:hlowershriek1}, and one has $h^* \simeq h^!$ for such inclusions. Unsurprisingly, many of the proofs in these sections are similar to \cite[Lecture XII]{CondensedComplexGeometry}. 

In the mostly technical \S\ref{subsubsec:compositionf_!} below we give a partial answer to the question of when the definition of $f_!$ can be simplified, and when, given morphisms $f:X \to Y$ and $g:Y \to Z$, one has $g_! \circ f_! \simeq (gf)_!$. We show that this holds whenever $g$ is quasi compact or $g$ is partially proper and $Z$ is quasi-paracompact.

\subsection{The functor $f_!$ and its properties}\label{subsubsec:compactpush}
Let $f:X \to Y$ be any morphism of dagger analytic varieties. 
\begin{defn}\label{def:relcompact}
We let $c(X/Y)$ denote the poset of all admissible open subsets $Z \subseteq X$ such that: 
\begin{itemize}
    \item[$\star$] $Z$ admits a strict neighbourhood\footnote{Recall that this means $\{X\setminus Z, Z^\prime\}$ is an admissible covering in the strong G-topology.} $Z^\prime$ such that the composite $Z^\prime \hookrightarrow X \xrightarrow{f} Y$ is qcqs.   
    \item[$\star$]  Let $i: X \setminus Z \to X$ be the inclusion. Then we require that $i$ is a quasi-compact morphism.
\end{itemize}
\end{defn}
\begin{defn}\label{defn:compactsupportsrelY}
We say that $M \in \operatorname{QCoh}(X)$ is \emph{compactly supported (relative to $Y$)} if there exists $Z \in c(X/Y)$ such that $i^*M \simeq 0$, where $i : X \setminus Z \to X$ is the inclusion.
\end{defn}
\begin{example}
 If $f$ is qcqs then $X \in c(X/Y)$ is terminal.  
\end{example}
\begin{example}\label{example:disk}
Let $f: X := \mathring{\mathbb{D}}^1_K \to \mathbb{D}^1_K =: Y$ be the inclusion of open unit disk into the closed unit disk. For an interval $I \subset \mathbb{R}$ with endpoints in $\sqrt{|K|^\times}$, let $\mathbb{A}_K(I)$ be the associated dagger analytic annulus over $K$. From this we see that the family 
\begin{equation}
    \left\{\mathring{\mathbb{D}}^1_K(\varrho) : \varrho \in \sqrt{|K|^\times}, \varrho < 1\right\},
\end{equation}
is contained in $c(X/Y)$: setting $Z = \mathring{\mathbb{D}}^1_K(\varrho)$, one can take $Z^\prime = \mathbb{D}^1_K(\varrho^{1/2})$, in the definition, and the morphism $X \setminus Z = \mathbb{A}_K([\varrho, 1)) \to X$ is quasi-compact. It is easy to see that this family is cofinal in $c(X/Y)$. Hence, one has that $N \in \operatorname{QCoh}(X)$ is compactly supported, if and only if there exists $\varrho < 1$ such that $i_\varrho^* N \simeq 0$, where $i_\varrho: \mathbb{A}_K([\varrho, 1)) \to \mathring{\mathbb{D}}^1_K$ is the inclusion.
\end{example}
\begin{lem}\label{lem:cXYbasechange}
\begin{enumerate}[(i)]
    \item Suppose that we are given a Cartesian square 
\begin{equation}
\begin{tikzcd}
	{X^\prime} & {Y^\prime} \\
	X & Y
	\arrow["{f^\prime}", from=1-1, to=1-2]
	\arrow["f", from=2-1, to=2-2]
	\arrow["{g^\prime}", from=1-1, to=2-1]
	\arrow["g", from=1-2, to=2-2]
	\arrow["\lrcorner"{anchor=center, pos=0.125}, draw=none, from=1-1, to=2-2]
\end{tikzcd}
\end{equation}
of dagger analytic varieties over $K$. Then $g^{\prime,-1}$ maps $c(X/Y)$ to $c(X^\prime/Y^\prime)$.
\item Let $f: X \to Y$ and $g: W \to X$ be morphisms of dagger-analytic varieties. If $Z \in c(X/Y)$ and $C \in c(W/X)$ then $g^{-1}(Z) \cap C \in c(W/Y)$.
\end{enumerate}

\end{lem}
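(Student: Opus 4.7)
The natural approach is to verify directly that $g'^{-1}(Z)$ inherits both defining properties of $c(X/Y)$ from $Z$, using the fact that the various subsets and morphisms in question arise as base changes from the bottom row of the Cartesian square.

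More concretely, given $Z \in c(X/Y)$ with strict neighbourhood $Z' \subset X$, I would take the candidate strict neighbourhood in $X'$ to be $Z'' := g'^{-1}(Z')$. The first task is to check that $Z'' \supset g'^{-1}(Z)$ is indeed a strict neighbourhood, i.e.\ that $\{X' \setminus g'^{-1}(Z), Z''\}$ is an admissible covering of $X'$ in the strong G-topology. Since admissible open subsets and admissible coverings pull back along morphisms of dagger analytic varieties, this follows directly from the hypothesis that $\{X \setminus Z, Z'\}$ is an admissible covering of $X$, because $g'^{-1}(X \setminus Z) = X' \setminus g'^{-1}(Z)$ and $g'^{-1}(Z') = Z''$.

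The second task is to verify the two quasi-compactness conditions. For the first, the composite $Z'' \hookrightarrow X' \xrightarrow{f'} Y'$ fits into a Cartesian square whose other edge is the composite $Z' \hookrightarrow X \xrightarrow{f} Y$, which is quasi-compact by assumption. Hence I would invoke the stability of quasi-compact morphisms under base change (a standard feature of dagger analytic geometry, inherited from rigid geometry via the completion functor of \cite{GrosseKlonneRigid}) to conclude that $Z'' \to Y'$ is quasi-compact. For the second, I would observe that the inclusion $X' \setminus g'^{-1}(Z) \hookrightarrow X'$ is the base change of the inclusion $X \setminus Z \hookrightarrow X$ along $g' : X' \to X$, and the latter is quasi-compact by assumption on $Z$; so again stability under base change gives the desired conclusion.

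The only non-trivial ingredient is the stability of quasi-compact morphisms under base change in the category of (separated) dagger analytic varieties, but this is a routine extension of the corresponding rigid-analytic fact, using that fiber products exist in this category and that an admissible affinoid covering of the source pulls back to one of the fibered product. There are no genuine obstacles; the proof will be a short verification once the correct candidate $Z'' = g'^{-1}(Z')$ is identified.
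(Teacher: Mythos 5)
Your verification is correct and complete. The paper omits the proof of this lemma as routine; your argument is precisely the natural verification: you correctly identify $Z'' = g'^{-1}(Z')$ as the candidate strict neighbourhood, note that admissible coverings pull back (so $\{X' \setminus g'^{-1}(Z), Z''\}$ remains admissible), and reduce both quasi-compactness conditions of Definition~\ref{def:relcompact} to the stability of quasi-compact morphisms under base change, identifying $Z'' \to Y'$ as the base change of $Z' \to Y$ along $g$ (via $Z' \times_Y Y' \cong Z' \times_X X' = g'^{-1}(Z')$) and $X' \setminus g'^{-1}(Z) \to X'$ as the base change of $X \setminus Z \to X$ along $g'$. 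Nothing is missing.
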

\begin{proof}
Both statements follow straightforwardly from the Definition \ref{defn:compactsupportsrelY}. 
\end{proof}
\begin{lem}\label{lem:counitiso}
Let $Z \in c(X/Y)$ and let $Z^\prime$ be given as in Definition \ref{def:relcompact}. Let $i:X \setminus Z \to X$ and $j: Z^\prime \to X$ the inclusions. 
Then: 
\begin{enumerate}[(i)]
    \item  The counit $i^*i_* \to \operatorname{id}$ is an equivalence.
    \item Suppose that $i^*M \simeq 0$. Then:
    \begin{enumerate}
        \item For any $N \in \operatorname{QCoh}(X)$ one has $i^*(M \widehat{\otimes}_XN) \simeq 0$;
        \item The unit morphism $M \to j_*j^*M$ is an equivalence. 
    \end{enumerate}
\end{enumerate}
\end{lem}
\begin{proof}
(i): Since $i: X \setminus Z \to X$ is a quasi-compact morphism, this follows from Proposition \ref{prop:pushforward}. (ii)(a): This is immediate since $i^*$ is symmetric monoidal. (ii)(b): By using the admissible covering $\{X \setminus Z, Z^\prime\}$, this follows from the fact that $\operatorname{QCoh}$ is a sheaf in the strong topology.
\end{proof}
Again, let $Z \in c(X/Y)$ and let $i: X \setminus Z \to X$ and $h: Z \to X$ be the inclusions. We may define, for $M \in \operatorname{QCoh}(X)$, the functor
\begin{equation}\label{eq:defishriekstar}
h_!^*M := \operatorname{Fib}(M \to i_*i^*M),
\end{equation}
and we define $\Gamma_Z\operatorname{QCoh}(X)$ to be the full subcategory of $\operatorname{QCoh}(X)$ on those objects $M$ with $i^*M \simeq 0$.
\begin{lem}\label{lem:ishriekstarcolimpreserve}
The functor $h_!^*$ is colimit preserving, and its essential image is contained in $\Gamma_Z\operatorname{QCoh}(X)$.
\end{lem}
\begin{proof}
Since $i$ is quasi-compact, the functor $i_*$ commutes with colimits by Proposition \ref{prop:pushforward}, and hence so does $h_!^*$, by the property of stable $\infty$-categories. The equivalence $i^*h^!_* \simeq 0$ follows from the property of stable $\infty$-categories and base-change.
\end{proof}

\begin{defn}
Let $f: X \to Y$ be a morphism of dagger analytic varieties. We define the \emph{pushforward with compact supports (relative to $Y$)}, denoted $f_!$, as the left Kan extension of $f_*$ from the full subcategory of compactly supported (relative to $Y$) objects, (c.f. Definition \ref{defn:compactsupportsrelY}).
\end{defn}
\begin{defn}\label{defn:exhaustible}
Let $f: X \to Y$ be a morphism of dagger analytic varieties. We say that $f$ is \emph{exhaustible} if $f$ is quasiseparated and $\{Z : Z \in c(X/Y)\}$ is an admissible covering of $X$.
\end{defn}
\begin{lem}\label{lem:exhaustibleproperties}
\begin{enumerate}[(i)]
    \item The class of exhaustible morphisms is stable under base-change and composition.
    \item If $f,g$ are composable morphisms of dagger-analytic varieties such that $fg$ is exhaustible and $f$ is quasiseparated then $g$ is exhaustible.
\end{enumerate}
\end{lem}
\begin{proof}
(i) By Lemma \ref{lem:cXYbasechange}(i), the class of exhaustible morphisms is stable under base-change. By Lemma \ref{lem:cXYbasechange}(ii), if $f: X \to Y$ and $g : W \to X$ are exhaustible, then $\{ g^{-1}(Z) \cap C: Z \in c(X/Y), C \in c(W/Y)\}$ is a cover of $W$ by objects of $c(W/Y)$ and hence $fg$ is exhaustible.

(ii): Follows from (i) by the usual graph trick, noting that every qcqs morphism is exhaustible. 
\end{proof}
\begin{lem}\label{lem:Indsystems}\cite[Lemma 12.14]{CondensedComplexGeometry}
\begin{enumerate}[(i)]
    \item Let $M \in \operatorname{QCoh}(X)$. Then the natural morphism 
\begin{equation}
``\underrightarrow\lim"h_!^*M \to ``\underrightarrow\lim"N  
\end{equation}
is an equivalence in $\operatorname{Ind}(\operatorname{QCoh}(X))$, where former Ind-system runs over all $Z \in c(X/Y)$, and $h: Z \to X$ is the inclusion, and the latter Ind-system runs over all compactly supported (relative to $Y$) $N$ mapping to $M$.
\item Assume that $f$ is exhaustible. Then the natural morphism 
\begin{equation}\label{eq:exhaustibleiso}
    \varinjlim h_!^*M \to M
\end{equation}
is an equivalence, where the colimit on the left runs over any subset of $c(X/Y)$ which covers $X$. 
\end{enumerate}

\end{lem}
\begin{proof}
(i): We need to show that this is well-defined, and that the former is cofinal in the latter. Let $i: X \setminus Z \to X$ be the inclusion. If $N$ satisfies $i^*N \simeq 0$ for some $Z \in c(X/Y)$, then, by naturality of $\operatorname{id} \to i_*i^*$, we see that $N \to M$ factors through $h^*_! M := \operatorname{Fib}(M \to i_*i^*M)$, and this belongs to $\Gamma_Z \operatorname{QCoh}(X)$ by Lemma \ref{lem:ishriekstarcolimpreserve}.

(ii): The collection of functors $t^*$, where $t: Z \to X$ ranges over the inclusions of those $Z \in c(X/Y)$ in the covering, is conservative, and each $t^*$ is colimit-preserving. Hence, it suffices to check that \eqref{eq:exhaustibleiso} is an equivalence after applying $t^*$. But then, by base-change, the diagram defining the colimit $\varinjlim t^*h_!^*M$ has terminal object $t^*t_!^*M \simeq M$, proving the claim. 
\end{proof}
By properties of left Kan extension, there is a natural transformation $f_! \to f_*$ which restricts to the identity on compactly supported (relative to $Y$) objects. By the coend formula $f_!$ is given on objects by 
\begin{equation}
    f_! M = \varinjlim f_* N
\end{equation}
where the colimit runs over all compactly supported (relative to $Y$) $N$ with a map to $M$. By Lemma \ref{lem:Indsystems} this can also be computed as 
\begin{equation}\label{eq:defcompactsupports}
    f_! M = \varinjlim_{Z \in c(X/Y)} f_*h_!^*M.
\end{equation}
\begin{prop}\label{prop:compactlysupported}
Let $f: X \to Y$ be any morphism of dagger-analytic varieties and suppose that $M \in \operatorname{QCoh}(X)$ is compactly supported (relative to $Y$). Then:
\begin{enumerate}[(i)]
    \item For any $N \in \operatorname{QCoh}(Y)$, the natural morphism $f_* M \widehat{\otimes}_Y N \to f_*(M \widehat{\otimes}_X f^*N)$ is an equivalence. 
    \item Let $t: Y^\prime \to Y$ be any transversal morphism and let $f^\prime, t^\prime$ be determined by the Cartesian square:
    \begin{equation}\label{eq:cartesianf}
        % https://q.uiver.app/#q=WzAsNCxbMCwwLCJYXlxccHJpbWUiXSxbMSwwLCJZXlxccHJpbWUiXSxbMCwxLCJYIl0sWzEsMSwiWSJdLFswLDEsImZeXFxwcmltZSJdLFswLDIsInReXFxwcmltZSIsMl0sWzIsMywiZiJdLFsxLDMsInQiXSxbMCwzLCIiLDEseyJzdHlsZSI6eyJuYW1lIjoiY29ybmVyLWludmVyc2UifX1dXQ==
\begin{tikzcd}[cramped]
	{X^\prime} & {Y^\prime} \\
	X & Y
	\arrow["{f^\prime}", from=1-1, to=1-2]
	\arrow["{t^\prime}"', from=1-1, to=2-1]
	\arrow["\ulcorner"{anchor=center, pos=0.125}, draw=none, from=1-1, to=2-2]
	\arrow["t", from=1-2, to=2-2]
	\arrow["f", from=2-1, to=2-2]
\end{tikzcd}
    \end{equation}
Then, the natural morphism $t^*f_*M \to f^\prime_* t^{\prime,*}M $ is an equivalence. 
\end{enumerate}
\end{prop}
\begin{proof}
Let us take $Z \in c(X/Y)$ such that $i^*M \simeq 0$, where $i: X\setminus Z \to X$ is the inclusion. Let $Z^\prime$ be given as in Definition \ref{defn:compactsupportsrelY} and let $j: Z^\prime \to X$ be the inclusion. The morphism $fj: Z^\prime \to Y$ is qcqs. Hence, for (i), we may use the projection formula for $fj$ to obtain a chain of equivalences (for arbitrary $N$):
\begin{equation}
\begin{aligned}
    f_* M \widehat{\otimes}_Y N &\simeq ((fj)_*j^*M) \widehat{\otimes}_Y N\\
    &\simeq (fj)_* (j^*M \widehat{\otimes}_{Z^\prime}(fj)^* N) \\
    &\simeq f_*j_* j^*(M \widehat{\otimes}_X f^* N) \\
    &\simeq f_* (M \widehat{\otimes}_X f^* N),
\end{aligned}
\end{equation}
where we used Lemma \ref{lem:counitiso}. For (ii), let $Z^{\prime \prime}$, $t^{\prime \prime}$ and $j^\prime$ be determined by the Cartesian square:
\begin{equation}
    % https://q.uiver.app/#q=WzAsNCxbMSwwLCJYXlxccHJpbWUiXSxbMSwxLCJYIl0sWzAsMSwiWl5cXHByaW1lIl0sWzAsMCwiWl57XFxwcmltZVxccHJpbWV9Il0sWzAsMSwidF5cXHByaW1lIl0sWzIsMSwiaiIsMl0sWzMsMiwidF57XFxwcmltZSBcXHByaW1lfSIsMl0sWzMsMCwial5cXHByaW1lIl0sWzMsMSwiIiwxLHsic3R5bGUiOnsibmFtZSI6ImNvcm5lci1pbnZlcnNlIn19XV0=
\begin{tikzcd}[cramped]
	{Z^{\prime\prime}} & {X^\prime} \\
	{Z^\prime} & X
	\arrow["{j^\prime}", from=1-1, to=1-2]
	\arrow["{t^{\prime \prime}}"', from=1-1, to=2-1]
	\arrow["\ulcorner"{anchor=center, pos=0.125}, draw=none, from=1-1, to=2-2]
	\arrow["{t^\prime}", from=1-2, to=2-2]
	\arrow["j"', from=2-1, to=2-2]
\end{tikzcd}
\end{equation}
By transversal base-change for $fj$, one has a chain of equivalences 
\begin{equation}\label{eq:basechangeformula!}
    t^*f_*M \simeq t^* (fj)_* j^* M \simeq (f^\prime j^\prime)_* t^{\prime \prime,*} j^*M 
    \simeq f^\prime_* j^\prime_* j^{\prime,*} t^{\prime, *}M 
    \simeq f^\prime_* t^{\prime,*} M,
\end{equation}
where we used Lemma \ref{lem:counitiso} again.
\end{proof}
\begin{prop}\label{prop:lowershriekbasechange}
Let $f: X \to Y$ be any morphism of dagger analytic varieties.
\begin{enumerate}[(i)]
    \item The functor $f_!$ admits a right adjoint $f^!$.
    \item Assume that $f$ is exhaustible. There is a natural equivalence $f_! \widehat{\otimes}_Y \operatorname{id} \xrightarrow[]{\sim} f_! (\operatorname{id} \widehat{\otimes}_X f^*)$ which restricts to $f_* \widehat{\otimes}_Y \operatorname{id} \xrightarrow[]{\sim} f_*(\operatorname{id} \widehat{\otimes}_X f^*)$ whenever the first argument is compactly supported (relative to $Y$).
    \item Assume that $f$ is exhaustible. Let $t: Y^\prime \to Y$ be any transversal morphism and let $f^\prime, t^\prime$ be determined as in the Cartesian square \eqref{eq:cartesianf}. Then, there is a natural equivalence $t^*f_! \xrightarrow[]{\sim} f^\prime_! t^{\prime,*}$, which restricts to $t^*f_* \xrightarrow[]{\sim} f^\prime_* t^{\prime, *}$ on compactly supported (relative to $Y$) objects. 
\end{enumerate}
\end{prop}
\begin{proof}
(i): Since all categories involved are presentable, it suffices by \cite[Corollary 5.5.2.9]{HigherToposTheory} to show that $f_!$ is colimit-preserving. By \eqref{eq:defcompactsupports}, it suffices to show that, for each $Z \in c(X/Y)$, the functor $f_*h_!^*$ is colimit-preserving. With $Z^\prime$ and $j: Z^\prime \to X$ given as in Definition \ref{defn:compactsupportsrelY}, there is an equivalence $f_*h^*_! \simeq (fj)_* j^*h^*_!$, (where we used Lemma \ref{lem:counitiso} and Lemma \ref{lem:ishriekstarcolimpreserve}), and this is indeed colimit-preserving by Proposition \ref{prop:pushforward} and Lemma \ref{lem:ishriekstarcolimpreserve}. 

(ii): The asserted natural transformation $
f_! \widehat{\otimes}_Y \operatorname{id} \xrightarrow[]{} f_! (\operatorname{id} \widehat{\otimes}_X f^*)$ exists, by Kan extension. In order to prove that it is an equivalence let us fix $L \in \operatorname{QCoh}(Y)$ and $M \in \operatorname{QCoh}(X)$ and consider 
\begin{equation}\label{eq:ML}
    f_! M\widehat{\otimes}_Y L \xrightarrow[]{} f_! (M \widehat{\otimes}_X f^*L). 
\end{equation}
Since $f$ is exhaustible, by Lemma \ref{lem:Indsystems} we may write $M$ as a colimit $M = \varinjlim N$ of compactly supported (relative to $Y$) objects $N$. By part (i) both sides of \eqref{eq:ML} commute with colimits in $M$. Hence, we reduce to the case when $M = N$ is compactly supported, which is Proposition \ref{prop:compactlysupported}(i) above. 

(iii): Again, the asserted natural transformation $t^*f_! \xrightarrow[]{} f^\prime_! t^{\prime,*}$ exists, by Kan extension. In order to prove that it is an equivalence, we take $M \in \operatorname{QCoh}(X)$ and consider 
\begin{equation}\label{eq:Mbasechg}
t^*f_!M \xrightarrow[]{} f^\prime_! t^{\prime,*}M.
\end{equation}
Since $f$ is exhaustible, $M$ may be written as a colimit $M = \varinjlim N$ of compactly supported (relative to $Y$) objects $N$. By part (i) both sides of \eqref{eq:Mbasechg} commute with colimits in $M$. Hence, we reduce to the case when $M = N$ is compactly supported, which is Proposition \ref{prop:compactlysupported}(ii) above. 
\end{proof}
\subsection{The excision sequence}\label{subsubsec:domainembeddings}
\begin{prop}\label{prop:hlowershriek1}
Suppose that $V \subseteq X$ is an admissible open subset such that $k: V \hookrightarrow X$ is a quasi-compact morphism. Assume that $U:= X \setminus V$ is an admissible open subset and assume that the inclusion $h:U \hookrightarrow X$ is exhaustible (Definition \ref{defn:exhaustible}). Then for any $M \in \operatorname{QCoh}(X)$ there is a fiber-cofiber sequence
\begin{equation}
    h_! h^*M \to M \to k_*k^*M.
\end{equation}
\end{prop}
\begin{rmk}\label{rmk:exhaustiblesufficient}
By Corollary \ref{cor:exhaustiblecriterion} and \cite[Example 8.1.15]{HuberEtale}, the assumption in Proposition \ref{prop:hlowershriek1} that $h: U \hookrightarrow X$ is exhaustible, is automatically satisfied whenever $X$ is quasi-paracompact and separated. 
\end{rmk}
Before the proof of Proposition \ref{prop:hlowershriek1}, let us give an example of its application.
\begin{example}
Let $X := \mathbb{D}^1_K = \operatorname{Sp}(K \langle T \rangle^\dagger)$ be the (dagger) unit disk over $K$, and let $U := \mathring{\mathbb{D}}^1_K$ be the open unit disk. Set $V:= \operatorname{Sp}(K \langle T^{-1}, T\rangle^\dagger)$, and let $h: U \to X, k :V \to X$ be the inclusions. One has $U = X \setminus V$ and $k$ is a quasi-compact morphism. Let $1_U \in \operatorname{QCoh}(U)$ be the tensor unit. Since the pullback functors are symmetric monoidal, by Proposition \ref{prop:hlowershriek1}(ii), we obtain a fiber sequence
\begin{equation}
    h_!1_U \to K \langle T \rangle^\dagger \to K\langle T^{-1}, T \rangle^\dagger,
\end{equation}
in other words 
\begin{equation}
h_! 1_U \simeq \operatorname{Fib}(K \langle T \rangle^\dagger \to K\langle T^{-1}, T \rangle^\dagger) \simeq (K \langle T^{-1}, T\rangle^\dagger/ K \langle T \rangle^\dagger)[-1],
\end{equation}
in $\operatorname{QCoh}(X)$. This lives in degree $1$ only, which reflects the fact that there are no compactly supported global holomorphic functions on $\mathring{\mathbb{D}}^1_K$. 
\end{example}
\begin{lem}\label{lem:kan1blah}
With notations as in Proposition \ref{prop:hlowershriek1}. There is an equivalence of $\infty$-categories between:
\begin{enumerate}[(i)]
    \item The $\infty$-category of all compactly supported (relative to $X$) objects $L \in \operatorname{QCoh}(U)$ with a map to $h^*N$. 
    \item The $\infty$-category of all objects $F \in \operatorname{QCoh}(X)$ with a map to $N$, such that the support of $F$ is contained in $c(U/X)$.
\end{enumerate}
We note additionally that for the objects $F$ appearing in (ii), the unit morphism $F \to h_*h^*F$ is an equivalence. 
\end{lem}
\begin{proof}
The equivalence is via ``extension by zero". If $[F \to N]$ belongs to the category in (ii) then certainly $[h^*F \to h^*N]$ belongs to the category in (i). By descent, this functor from (ii) to (i) is fully-faithful, because of the support condition on $F$.  

On the other hand, if $L \in \operatorname{QCoh}(U)$ then there exists $Z \in c(U/X)$ such that the pullback $i^*L \simeq 0$, where $i: U \setminus Z \to U$ is the inclusion. By descent applied to the admissible covering $(U \setminus Z \cup V) \cup U$ of $X$, there exists $F \in \operatorname{QCoh}(X)$ whose pullback to $(U \setminus Z \cup V)$ is nullhomotopic and such that $h^*F \simeq L$. This is equipped with a map $F \to N$ obtained by gluing\footnote{Here we implicitly used descent, and the fact that the mapping space in a limit of categories, is the limit of the mapping spaces.} $h^*F \simeq L \to h^*N$ with the morphism from $0$. Therefore, the functor from (ii) to (i) constructed in the previous paragraph is essentially surjective. 

The very last sentence is also a consequence of descent.
\end{proof}
\begin{proof}[Proof of Proposition \ref{prop:hlowershriek1}]
The first task is to produce a natural transformation $h_!h^* \to \operatorname{id}$. To this end, we recall that, by the coend formula, for $M \in \operatorname{QCoh}(X)$ one has 
\begin{equation}
    h_!h^*M  \simeq \varinjlim_{L \to h^*N} h_*L, 
\end{equation}
where the colimit runs over all compactly supported (relative to $X$) objects $L \in \operatorname{QCoh}(U)$ with a map to $h^*N$. By Lemma \ref{lem:kan1blah} above this can equivalently be written as 
\begin{equation}
    \varinjlim_{F \to N} h_*h^*F \simeq  \varinjlim_{F \to N} F
\end{equation}
where the colimit runs over the category of all objects $F \in \operatorname{QCoh}(X)$ with a map to $N$ such that the support of $F$ is contained in $c(U/X)$. Then, of course, there is a natural map 
\begin{equation}
    \varinjlim_{F \to N} F \to N,
\end{equation}
which gives the desired natural transformation $h_!h^*N \to N$.

By naturality of the unit transformation $\operatorname{id} \to k_*k^*$ and the base-change of Proposition \ref{prop:lowershriekbasechange}, there is a commutative (fracture) square:
\begin{equation}
    % https://q.uiver.app/#q=WzAsNCxbMCwwLCJoXyFoXipNIl0sWzEsMCwiMCJdLFswLDEsIk0iXSxbMSwxLCJrXyprXipNIl0sWzAsMV0sWzAsMl0sWzIsM10sWzEsM11d
    \begin{tikzcd}[cramped]
	{h_!h^*M} & 0 \\
	M & {k_*k^*M}
	\arrow[from=1-1, to=1-2]
	\arrow[from=1-1, to=2-1]
	\arrow[from=1-2, to=2-2]
	\arrow[from=2-1, to=2-2]
\end{tikzcd}
\end{equation}
which yields a natural transformation \begin{equation}\label{eq:excisionFib}
    h_!h^*M \to \operatorname{Fib}(M \to k_*k^*M).
\end{equation}
To check that \eqref{eq:excisionFib} is an equivalence, it suffices to do so after applying $t^*$ for $t: X^\prime \to X$ ranging over an admissible affinoid covering. Since both sides commute with restrictions, this allows us to reduce the claim to the case where $X$ is affinoid. Then $V \subset X$ is a special admissible open subset in the sense of \cite{SchneiderPoints}. Then by \cite[Lemma 2.5.1]{vanderPutetale} there exists a finite affinoid cover $\{X_i\}_i$ of $X$ such that $X_i \cap V$ is a finite union of Weierstrass domains defined by invertible functions. Thus by the commutation with restrictions again, we may reduce to this case, i.e., $X$ is affinoid and $V \subset X$ is a finite union of Weierstrass domains defined by invertible functions. Let us say $V = \bigcup_{i=1}^m V_i$ and 
\begin{equation}
V_i = \{x \in X: |f_{ij}(x)| \le 1 \text{ for all }1 \le j \le n_i\},  
\end{equation}
for functions $f_{i1}, \dots, f_{in_i}$ invertible on $V_i$. For $\varrho \in \sqrt{|K^\times|}, \varrho >1$, we set
\begin{equation}
\begin{aligned}
V_i(\varrho) &:= \{x \in X: |f_{ij}(x)| \le \varrho \text{ for all }1 \le j \le n_i\}, \\ 
\mathring{V}_i(\varrho) &:= \{x \in X: |f_{ij}(x)| < \varrho \text{ for all }1 \le j \le n_i\},
\end{aligned}
\end{equation}
note that 
\begin{equation}
X \setminus \mathring{V}_i(\varrho) = \{x \in X: |f_{ij}(x)| \ge \varrho \text{ for some }1 \le j \le n_i\},
\end{equation}
is a finite union of rational domains. We define 
\begin{equation}
\begin{aligned}
    V := \bigcup_{i=1}^m V_i,  &&  V(\varrho) := \bigcup_{i=1}^m V_i(\varrho), && \mathring{V}(\varrho) := \bigcup_{i=1}^m \mathring{V}_i(\varrho).
\end{aligned}
\end{equation}
By \cite[Lemma 2.3.1]{vanderPutetale}, $\{V(\varrho)\}_{\varrho > 1}$ is a cofinal system of strict neighbourhoods of $V$ in $X$. Recall that $U := X \setminus V$ and $h:U \to X$ is the inclusion. Note that $X \setminus V(\varrho) \in c(U/X)$ since $V(\varrho) \to X$ is a quasi-compact morphism and $X \setminus V(\varrho)$ has the quasi-compact strict neighbourhood $X \setminus \mathring{V}(\varrho^\frac{1}{2})$ in $U$. Let $k_\varrho: V(\varrho) \to X$ be the inclusions. The sheaf property (plus the fact that $X \setminus V(\varrho) \in c(U/X)$), implies that for any $\varrho > 1$, there is a natural equivalence
\begin{equation}
\begin{aligned}
    \operatorname{Fib}(M \to k_{\varrho,*}k_\varrho^*M) &\xrightarrow[]{\sim} h_*h^*\operatorname{Fib}(M \to k_{\varrho,*}k_\varrho^*M)\\
    &= h_!h^*\operatorname{Fib}(M \to k_{\varrho,*}k_\varrho^*M).
\end{aligned}
\end{equation}
By Lemma \ref{lem:cofinalnbhd} below, the natural morphism
\begin{equation}\label{eq:overconvergentclaim}
\varinjlim_{\varrho > 1}k_{\varrho, *}k_{\varrho}^* \to k_* k^*,
\end{equation}
is an equivalence. Hence using that $h_!$ commutes with colimits we obtain a natural equivalence
\begin{equation}
\begin{aligned}\label{eq:counit1}
    \operatorname{Fib}(M \to k_*k^*M) &\simeq h_! h^*\operatorname{Fib}(M \to k_*k^*M) \\
    &= h_!h^*M,
\end{aligned}
\end{equation}
where in the last line we used that $h^*k_* \simeq 0$, by base-change. Hence we have proven the Proposition modulo the following Lemma.
\end{proof}
\begin{lem}\label{lem:cofinalnbhd}
Suppose that $X$ is affinoid and $V \subset X$ is a finite union of Weierstrass domains, defined by invertible functions. Let $\{V(\varrho)\}_\varrho$ be any cofinal system of quasi-compact strict neighbourhoods of $V$ in $X$ and let $k_\varrho: V(\varrho) \to X$ be the inclusions. Then the natural morphism
\begin{equation}
\varinjlim_{\varrho > 1}k_{\varrho, *}k_{\varrho}^* \to k_*k^*,  
\end{equation}
is an equivalence. 
\end{lem}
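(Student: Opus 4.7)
\emph{Reduction to the structure sheaf.} First observe that $k: V \to X$ is quasi-compact: since $X$ is separated and each $V_i$ is affinoid, the preimage under $k$ of any quasi-compact admissible open $U \subset X$ is the finite union $\bigcup_i (U \cap V_i)$ of affinoids. Proposition~\ref{prop:pushforward} thus gives the projection formula for both $k$ and each (quasi-compact) $k_\varrho$, yielding natural isomorphisms
\[
k_{\varrho,*}\mathbf{L}k_\varrho^* M \cong k_{\varrho,*}\mathcal{O}_{V(\varrho)} \widehat{\otimes}^{\mathbf{L}} M, \qquad k_*\mathbf{L}k^* M \cong k_*\mathcal{O}_V \widehat{\otimes}^{\mathbf{L}} M.
\]
Since $\widehat{\otimes}^{\mathbf{L}}$ preserves colimits in each variable, it suffices to produce a natural isomorphism $\varinjlim_\varrho k_{\varrho,*}\mathcal{O}_{V(\varrho)} \xrightarrow{\sim} k_*\mathcal{O}_V$ in $\operatorname{QCoh}(X)$.

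\emph{Reduction to a single Weierstrass domain.} Apply \v{C}ech descent along the finite covers $V = \bigcup_{i=1}^m V_i$ and $V(\varrho) = \bigcup_i V_i(\varrho)$: this expresses both $k_*\mathcal{O}_V$ and $k_{\varrho,*}\mathcal{O}_{V(\varrho)}$ as \emph{finite} limits of pushforwards of structure sheaves on the multifold overlaps. Since $\operatorname{QCoh}(X)$ is stable, filtered colimits commute with these finite limits, so the claim reduces to an isomorphism on each overlap $V_{i_1} \cap \cdots \cap V_{i_r}$. Every such overlap is again a Weierstrass subdomain of $X$ defined by invertible functions (the combined $f_{i_s j}$), so we may assume $V$ is itself a single Weierstrass subdomain defined by invertible $f_1,\ldots,f_n$, with cofinal system $V(\varrho) = \{|f_j| \le \varrho\}$ for $\varrho > 1$ (since the statement is invariant under passage to a cofinal subsystem).

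\emph{The single-domain case.} Now both sides are concentrated in degree zero: under the equivalence $\operatorname{QCoh}(X) \simeq N(Ch(\mathsf{Mod}(\mathcal{O}_X(X))))[W^{-1}]$ they correspond, via Proposition~\ref{prop:pushforward}, to the $\mathcal{O}_X(X)$-algebras $\mathcal{O}_X(V)$ and $\mathcal{O}_X(V(\varrho))$ viewed with their canonical bornologies. But by the very definition of dagger algebras,
\[
\mathcal{O}_X(V) \;=\; \mathcal{O}_X(X)\langle \underline{f}\rangle^\dagger \;=\; \varinjlim_{\varrho > 1} \mathcal{O}_X(X)\langle \underline{f}/\varrho\rangle \;=\; \varinjlim_{\varrho > 1} \mathcal{O}_X(V(\varrho))
\]
as a bornological algebra, and the transition maps in the colimit are injective. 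The main technical obstacle is to show that this monomorphic filtered colimit, computed in the quasi-abelian category $\mathsf{Mod}(\mathcal{O}_X(X))$, agrees with the $\infty$-categorical colimit in $\operatorname{QCoh}(X)$; this reduces to the fact, recalled in Appendix~\ref{sec:appendixbornological}, that monomorphic filtered colimits are strictly exact in $\mathsf{CBorn}_K$, so the underived colimit is already a homotopy colimit. Combined with the previous two reductions, this yields the desired isomorphism.
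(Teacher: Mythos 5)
Your proof is correct in outline but takes a genuinely different reduction than the paper. Where the paper invokes van der Put's result (\cite[Lemma 2.5.2]{vanderPutetale}) to produce a finite rational cover $\{X_i\}$ of $X$ with $X_i \cap V$ rational and then restricts along the $X_i$, you instead apply \v{C}ech descent directly to the finite cover $V = \bigcup V_i$ by Weierstrass domains, using that overlaps of Weierstrass domains are again Weierstrass and that a filtered colimit commutes with the finite totalization. Both reductions are valid; yours is a little more self-contained, as it avoids the external input from van der Put--de~Jong at this step (the paper in any case uses \cite[Lemma 2.3.1]{vanderPutetale} for cofinality of the standard $\varrho$-family, and you are implicitly relying on this too when you pass to your preferred cofinal subsystem). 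Your initial step of reducing to the structure sheaf via the projection formula, and then using that $\widehat{\otimes}^{\mathbf{L}}$ commutes with colimits, is a clean reformulation; the paper works at the level of the functors $k_{\varrho,*}\mathbf{L}k_\varrho^*$ throughout but the content is the same.

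Two gaps, both minor. First, the fact that the monomorphic filtered colimit of dagger algebras already computes the homotopy colimit is \emph{not} in Appendix~\ref{sec:appendixbornological}; the paper cites \cite[\S2]{ProsmansHomological} for this. Second, you write $\mathcal{O}_X(V) = \varinjlim_\varrho \mathcal{O}_X(V(\varrho))$ by fiat, but these are quotients of (relative) Washnitzer algebras by the ideals $(T_j - f_j)$, and one needs to know that the colimit identity for the free algebras $K\langle T/\varrho\rangle^\dagger$ survives base change along $A\widehat{\otimes}_K -$ and the passage to the quotient in the derived category. The paper handles this by observing that both $K\langle T\rangle^\dagger$ and the $K\langle T/\varrho\rangle^\dagger$ are flat with respect to $\widehat{\otimes}_K$, so that the underived tensor and quotient agree with the derived ones and the colimit identity persists in $\operatorname{QCoh}(X)$. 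You should make this explicit.
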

\begin{proof}
In this situation the proof of \cite[Lemma 2.5.2]{vanderPutetale} shows that there is a finite (rational) cover $\{X_i\}$ of $X$ such that $W_i := X_i \cap V$ is rational for all $i$. Let $g: W_i \to X_i$ and $k_{\varrho,i}: V(\varrho) \cap X_i \to X_i$ be the inclusions. By the commutation with restrictions again we are reduced to proving that the natural morphism
\begin{equation}\label{eq:rationalvarrho}
    \varinjlim_{\varrho > 1}k_{\varrho,i, *}k_{\varrho,i}^* \to  g_* g^*,
\end{equation}
is an equivalence. Note that $\{V(\varrho) \cap X_i\}_\varrho$ is a cofinal system of strict neighbourhoods of $W_i$ in $X_i$. Say $X_i= \operatorname{Sp}(A)$ and $W_i = X_i\left(\frac{f_1, \dots, f_m}{f_0}\right) = \operatorname{Sp}(A_{W_i})$ for $f_0,\dots, f_m \in A$ without common zero. For $\varrho \in \sqrt{|K^\times|}, \varrho > 1$, let $W_i(\varrho) := X_i\left(\frac{f_1, \dots, f_m}{\varrho f_0}\right) = \operatorname{Sp}(A_{W_i(\varrho)})$, and let $g_\varrho: W_i(\varrho) \to X$ be the inclusions. Then by \cite[Lemma 2.3.1]{vanderPutetale}, $\{W_i(\varrho)\}_{\varrho > 1}$ is cofinal in the system of strict neighbourhoods of $W_i$ in $X_i$. Hence by cofinality we are reduced to proving that the natural morphism
\begin{equation}
    \varinjlim_{\varrho > 1} g_{\varrho,*}g_\varrho^* \to g_*g^*.
\end{equation}
is a equivalence. By the property of the Washnitzer algebra one has 
\begin{equation}\label{eq:washnitzercolimit}
    K\langle T_1,\dots,T_m \rangle^\dagger \cong \varinjlim_{\varrho > 1}K \langle T_1/\varrho,\dots, T_m/\varrho\rangle^\dagger, 
\end{equation}
indeed, the right side of \eqref{eq:washnitzercolimit} is a repeated colimit of Tate algebras, which returns the Washnitzer algebra again. Therefore, we obtain
\begin{equation}
\begin{aligned}
A_{W_i} &= \frac{A\langle T_1,\dots,T_m\rangle^\dagger}{(f_0T_1-f_1,\dots,f_0T_m-f_m)}  \\ &\simeq \varinjlim_{\varrho > 1}\frac{A\langle T_1/\varrho,\dots,T_m/\varrho\rangle^\dagger}{(f_0T_1-f_1,\dots,f_0T_m-f_m)} = \varinjlim_{\varrho > 1}A_{W_i(\varrho)},
\end{aligned}
\end{equation}
in $\operatorname{QCoh}(X)$, which proves \eqref{eq:rationalvarrho}.
\end{proof}
\begin{prop}\label{prop:hlowershriekadjunction}
Let $V \subset X$ be an admissible open subset such that the inclusion $k:V \hookrightarrow X$ is a quasi-compact morphism and let $h: U:= X \setminus V \to X$ be the inclusion of the complement. Then there is an adjunction
\begin{equation}
    h_! :\operatorname{QCoh}(U) \leftrightarrows \operatorname{QCoh}(X): h^*,
\end{equation}
so that $h^! \simeq h^*$ in this case. 
\end{prop}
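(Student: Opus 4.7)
The plan is to produce a natural equivalence of mapping spaces
\begin{equation*}
\operatorname{Map}_{\operatorname{QCoh}(X)}(h_! N, M) \simeq \operatorname{Map}_{\operatorname{QCoh}(U)}(N, \mathbf{L}h^* M)
\end{equation*}
for $N \in \operatorname{QCoh}(U)$ and $M \in \operatorname{QCoh}(X)$, thereby exhibiting $\mathbf{L}h^*$ as right adjoint to $h_!$; by uniqueness of right adjoints one then gets $h^! \simeq \mathbf{L}h^*$. The two inputs I intend to exploit are the two fiber sequences of Proposition \ref{prop:hlowershriek1} and the vanishing $\mathbf{L}k^* h_! N \simeq 0$ from part (i) of that proposition, which lets me kill any term supported on $V$ after mapping out of $h_! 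N$.

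First I would apply the functor $\operatorname{Map}_{\operatorname{QCoh}(X)}(h_! N, -)$ to the excision fiber sequence $h_! \mathbf{L}h^* M \to M \to k_* \mathbf{L}k^* M$ of Proposition \ref{prop:hlowershriek1}(ii). The resulting third term is $\operatorname{Map}_{\operatorname{QCoh}(V)}(\mathbf{L}k^* h_! N, \mathbf{L}k^* M)$ by the adjunction $\mathbf{L}k^* \dashv k_*$, which vanishes since $\mathbf{L}k^* h_! N \simeq 0$. This yields a natural equivalence $\operatorname{Map}(h_! N, h_! \mathbf{L}h^* M) \xrightarrow{\sim} \operatorname{Map}(h_! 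N, M)$. It therefore suffices to prove that $h_!$ is fully faithful, since specializing the full faithfulness with $P = \mathbf{L}h^* M$ and concatenating will give the desired equivalence.

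To prove that $h_!$ is fully faithful, I would apply $\operatorname{Map}_{\operatorname{QCoh}(X)}(h_! N, -)$ to the fiber sequence $h_! P \to h_* P \to k_* \mathbf{L}k^* h_* P$ coming from the formula for $h_!$ in Proposition \ref{prop:hlowershriek1}(i). The correction term vanishes for the same reason as above, so
\begin{equation*}
\operatorname{Map}_{\operatorname{QCoh}(X)}(h_! N, h_! P) \xrightarrow{\sim} \operatorname{Map}_{\operatorname{QCoh}(X)}(h_! N, h_* P) \simeq \operatorname{Map}_{\operatorname{QCoh}(U)}(\mathbf{L}h^* h_! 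N, P) \simeq \operatorname{Map}_{\operatorname{QCoh}(U)}(N, P),
\end{equation*}
where the last two steps use $\mathbf{L}h^* \dashv h_*$ and the isomorphism $\mathbf{L}h^* h_! \simeq \operatorname{id}$ recorded in \eqref{eq:hshriekunit}.

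I do not expect a difficult computation to be the main obstacle; rather, the delicate point will be a coherence check ensuring that the chain of mapping-space equivalences produced above is natural in both $N$ and $M$ in the appropriate $\infty$-categorical sense, so that it assembles into an honest adjunction rather than just a pointwise equivalence. Since every constituent arrow (the excision map, the vanishing $\mathbf{L}k^* h_! = 0$, the counit of $\mathbf{L}h^* \dashv h_*$, and the isomorphism $\mathbf{L}h^* h_! \simeq \operatorname{id}$) has already been produced functorially by the preceding results, I expect this coherence to be essentially automatic.
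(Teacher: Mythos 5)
Your proposal is correct and draws on exactly the same inputs as the paper's own proof: the two fiber sequences of Proposition~\ref{prop:hlowershriek1}, the vanishing $\mathbf{L}k^*h_! \simeq 0$, the isomorphism $\mathbf{L}h^*h_! \simeq \operatorname{id}$ from \eqref{eq:hshriekunit}, and the adjunction $\mathbf{L}h^* \dashv h_*$. What differs is the packaging. You work on the level of mapping spaces, building a chain of equivalences
\begin{equation*}
\operatorname{Map}(h_!N, M) \simeq \operatorname{Map}(h_!N, h_!\mathbf{L}h^*M) \simeq \operatorname{Map}(h_!N, h_*\mathbf{L}h^*M) \simeq \operatorname{Map}(N, \mathbf{L}h^*M),
\end{equation*}
whereas the paper constructs explicit candidate unit $\eta\colon\operatorname{id}\xrightarrow{\sim}\mathbf{L}h^*h_!$ and counit $\varepsilon\colon h_!\mathbf{L}h^*\to\operatorname{id}$ (with $\varepsilon$ coming from the excision fiber sequence) and checks the triangle identities. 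Your two steps — killing the $k_*\mathbf{L}k^*$ term after mapping out of $h_!N$, and full faithfulness of $h_!$ — are mirror images of the paper's two zig-zag verifications, so the mathematical content is the same.

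The coherence worry you flag at the end is the right thing to worry about, and it is a little more than \emph{``essentially automatic''}: producing a pointwise equivalence of mapping spaces is not by itself an adjunction, and to conclude you need the equivalence to be implemented by precomposition with an actual unit, not merely to exist object-by-object. The cleanest way to close this gap is essentially what the paper does (and what Lurie's recognition criterion, HTT 5.2.2.12, is designed for): fix the candidate unit $\eta\colon\operatorname{id}\xrightarrow{\sim}\mathbf{L}h^*h_!$, which is already a natural transformation by construction, and show that the single-variable composite $\operatorname{Map}(h_!N, M)\xrightarrow{\mathbf{L}h^*}\operatorname{Map}(\mathbf{L}h^*h_!N,\mathbf{L}h^*M)\xrightarrow{\eta_N^*}\operatorname{Map}(N,\mathbf{L}h^*M)$ is an equivalence; then the adjunction is automatic and no bivariant coherence is needed. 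Your fiber-sequence computations give exactly the input needed for this, so the proof goes through once rephrased in that form — but as written, the final assembly step deserves the extra sentence rather than being waved off as automatic.
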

\begin{proof}
By Proposition \ref{prop:hlowershriek1}(ii) we obtain a counit transformation $\varepsilon : h_! h^* \to \operatorname{id}$ given as the composite
\begin{equation}
   \varepsilon : h_! h^* M \simeq \operatorname{Fib}(M \to k_*k^*M) \to M,
\end{equation}
by base-change, there is also a natural equivalence $\eta: \operatorname{id} \simeq h^*h_!$. We need to verify the zig-zag identities. For $M \in \operatorname{QCoh}(X)$ we first need to show that the second morphism in the composite
\begin{equation}
    h^* M \xrightarrow[]{\sim \eta} h^*h_!h^*M \xrightarrow[]{h^* \circ \varepsilon}h^*M
\end{equation}
is an equivalence. But this follows from $h^*\operatorname{Fib}(M \to k_*k^*M) \xrightarrow[]{\sim} h^*M$, where we used that $h^*k_* \simeq 0$ by base change. For the second zig-zag identity we need to verify that for $N \in \operatorname{QCoh}(U)$, the second morphism in the composite
\begin{equation}
    h_!N \xrightarrow{\sim h_! \circ \eta}h_!h^*h_!N \xrightarrow[]{\varepsilon} h_!N
\end{equation}
is an equivalence. By the base-change of Proposition \ref{prop:hlowershriek1} one has $k^*h_! \simeq 0$. Hence the, second morphism is given as
\begin{equation}
   h_!h^*h_!N \simeq \operatorname{Fib}(h_!N \to k_*k^*h_!N) \simeq h_!N.
\end{equation}
\end{proof}
\subsection{Composition of the functors $f_!$.}\label{subsubsec:compositionf_!}
\begin{defn}\cite[Definition 2.5.6]{vanderPutetale}
Let $X$ be a dagger analytic variety. 
\begin{enumerate}[(i)]
    \item We say that an admissible affinoid covering $\{U_i\}_{i \in \mathcal{I}}$ of $X$ is of \emph{finite type} if, for any $i \in \mathcal{I}$, there are only finitely many $i^\prime \in \mathcal{I}$ with $U_i \cap U_{i^\prime} \neq \emptyset$. 
    \item We say that $X$ is \emph{quasi-paracompact} if $X$ admits an admissible affinoid covering of finite type.
\end{enumerate}
\end{defn}
\begin{lem}\label{lem:paracompact}
Let $X$ be a separated dagger analytic variety. The following are equivalent:
\begin{enumerate}[(i)]
    \item $X$ is quasi-paracompact,
    \item Any admissible affinoid covering of $X$ admits a refinement to a finite type covering.
\end{enumerate}
\end{lem}
\begin{proof}
Let $\{V_j\}_{j \in \mathcal{J}}$ be an arbitrary admissible affinoid covering of $X$ and let $\{U_i\}_{i \in \mathcal{I}}$ be a covering with the property (i). For each $i \in \mathcal{I}$ choose a finite subset $\mathcal{J}_0(i) \subset \mathcal{J}$ such that $\{U_i \cap V_j \}_{j \in \mathcal{J}_0(i)}$ is an admissible cover of $U_i$. We claim that the cover $\{U_i \cap V_j : i \in \mathcal{I}, j \in \mathcal{J}_0(i)\}$ has the required property. Indeed, fixing $(i,j)$, the set $S_{ij} := \{ (i^\prime, j^\prime): U_i \cap U_{i^\prime} \neq \emptyset, j^\prime \in \mathcal{J}_0(i^\prime)\}$ is finite, and one has $(U_i \cap V_j) \cap (U_{i^\prime} \cap V_{j^\prime}) \neq \emptyset$ in this covering only if $(i^\prime, j^\prime) \in S_{ij}$. 
\end{proof}
\begin{defn}\label{defn:partially proper}\cite[Definition 0.4.2]{HuberEtale}
A morphism $f: X \to Y$ of dagger analytic varieties is called \emph{partially proper} if it is separated and there exists an admissible affinoid covering $\{Y_i\}_{i \in \mathcal{I}}$ of $Y$ such that for all $i$ there exists two admissible affinoid coverings $\{X_{ij}\}_{j \in \mathcal{J}(i)}, \{X_{ij}^\prime\}_{j \in \mathcal{J}(i)}$, of $f^{-1}(Y_i)$ such that $X_{ij} \Subset_{Y_i} X_{ij}^\prime$ for all $j$. 
\end{defn}
\begin{rmk}\cite[Definition 0.4.2]{HuberEtale}
A morphism is proper if and only if it is partially proper and quasi-compact. Any relative Stein space \cite[\S2.1]{vanderPutSerre} is partially proper. 
\end{rmk}
\begin{lem}\label{lem:partiallyproperparacompact}
Suppose that $f:X \to Y$ is a partially proper morphism of separated dagger analytic varieties and $Y$ is quasi-paracompact. Then the covering $\{Y_i\}_{i \in \mathcal{I}}$ of $Y$ appearing in Definition \ref{defn:partially proper} may be taken to be of finite type. 
\end{lem}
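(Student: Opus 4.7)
The plan is to take the given admissible affinoid covering $\{Y_i\}_{i \in \mathcal{I}}$ witnessing partial properness of $f$ and refine it to a finite type covering using paracompactness, and then restrict the auxiliary coverings appearing in Definition \ref{defn:partially proper} along this refinement.

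More precisely, I would first invoke Lemma \ref{lem:paracompact}(ii): since $Y$ is paracompact, the admissible affinoid cover $\{Y_i\}_{i \in \mathcal{I}}$ admits a refinement $\{Z_k\}_{k \in \mathcal{K}}$ which is of finite type. For each $k \in \mathcal{K}$, choose some $i(k) \in \mathcal{I}$ with $Z_k \subseteq Y_{i(k)}$; since $Z_k$ is an affinoid subdomain of $Y$ contained in the affinoid subdomain $Y_{i(k)}$, it is an affinoid subdomain of $Y_{i(k)}$ by transitivity of affinoid subdomain inclusions.

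Next, I would base change the two affinoid covers $\{X_{i(k),j}\}_{j \in \mathcal{J}(i(k))}$ and $\{X^\prime_{i(k),j}\}_{j \in \mathcal{J}(i(k))}$ of $f^{-1}(Y_{i(k)})$ along $Z_k \hookrightarrow Y_{i(k)}$. Setting
\begin{equation}
W_{kj} := X_{i(k),j} \times_{Y_{i(k)}} Z_k, \qquad W^\prime_{kj} := X^\prime_{i(k),j} \times_{Y_{i(k)}} Z_k,
\end{equation}
each $W_{kj}$ and $W^\prime_{kj}$ is affinoid (as the fiber product of dagger affinoids over a dagger affinoid), and both families form admissible affinoid coverings of $f^{-1}(Z_k)$, since admissible coverings are preserved under base change by inclusions of affinoid subdomains. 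The key point to verify is that the relative compactness relation is preserved: from $X_{i(k),j} \Subset_{Y_{i(k)}} X^\prime_{i(k),j}$ one must conclude that $W_{kj} \Subset_{Z_k} W^\prime_{kj}$. This is a standard stability property of the $\Subset$ relation under base change in the affinoid/dagger-affinoid setting, and I would cite the corresponding statement in \cite{HuberEtale} or \cite{GrosseKlonneRigid} (or spell it out directly from the definition in terms of bounded functions extending the inclusion).

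The finite type refinement $\{Z_k\}_{k \in \mathcal{K}}$ together with the coverings $\{W_{kj}\}, \{W^\prime_{kj}\}$ of $f^{-1}(Z_k)$ then exhibits $f$ as partially proper with respect to a finite type admissible affinoid cover of $Y$, as required. The only genuine content beyond bookkeeping is the base change invariance of $\Subset$; the rest is a routine combination of paracompactness with transitivity of affinoid subdomain inclusions.
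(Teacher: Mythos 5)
Your proposal matches the paper's argument essentially step for step: refine to a finite-type cover via Lemma~\ref{lem:paracompact}, pick for each member of the refinement an element of the original cover containing it, base change the two auxiliary affinoid covers along this inclusion, and invoke stability of $\Subset$ under base change. The only cosmetic difference is the reference for the last step — the paper cites \cite[Lemma 1, \S 9.6.2]{BGR} where you suggest \cite{HuberEtale} or \cite{GrosseKlonneRigid} — but the content is identical.
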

\begin{proof}
According to the definition of partially proper, there exists an admissible affinoid covering $\{U_k\}_{k \in \mathcal{K}}$ of $Y$ such that for all $k$ there exists two admissible affinoid coverings $\{V_{kl}\}_{l \in \mathcal{L}(k)}, \{V_{kl}^\prime\}_{l \in \mathcal{L}(k)}$, of $f^{-1}(U_k)$ such that $V_{kl} \Subset_{U_k} V_{kl}^\prime$ for all $l$. By Lemma \ref{lem:paracompact} we may choose a finite type refinement $\{Y_i\}_{ \in \mathcal{I}}$ of $\{U_k\}_{k \in \mathcal{K}}$. Fix $i$ and choose $k = k(i)$ such that $Y_i \subset U_k$. Define $X_{il}:= V_{kl} \times_{U_k} Y_i$ and $X_{il}^\prime := V_{kl}^\prime \times_{U_k} Y_i$. By \cite[Lemma 1, \S 9.6.2]{BGR}, one has $X_{il} \Subset_{Y_i} X_{il}^\prime$, and $\{X_{il}\}_{l \in \mathcal{L}(k)}$, $\{X_{il}^\prime\}_{l \in \mathcal{L}(k)}$ are admissible coverings of $f^{-1}(Y_i)$, since coverings in sites are stable under base change.  
\end{proof}
\begin{lem}\label{lem:partiallyproperlongboring}
Suppose $f:X \to Y$ is a partially proper morphism of separated dagger analytic varieties and $Y$ is quasi-paracompact. Then every admissible open subset $U \subset X$ which is quasi-compact over $Y$, is contained in some $Z \in c(X/Y)$. 
\end{lem}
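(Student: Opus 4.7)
The plan is to construct $Z$ explicitly from the partial-properness data of $f$. Applying Lemma \ref{lem:partiallyproperparacompact}, we obtain a finite-type affinoid cover $\{Y_i\}_{i \in \mathcal{I}}$ of $Y$ together with, for each $i$, two admissible affinoid coverings $\{X_{ij}\}_{j \in \mathcal{J}(i)}$ and $\{X_{ij}^\prime\}_{j \in \mathcal{J}(i)}$ of $f^{-1}(Y_i)$ satisfying $X_{ij} \Subset_{Y_i} X_{ij}^\prime$. Since $Y_i$ is affinoid (hence quasi-compact) and $U \to Y$ is quasi-compact, the admissible open $U \cap f^{-1}(Y_i)$ is quasi-compact, so admissibility of the cover $\{X_{ij}\}_j$ of $f^{-1}(Y_i)$ yields a finite $\mathcal{J}_0(i) \subset \mathcal{J}(i)$ with $U \cap f^{-1}(Y_i) \subset \bigcup_{j \in \mathcal{J}_0(i)} X_{ij}$. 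I then set
\begin{equation*}
Z := \bigcup_{i,\, j \in \mathcal{J}_0(i)} X_{ij}, \qquad Z^\prime := \bigcup_{i,\, j \in \mathcal{J}_0(i)} X_{ij}^\prime,
\end{equation*}
so that $U \subset Z \subset Z^\prime$ by construction.

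The key technical input is a local finiteness principle: for any affinoid $W \to X$, only finitely many $i \in \mathcal{I}$ satisfy $f^{-1}(Y_i) \cap W \neq \emptyset$ (by quasi-compactness of $W$ and admissibility of the pullback cover $\{f^{-1}(Y_i) \cap W\}_i$), and each $\mathcal{J}_0(i)$ is finite, so only finitely many $X_{ij}$ and $X_{ij}^\prime$ appearing in the definitions of $Z$ and $Z^\prime$ can meet $W$. This immediately implies that $Z$ and $Z^\prime$ are admissible open, being locally finite unions of affinoid subdomains. The quasi-compactness of $Z^\prime \to Y$ follows by an analogous argument applied to a quasi-compact admissible $W \subset Y$: quasi-compactness of $W$ bounds $W$ inside finitely many $Y_{i^\prime}$, the finite-type property of $\{Y_i\}$ then bounds the $i$ with $Y_i \cap (\bigcup_{i^\prime} Y_{i^\prime}) \neq \emptyset$ to a finite set, and each $X_{ij}^\prime \cap f^{-1}(W \cap Y_i)$ is quasi-compact as a preimage of a quasi-compact admissible open under the affinoid morphism $X_{ij}^\prime \to Y_i$.

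The main obstacle is verifying the remaining two conditions of Definition \ref{def:relcompact}: that $\{X \setminus Z, Z^\prime\}$ is an admissible cover of $X$ (i.e. $Z^\prime$ is a strict neighbourhood of $Z$) and that the inclusion $X \setminus Z \hookrightarrow X$ is a quasi-compact morphism. This is where the strict neighbourhood relations $X_{ij} \Subset_{Y_i} X_{ij}^\prime$ become indispensable. I plan to argue locally: inside each $f^{-1}(Y_i)$, the relation $\bigcup_{j \in \mathcal{J}_0(i)} X_{ij} \Subset_{Y_i} \bigcup_{j \in \mathcal{J}_0(i)} X_{ij}^\prime$ supplies an intermediate strict neighbourhood whose set-theoretic difference with $\bigcup_{j} X_{ij}$ is a finite union of rational subdomains (an ``annular'' region, defined by finitely many inequalities of the form $\rho_1 \le |f| \le \rho_2$). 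For an arbitrary affinoid $W \subset X$, combining these local annular pieces across the finitely many relevant $i$ via local finiteness gives that $W \setminus Z$ is itself a finite union of rational subdomains of $W$, hence a quasi-compact admissible open. The paracompactness of $Y$ is essential precisely because it is what globalises these local strict-neighbourhood pictures via the finite-type cover.
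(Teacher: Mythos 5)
Your construction of $Z'$ and the extraction of the finite subsets $\mathcal{J}_0(i)$ match the paper's setup, and your use of the finite-type property of $\{Y_i\}$ to get local finiteness of the construction (and hence quasi-compactness of $Z' \to Y$) is on the right track. But there is a genuine gap in your choice of $Z$: you set $Z := \bigcup_{i, j \in \mathcal{J}_0(i)} X_{ij}$, a union of affinoid subdomains, and this $Z$ will in general fail both remaining conditions of Definition \ref{def:relcompact}. The problem is that the complement of a closed affinoid inside a strictly larger one is open at its inner boundary. Concretely, if $X_{ij} = \{|T| \le 1\}$ sits inside $X_{ij}' = \{|T| \le \varrho\}$ with $\varrho > 1$, then $X_{ij}' \setminus X_{ij} = \{1 < |T| \le \varrho\}$, which is \emph{not} a finite union of rational subdomains and is not quasi-compact: the inner inequality is strict, not $\ge$. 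Your parenthetical ``defined by finitely many inequalities of the form $\rho_1 \le |f| \le \rho_2$'' mis-describes exactly this set, and the intended ``annular'' argument for quasi-compactness of $X \setminus Z \hookrightarrow X$ collapses at this point.

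The paper's fix is to \emph{enlarge} each $X_{ij}$ to a wide-open $Z_{ij} := \{x \in X_{ij}' : |t_k(x)| < \varrho_{ij} \text{ for all } k\}$, exploiting $X_{ij} \Subset_{Y_i} X_{ij}'$ to choose the $\varrho_{ij}$ and topological generators $t_k$, and then sets $Z := \bigcup_{i,j} Z_{ij}$. With this fattened $Z$, the complement $X_{ij}' \setminus Z_{ij} = \{|t_k(x)| \ge \varrho_{ij} \text{ for some } k\}$ \emph{is} a finite union of rational subdomains, hence quasi-compact admissible, and the inclusion $X \setminus Z \hookrightarrow X$ becomes a quasi-compact morphism. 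The paper also needs an auxiliary affinoid $\widetilde{Z}_{ij} := \{|t_k(x)| \le \varrho_{ij}^{1/2}\}$ sandwiched between $Z_{ij}$ and $X_{ij}'$ to verify the admissibility of the covering $\{X \setminus Z, Z'\}$ via \cite[Lemma 1]{vanderPutSerre}; your proposal does not supply such an intermediate affinoid. So the overall architecture of your argument is close, but the identity of $Z$ must change, and that change is not cosmetic — it is the entire content that makes the complement quasi-compact.
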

\begin{proof}
According to Lemma \ref{lem:partiallyproperparacompact}, there exists a finite type affinoid covering $\{Y_i\}_{i \in \mathcal{I}}$ such that for each $i$, there exists two coverings $\{X_{ij}\}_{j \in \mathcal{J}(i)}$, $\{X_{ij}\}_{j \in \mathcal{J}(i)}$ such that $X_{ij} \Subset_{Y_i} X^\prime_{ij}$ for all $j \in \mathcal{J}(i)$. Since $U$ is quasi-compact over $Y$ then $f^{-1}(Y_i) \cap U$ is quasi compact for all $i$. In particular there exists a finite subset $\mathcal{J}_0(i) \subset \mathcal{J}(i)$ such that $\{X_{ij}\}_{j \in \mathcal{J}_0(i)}$, $\{X_{ij}^\prime\}_{j \in \mathcal{J}_0(i)}$ cover $f^{-1}(Y_i) \cap U$. Since $X_{ij} \Subset_{Y_i} X_{ij}^\prime$ there exists $\varrho_{ij} \in \sqrt{|K^\times|}$, $\varrho_{ij} < 1$, and a finite affinoid generating system $\{t_k\}_k$ for $\mathcal{O}(X_{ij}^\prime)$ over $\mathcal{O}(Y_{i})$ such that 
\begin{equation}
    X_{ij} \subset Z_{ij} := \{ x \in X_{ij}^\prime : |t_k(x)| < \varrho_{ij} \text{ for all }k\}.
\end{equation}
Note that
\begin{equation}\label{eq:setminusZijdefn}
 X_{ij}^\prime \setminus Z_{ij} = \{ x \in X_{ij}^\prime : |t_k(x)| \ge \varrho_{ij} \text{ for some }k\},
\end{equation}
is a quasi-compact admissible open subset of $X_{ij}$. We also define the affinoid subdomain
\begin{equation}
    \widetilde{Z}_{ij} := \{ x \in X^\prime_{ij} : |t_k(x) | \le \varrho^{1/2}_{ij} \text{ for all } k\}
\end{equation}
note that $X_{ij} \subset \widetilde{Z}_{ij} \Subset_{Y_i} X^\prime_{ij}$. Define also
\begin{equation}
\begin{aligned}
  Z_i := \bigcup_{j \in \mathcal{J}_0(i)} Z_{ij}, &&Z^\prime_i := \bigcup_{j \in \mathcal{J}_0(i)} X^\prime_{ij},
\end{aligned}
\end{equation}
then $f^{-1}(Y_i) \cap U \subset Z_i$. We claim that $f^{-1}(Y_i) \setminus Z_{i^\prime}$ is an admissible open subset and $\{f^{-1}(Y_i) \setminus Z_{i^\prime}, Z_{i^\prime}^\prime \cap f^{-1}(Y_i)\}$ is an admissible covering of $f^{-1}(Y_i)$, for any $i^\prime \in \mathcal{I}$. It suffices to show that $(f^{-1}(Y_i) \setminus Z_{i^\prime}) \cap X_{ij} = \bigcap_{j^\prime \in \mathcal{J}_0(i)}X_{ij} \setminus Z_{i^\prime j^\prime}$ is an admissible open subset and that 
\begin{equation}
\left\{\bigcap_{j^\prime \in \mathcal{J}_0(i)}X_{ij} \setminus Z_{i^\prime j^\prime}, \bigcup_{j^\prime \in \mathcal{J}_0(i)} X_{ij} \cap X_{i^\prime j^\prime}^\prime\right\} 
\end{equation}
is an admissible covering, for each $j \in \mathcal{J}(i)$. Suppose that we have shown that $X_{ij}\setminus Z_{i^\prime j^\prime} \subset X_{ij}$ is admissible and $\{X_{ij}\setminus Z_{i^\prime j^\prime}, X_{ij} \cap X_{i^\prime j^\prime}^\prime\}$ is an admissible covering, for each $j^\prime \in \mathcal{J}_0(i)$. Then $\{ X_{ij} \setminus Z_{i^\prime j^\prime}, X_{ij} \cap Z_{i^\prime}^\prime\}$ would be an admissible covering, $X_{ij} \setminus Z_{i^\prime} = \bigcap_{j^\prime \in \mathcal{J}_0(i)} X_{ij} \setminus Z_{i^\prime j^\prime}$ would be an admissible open subset, and $\{X_{ij} \setminus Z_{i^\prime}, Z^\prime_{i^\prime}\}$ would be an admissible covering since it is refined by $\bigcap_{j^\prime \in \mathcal{J}_0(i)}\{X_{ij} \setminus Z_{i^\prime j^\prime}, Z^\prime_{i^\prime}\}$.

Hence to prove the claim, we have reduced to showing that $X_{ij}\setminus Z_{i^\prime j^\prime} \subset X_{ij}$ is admissible and $\{X_{ij}\setminus Z_{i^\prime j^\prime}, X_{ij} \cap X_{i^\prime j^\prime}^\prime\}$ is an admissible covering. We first note that
\begin{equation}
\begin{aligned}
    X_{ij} \cap X_{i^\prime j^\prime} &\Subset_{X_{ij} \times Y_{i^\prime}} X_{ij} \cap X_{i^\prime j^\prime}^\prime, \\
    X_{ij} \cap \widetilde{Z}_{i^\prime j^\prime} &\Subset_{X_{ij} \times Y_{i^\prime}} X_{ij} \cap X_{i^\prime j^\prime}^\prime,
\end{aligned}
\end{equation}
where the map $X_{ij} \cap X_{i^\prime j^\prime}^\prime \to X_{ij} \times Y$ is given by the composite 
\begin{equation}
X_{ij} \cap X_{i^\prime j^\prime} \subset X_{ij} \times X_{i^\prime j^\prime}^\prime \xrightarrow[]{(\operatorname{id},f)} X_{ij} \times Y.
\end{equation}
Hence by \cite[Lemma 1]{vanderPutSerre}, $\{f^{-1}(Y_i)\setminus X_{i^\prime j^\prime}, X_{i^\prime j^\prime}^\prime\}$ and $\{f^{-1}(Y_i)\setminus \widetilde{Z}_{i^\prime j^\prime}, X_{i^\prime j^\prime}^\prime\}$ are admissible coverings of $f^{-1}(Y_i)$, so $X_{ij} = (X_{ij}\setminus X_{i^\prime j^\prime}) \cup (X_{ij} \cap X_{i^\prime j^\prime}^\prime)$ is an admissible cover of $X_{ij}$. But since $Z_{i^\prime j^\prime} \subset X_{i^\prime j^\prime}$ one has 
\begin{equation}
    X_{ij}\setminus Z_{i^\prime j^\prime} = (X_{ij}\setminus X_{i^\prime j^\prime}) \cup (X_{ij} \cap (X_{i^\prime j^\prime}^\prime \setminus Z_{i^\prime j^\prime}))
\end{equation}
is an admissible covering by admissible open subsets, c.f. \eqref{eq:setminusZijdefn}. Hence $X_{ij}\setminus Z_{i^\prime j^\prime}$ is admissible. Furthermore $\{X_{ij} \setminus Z_{i^\prime j^\prime}, X_{ij} \cap X^\prime_{i^\prime j^\prime}\}$ is an admissible covering since it is refined by $\{X_{ij}\setminus \widetilde{Z}_{i^\prime j^\prime}, X_{ij} \cap X^\prime_{i^\prime j^\prime}\}$. This proves the claim. 

Moreover, considering the intersection of $X_{ij} \setminus Z_{i^\prime j^\prime}$ with the admissible covering $\{X_{ij}\setminus X_{i^\prime j^\prime}, X_{ij} \cap X_{i^\prime j^\prime}^\prime\}$ of $X_{ij}$ shows that the morphism $X_{ij} \setminus Z_{i^\prime j^\prime} \to X_{ij}$ is quasi-compact. This shows that $f^{-1}(Y_i)\setminus Z_{i^\prime j^\prime} \to f^{-1}(Y_i)$ is quasi-compact and hence $f^{-1}(Y_i) \setminus Z_{i^\prime} \to f^{-1}(Y_i)$ is a quasi-compact morphism. We now set
\begin{equation}
    \begin{aligned}
        Z:= \bigcup_{i \in \mathcal{I}}Z_i, && Z^\prime := \bigcup_{i \in \mathcal{I}}Z^\prime_i,
    \end{aligned}
\end{equation}
to finish the proof of the Lemma we need to show that:
\begin{enumerate}[(i)]
\label{item:adm1} \item $\{X \setminus Z, Z^\prime\}$ is an admissible covering of $X$;
    \item The composite $Z^\prime \xrightarrow[]{j} X \xrightarrow[]{f} Y$ is quasi-compact;
    \item $X \setminus Z \to X$ is a quasi-compact morphism.
\end{enumerate}
Recall that we chose the covering $\{Y_i\}_{i \in \mathcal{I}}$ to be of finite type and hence for each $i \in \mathcal{I}$ there is a finite subset $S_i \subset \mathcal{I}$ such that $U_i \cap U_{i^\prime} \neq \emptyset$ if and only if $i^\prime \in S_i$. Hence since $Z_i, Z_i^\prime \subset f^{-1}(Y_i)$, one has for each $i$ that 
\begin{equation}\label{eq: intersectfminus1}
    \begin{aligned}
        f^{-1}(Y_i)\setminus Z = \bigcap_{i^\prime \in S_i} f^{-1}(Y_i) \setminus Z_{i^\prime}, && Z^\prime \cap f^{-1}(Y_i) = \bigcup_{i^\prime \in S_i}Z^\prime_{i^\prime} \cap f^{-1}(Y_i), 
    \end{aligned}
\end{equation}
this shows that $X \setminus Z$ and $Z^\prime$ are admissible. Moreover we showed above that $f^{-1}(Y_i) \setminus Z_{i^\prime} \to f^{-1}(Y_i)$ is a quasi-compact morphism for each $i^\prime$, so by \eqref{eq: intersectfminus1} we see that $f^{-1}(Y_i) \setminus Z \to f^{-1}(Y_i)$ is a quasi-compact morphism and hence $X\setminus Z \to X$ is a quasi-compact morphism. 

We have shown that for each $i^\prime$, $\{f^{-1}(Y_i) \setminus Z_{i^\prime}, Z^\prime_{i^\prime}\}$ is an admissible cover. Then $\{f^{-1}(Y_i)\setminus Z_{i^\prime},Z^\prime \cap f^{-1}(Y_i)\}$ is an admissible cover and hence $\{f^{-1}(Y_i)\setminus Z, Z^\prime \cap f^{-1}(Y_i)\}$ is an admissible covering since it is refined by $\bigcap_{i^\prime \in S_i} \{ f^{-1}(Y_i) \setminus Z_{i^\prime}, Z^\prime \cap f^{-1}(Y_i)\}$. This shows that $\{X \setminus Z, Z^\prime\}$ is an admissible covering. 

By separatedness of $Y$, plus the fact that quasi-compact morphisms are stable under base change, $f^{-1}(Y_i) \cap f^{-1}(Y_{i^\prime}) \to f^{-1}(Y_{i^\prime})$ is a quasi-compact morphism. Taking the preimage of $Z_{i^\prime}^\prime$ we see that $Z_{i^\prime}^\prime \cap f^{-1}(Y_i)$ is quasi-compact. By \eqref{eq: intersectfminus1} this shows that $Z^\prime \cap f^{-1}(Y_i)$ is quasi-compact and hence that the composite $Z^\prime \to X \to Y$ is quasi-compact. 
\end{proof}
\begin{cor}
Let $f:X \to Y$ be a partially proper morphism of dagger analytic varieties, with $X$ and $Y$ separated and $Y$ quasi-paracompact. Then $M \in \operatorname{QCoh}(X)$ is compactly supported (relative to $Y$), if and only if there is an admissible open subset $U \subset X$, which is quasi-compact over $Y$, such that $i^*M \simeq 0$, where $i: X \setminus U \to X$ is the inclusion\footnote{By separatedness, $U \to X$ is a quasi-compact morphism, and hence since $X$ is separated then $X \setminus U \subset X$ is an admissible open subset.}. 
\end{cor}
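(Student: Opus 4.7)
I would separate the two directions, with the $(\Leftarrow)$ direction being an almost immediate consequence of Lemma \ref{lem:partiallyproperlongboring}. Indeed, given an admissible open $U \subset X$ with $U \to Y$ quasi-compact and $\mathbf{L}i^*M = 0$ for $i: X\setminus U \to X$, the lemma produces $Z \in c(X/Y)$ with $U \subset Z$. Condition (ii) of Definition \ref{def:relcompact} then tells me that $X\setminus Z$ is an admissible open subset of $X$, and the inclusion $X\setminus Z \hookrightarrow X\setminus U$ is an admissible open immersion. The factorization $X\setminus Z \hookrightarrow X\setminus U \hookrightarrow X$, together with functoriality of pullback, gives $\mathbf{L}(X\setminus Z \to X)^*M = \mathbf{L}(X\setminus Z \hookrightarrow X\setminus U)^* \circ \mathbf{L}i^* M = 0$, which is precisely the condition that $M$ is compactly supported relative to $Y$ in the sense of Definition \ref{defn:compactsupportsrelY}.

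For the direction $(\Rightarrow)$, let $Z \in c(X/Y)$ satisfy $\mathbf{L}(X\setminus Z \to X)^*M = 0$, and let $Z'$ be the strict neighbourhood furnished by Definition \ref{def:relcompact}(i), so that $Z' \to Y$ is quasi-compact. I would take $U := Z'$; it is admissible and quasi-compact over $Y$ essentially by construction. To verify $\mathbf{L}(X\setminus Z' \to X)^*M = 0$ (assuming for the moment that $X\setminus Z'$ is admissible), I invoke Lemma \ref{lem:Zprimeinclusion} to write $M \cong (Z'\to X)_*\mathbf{L}(Z'\to X)^*M$, then apply the base change isomorphism \eqref{eq:inftycatbasechange} to the Cartesian square with empty upper-left corner coming from $(X\setminus Z') \cap Z' = \emptyset$. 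This forces the pullback along $X\setminus Z' \hookrightarrow X$ to vanish.

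The main obstacle — common to both directions and to the well-posedness of the statement — is the admissibility of $X\setminus U$ in $X$, i.e.\ the content of the footnote. For this, given any admissible $U$ quasi-compact over $Y$, I would again apply Lemma \ref{lem:partiallyproperlongboring} to embed $U$ in some $Z \in c(X/Y)$, so that $X\setminus Z$ is automatically admissible, and then decompose $X\setminus U = (X\setminus Z) \cup (Z\setminus U)$. The admissibility of $Z\setminus U$ is what I expect to be technically hardest: one must exploit the explicit description of $Z$ in the proof of Lemma \ref{lem:partiallyproperlongboring} as a union of open polydisks $Z_{ij}= \{|t_k|<\varrho_{ij}\}$, and then use the quasi-compactness of $U$ over $Y$ together with a finite-type affinoid covering $\{Y_i\}$ of $Y$ from Lemma \ref{lem:paracompact} to realize each $U\cap f^{-1}(Y_i)$ as a finite union of affinoids inside $Z_i$. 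This locally reduces the admissibility of $Z\setminus U$ to Weierstrass-type covering arguments of the same flavour as those already performed in the proof of Lemma \ref{lem:partiallyproperlongboring}, and the finite-type property of $\{Y_i\}$ is what allows these local admissible coverings to globalize into a single admissible covering of $X\setminus U$.
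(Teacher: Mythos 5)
Your proposal is essentially correct and follows the route the paper intends: the corollary is stated without proof precisely because it is meant to fall out of Lemma \ref{lem:partiallyproperlongboring}. Your $(\Leftarrow)$ direction is exactly right: the lemma produces $Z \in c(X/Y)$ containing $U$, and the containment $X \setminus Z \subset X \setminus U$ together with functoriality of pullback, c.f.\ \eqref{eq:mathbbLcompose1}, gives $\mathbf{L}(X \setminus Z \to X)^*M = 0$.

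Your $(\Rightarrow)$ direction reaches the correct conclusion but takes an unnecessary detour. Having chosen $U := Z'$, you do not need Lemma \ref{lem:Zprimeinclusion} and the base change isomorphism \eqref{eq:inftycatbasechange}: since $Z \subset Z'$, one has $X \setminus Z' \subset X \setminus Z$, and the same one-line argument from the other direction, namely
\begin{equation}
\mathbf{L}(X \setminus Z' \to X)^* M \cong \mathbf{L}(X\setminus Z' \to X\setminus Z)^* \mathbf{L}(X\setminus Z \to X)^* M = 0,
\end{equation}
already does the job. The base change argument you propose is not wrong, but it silently requires the pushforward along $j: Z' \to X$ to commute with restrictions, i.e.\ that $j$ is a quasi-compact morphism; this is true (it follows from separatedness of $X \to Y$ and quasi-compactness of $Z' \to Y$), but you did not verify it, and the simple containment argument sidesteps the issue entirely.

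On the footnote, you correctly identify admissibility of $X \setminus U$ as the real technical content, and your plan — embed $U$ in some $Z \in c(X/Y)$ by the lemma, then decompose and argue locally over a finite-type covering of $Y$ using the explicit Weierstrass description of the $Z_{ij}$ — is the right shape. One remark: it is cleaner to use the decomposition $X \setminus U = (X \setminus Z) \cup (Z' \setminus U)$ coming from the admissible covering $\{X \setminus Z, Z'\}$ of $X$, rather than $(X \setminus Z) \cup (Z \setminus U)$ as you wrote, because the gluing criterion for admissibility of a subset applies directly to a restriction along an admissible covering of the ambient space; one then reduces to showing $Z' \setminus U$ is admissible open in $Z'$, which is where the finite-type and Weierstrass-domain arguments from the proof of Lemma \ref{lem:partiallyproperlongboring} enter.
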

\begin{cor}\label{cor:exhaustiblecriterion}
Let $f: X \to Y$ be a partially proper morphism of dagger analytic varieties with $X$ and $Y$ separated and $Y$ quasi-paracompact. Then $f$ is exhaustible.
\end{cor}
\begin{proof}
We need to show that $\{ Z : Z \in c(X/Y)\}$ is an admissible covering of $X$. Take an admissible covering $\{U_i\}_{i \in \mathcal{I}}$ of $X$. By separatedness, each $U_i \to X$ is a quasi-compact morphism. Hence by Lemma \ref{lem:partiallyproperlongboring}, there exists $Z_i \in c(X/Y)$ with $U_i \subseteq Z_i$. Hence $\{ Z : Z \in c(X/Y)\}$ is refined by an admissible affinoid covering, and so is admissible. 
\end{proof}
\begin{cor}\label{cor:lowershriekcompose}
    Let $g:X_1 \to X_2$ and $f: X_2 \to X_3$ be morphisms of dagger analytic varieties. Assume that $fg$ is exhaustible. Then there is a natural equivalence $(fg)_! \simeq f_! g_!$ in either of the following situations:
    \begin{enumerate}[(i)]
        \item $f$ is quasi-compact;
        \item $f$ is partially proper, $X_2$ and $X_3$ are separated and $X_3$ is quasi-paracompact. 
    \end{enumerate}
\end{cor}
\begin{proof}

    We first note that if $M \in \operatorname{QCoh}(X_1)$ is compactly supported (relative to $X_3$) then it is compactly supported relative to $X_2$. Indeed, suppose we are given $Z_1 \in c(X_1/X_3)$; it suffices to show that $Z_1 \to X_1 \to X_2$ is quasi-compact. However this follows from the fact that $f: X_2 \to X_3$ is separated and the composite $Z_1 \to X_1 \to X_2 \to X_3$ is quasi-compact. 

    We claim that if $M \in \operatorname{QCoh}(X_1)$ is compactly supported (relative to $X_3$) then $g_*M \in \operatorname{QCoh}(X_2)$ is compactly supported (relative to $X_3$). If $f$ is a quasi-compact morphism this is automatic. 
    
    Let $Z_1 \in c(X_1/X_3)$ be such that $i^*M \simeq 0$, where $i: X_1 \setminus Z_1 \to X_1$ is the inclusion. To prove the claim, by Lemma \ref{lem:partiallyproperlongboring} it suffices to show that there is an admissible open subset $U_2 \subset X_2$ with $g(Z_1) \subset U_2$, such that the composite $U_2 \to X_2 \to X_1$ is quasi-compact; indeed then one has 
    \begin{equation}
\begin{tikzcd}
	{X_1 \setminus Z_1 } & {X_1\setminus g^{-1}(U_2)} & {X_2 \setminus U_2} \\
	& {X_1} & {X_2}
	\arrow[hook, from=1-2, to=1-1]
	\arrow["{t^\prime}"', hook, from=1-2, to=2-2]
	\arrow["g", from=2-2, to=2-3]
	\arrow["{g^\prime}", from=1-2, to=1-3]
	\arrow["t"', hook, from=1-3, to=2-3]
	\arrow[from=1-1, to=2-2]
	\arrow["\lrcorner"{anchor=center, pos=0.125}, draw=none, from=1-2, to=2-3]
\end{tikzcd}
    \end{equation}
and hence by base change, one has $t^*g_*M \simeq 0$. 

So let us construct the required admissible open subset $U_2$. Let $\{X_{3,i}\}_{i \in \mathcal{I}}$ be a finite type admissible covering of $X_3$ by affinoid subdomains, and for each $i \in \mathcal{I}$ let $\{X_{2,ij}\}_{j \in \mathcal{J}(i)}$ be an admissible affinoid covering of $f^{-1}(X_{3,i})$. Since $Z_1 \in c(X_3/X_1)$ we can find an admissible open subset $Z_1^\prime \supset Z_1$ such that $Z_1^\prime$ is quasi-compact over $X_3$. Then there is a finite subset $\mathcal{J}_0(i) \subset \mathcal{J}(i)$ such that $Z_1 \cap (fg)^{-1}(X_{3,i}) \subset \bigcup_{j\in \mathcal{J}_0(i)} g^{-1}(X_{2,ij})$, or equivalently 
\begin{equation}
    g(Z_1) \cap f^{-1}(X_{3,i}) \subset \bigcup_{j \in \mathcal{J}_0(i)} X_{2,ij},
\end{equation}
now set $U_{2,i} := \bigcup_{j \in \mathcal{J}_0(i)} X_{2,ij}$ and $U_2 := \bigcup_{i \in \mathcal{I}}U_{2,i}$. Since $X_3$ is separated, for any $i,i^\prime \in \mathcal{I}$ the morphism $f^{-1}(X_{3,i}) \cap f^{-1}(X_{3,i^\prime}) \to f^{-1}(X_{3,i})$ is quasi-compact and hence $U_{2,i^\prime} \cap f^{-1}(X_{3,i})$ is quasi-compact for any $i,i^\prime$. Since the covering $\{X_{3,i}\}_{i \in \mathcal{I}}$ is of finite type, for each $i$ the set $S_i := \{i^\prime \in \mathcal{I} : U_i \cap U_{i^\prime} \neq \emptyset \}$ is finite. Therefore one has 
\begin{equation}
    U_2 \cap f^{-1}(X_{3,i}) = \bigcup_{i^\prime \in S_i} U_{2,i^\prime} \cap f^{-1}(X_{3,i}),
\end{equation}
which is quasi-compact, therefore $U_2$ is quasi-compact over $X_3$ as required. 

Now I will explain how these statements lead to the proof of the Corollary. The functor $f_!g_!$ is colimit-preserving. Since $fg$ is exhaustible, every $M \in \operatorname{QCoh}(X_1)$ may be written as a colimit $M \simeq \varinjlim N$ of compactly supported (relative to $X_3$) objects $N$. Hence, $f_! g_! M \simeq \varinjlim f_! g_!N$. Now $g_!N = g_*N$, because $N$ is compactly supported (relative to $X_2$), by the first observation. And $f_!g_*N = f_* g_*N$, because $g_*N$ is compactly supported (relative to $X_3$), by the claim. Hence,
\begin{equation}
f_!g_!M \simeq \varinjlim_{N \to M} f_*g_*N, 
\end{equation}
so that $f_! g_!$ identifies with the left Kan extension of $f_*g_* \simeq (fg)_*$ from the category of compactly supported (relative to $X_3$) objects. Hence, the desired equivalence $f_!g_! \simeq (fg)_!$ follows from Kan extension. 
\end{proof}
\subsection{Grothendieck duality}\label{sec:grothdual}
Let $f:X \to Y$ be any morphism of dagger analytic varieties. The functor $f^!$ was defined via the adjoint functor theorem for presentable $\infty$-categories, and thus far the only situation in which we have a good understanding of it is that of Proposition \ref{prop:hlowershriekadjunction}. In this section we try to understand the functor $f^!$ for more general morphisms. Let $1_{Y} \in \operatorname{QCoh}(Y)$ be the tensor unit. By the projection formula, and the counit $f_!f^! \to \operatorname{id}$, there is a morphism
\begin{equation}
    f_! (f^!1_Y\widehat{\otimes}_X f^*N) \cong f_!f^!1_Y\widehat{\otimes}_YN  \to N
\end{equation}
for any $N \in \operatorname{QCoh}(Y)$, which induces by adjunction a morphism
\begin{equation}\label{eq:grothdualmorph}
f^!1_Y\widehat{\otimes}_X f^*  \to f^!.
\end{equation}
The investigation of the conditions on $f$ which lead to \eqref{eq:grothdualmorph} being an isomorphism, is usually known as Grothendieck duality theory. We first prove a Grothendieck duality theorem for the projection from the open polydisk. The arguments of this section are very strongly inspired by those of \cite[Lecture VII, Lecture VIII]{CondensedComplexGeometry}. Key to this section is the Ind-Banach algebra
\begin{equation}
    K\langle T^{-1}, T]^\dagger := \left\{\sum_{n= - \infty}^\infty a_n T^n: \begin{array}{c} a_n \in K,  a_n = 0 \text{ for }n \gg 0, \\ a_n r^{n} \to 0 \text { as } n \to -\infty \text{ for some } r < 1. \end{array}\right\}.
\end{equation}
Let $B$ be a a dagger affinoid algebra. Let\footnote{See \cite[\S 4]{BenBassatAnalytification} for the definitions of the natural bornologies on $K[T] , K[\![T]\!], K(\!(T)\!)$, etc.} 
\begin{equation}
\begin{aligned}
    B[T] := B \widehat{\otimes}_K K[T],  &&  B \langle T^{-1}, T]^\dagger := B \widehat{\otimes}_K K \langle T^{-1}, T]^\dagger, && \text{etc.}
\end{aligned}
\end{equation} 
There is a forgetful functor 
\begin{equation}
    \operatorname{QCoh}(\operatorname{Sp}(B\langle T \rangle^\dagger)) = \operatorname{Mod}_{B\langle T \rangle^\dagger}\operatorname{QCoh}(\operatorname{Sp}(K)) \to \operatorname{Mod}_{B[T]}\operatorname{QCoh}(\operatorname{Sp}(K))
\end{equation}
which is fully-faithful, because $B[T] \to B\langle T \rangle^\dagger$ is a homotopy epimorphism \cite[Theorem 5.8]{BenBassatAnalytification}.
\begin{lem}\label{lem:formalLaurent}
With notations as above. If $M \in  \operatorname{QCoh}(\operatorname{Sp}(B\langle T \rangle^\dagger))$ dies on base-change to $B\langle T^{-1}, T\rangle$ then, when viewed as a $B[T]$-module, $M$ dies on base-change to $B \langle T^{-1}, T]^\dagger$. 
\end{lem}
\begin{proof}
By the associativity properties of $\widehat{\otimes}^\mathbf{L}$, the natural morphism
\begin{equation}
  B\langle T^{-1}, T]^\dagger \widehat{\otimes}_{B[T]}^\mathbf{L}M \xrightarrow[]{\sim} B\langle T^{-1}, T]^\dagger \widehat{\otimes}^\mathbf{L}_{B[T]} B \langle T \rangle^\dagger\widehat{\otimes}_{B\langle T\rangle^\dagger}^\mathbf{L} M  
\end{equation}
is an equivalence. In order to conclude it suffices to show that the canonical morphism 
\begin{equation}
    B\langle T^{-1}, T]^\dagger \widehat{\otimes}^\mathbf{L}_{B[T]} B \langle T \rangle^\dagger \to B\langle T^{-1}, T \rangle^\dagger
\end{equation}
is an equivalence. The object  $B\langle T^{-1}, T]^\dagger$, viewed as a $B[T]$-module, admits a flat resolution by the two-term complex 
\begin{equation}
    \Big[B\langle S \rangle^\dagger \widehat{\otimes}_K K[T] \xrightarrow[]{\times (1-ST)}  B\langle S \rangle^\dagger \widehat{\otimes}_K K[T] \Big],
\end{equation}
so that $ B\langle T^{-1}, T]^\dagger \widehat{\otimes}^\mathbf{L}_{B[T]} B \langle T \rangle^\dagger$ is represented by the two-term complex
\begin{equation}
    \Big[B\langle S, T \rangle^\dagger \xrightarrow[]{\times (1-ST)}  B\langle S, T \rangle^\dagger \Big],
\end{equation}
which is a resolution of $ B\langle T^{-1}, T \rangle^\dagger$. 
\end{proof}
\begin{lem}\label{lem:uppershriektransformation}
Let $f: X \to Y$ be an exhaustible morphism of dagger-analytic varieties and let $t: Y^\prime \to Y$ be a transversal morphism. Let $t^\prime, f^\prime$ be determined by the Cartesian square 
\begin{equation}
% https://q.uiver.app/#q=WzAsNCxbMCwwLCJYXlxccHJpbWUiXSxbMSwwLCJZXlxccHJpbWUiXSxbMSwxLCJZIl0sWzAsMSwiWCJdLFswLDEsImZeXFxwcmltZSJdLFsxLDIsInQiXSxbMywyLCJmIiwyXSxbMCwzLCJ0XlxccHJpbWUiLDJdLFswLDIsIiIsMSx7InN0eWxlIjp7Im5hbWUiOiJjb3JuZXItaW52ZXJzZSJ9fV1d
\begin{tikzcd}[cramped]
	{X^\prime} & {Y^\prime} \\
	X & Y
	\arrow["{f^\prime}", from=1-1, to=1-2]
	\arrow["{t^\prime}"', from=1-1, to=2-1]
	\arrow["\ulcorner"{anchor=center, pos=0.125}, draw=none, from=1-1, to=2-2]
	\arrow["t", from=1-2, to=2-2]
	\arrow["f"', from=2-1, to=2-2]
\end{tikzcd}
\end{equation}
Then there is a natural morphism 
\begin{equation}
    t^{\prime,*} f^! \to f^{\prime,!} t^*.
\end{equation}
\end{lem}
\begin{proof}
The desired morphism is adjoint to $f^\prime_! t^{\prime,*} f^! \simeq t^*f_! f^! \to t^*$, where the first morphism is base-change (Proposition \ref{prop:lowershriekbasechange}), and the second is induced by the counit of $f^! \dashv f^!$.
\end{proof}
\begin{prop}\label{prop:Grothdual1dim}
Let $Y$ be a dagger-analytic variety and let $f: X = \mathring{\mathbb{D}}^1_K \times Y \to Y$ be the projection off the dagger unit disk. Then:
\begin{enumerate}[(i)]
    \item $f^!$ commutes with restrictions. That is, if $t: U \to Y $ is the inclusion of an admissible open subset then the natural morphism $t^{\prime,*} f^! \to f^{\prime,!} t^*$ is an equivalence. 
    \item The natural morphism $f^! 1_Y \widehat{\otimes}_X f^* \to f^!$ is an equivalence;
    \item There is an equivalence $f^!1_Y \simeq 1_X[1]$. 
\end{enumerate}
\end{prop}
\begin{proof}
As in the proof of \cite[Theorem 12.17]{CondensedComplexGeometry}, in order to prove the claims (i) and (ii) one may use descent to reduce to the case when $Y = \operatorname{Sp}(B)$ is a dagger affinoid. Then one factors $f$ as $f = gj$ where $g: \mathbb{D}^1_K \times \operatorname{Sp}(B) \to \operatorname{Sp}(B)$ is the projection. By Proposition \ref{prop:hlowershriekadjunction}, the functor $j_!$ is fully-faithful. By the previous discussion the forgetful functor from $\operatorname{QCoh}(\mathbb{D}^1_K \times \operatorname{Sp}(B)) = \operatorname{Mod}_{B\langle T\rangle^\dagger}(\operatorname{QCoh}(\operatorname{Sp}(K)))$ to $ \operatorname{Mod}_{B [ T ]}(\operatorname{QCoh}(\operatorname{Sp}(K)))$ is fully-faithful.

Let $M \in \operatorname{QCoh}(X)$. By base-change, c.f. Proposition \ref{prop:lowershriekbasechange}, $j_!M$ dies on base-change to $B\langle T^{-1}, T \rangle^\dagger$. Hence by Lemma \ref{lem:formalLaurent} above, $j_!M$ (viewed as a $B[T]$-module) dies on base-change to $B\langle T^{-1}, T ]^\dagger$. In particular $j_!M$ (viewed as a $B[T]$-module) dies on base-change to $B(\!(T^{-1})\!)$.

Now let $N \in \operatorname{QCoh}(\operatorname{Sp}(B))$. Using only that $M$ dies on base-change to $B(\!(T^{-1})\!)$, so that any morphism from $M$ to a $B(\!(T^{-1})\!)$-module vanishes, we now calculate as in the proof of \cite[Theorem 12.17]{CondensedComplexGeometry}:
\begin{equation}
\begin{aligned}
R\underline{\operatorname{Hom}}_{\mathring{\mathbb{D}}^1_K \times Y}(M, f^!N) &\simeq R\underline{\operatorname{Hom}}_{B}(j_! M, N)  \\
&\simeq R\underline{\operatorname{Hom}}_{B[T]}(j_!M, R\underline{\operatorname{Hom}}_B(B[T],N)) \\ 
&\simeq R\underline{\operatorname{Hom}}_{B[T]}(j_!M, N(\!(T^{-1})\!)/N[T]) \\
&\simeq R\underline{\operatorname{Hom}}_{B[T]}(j_!M, N[T])[1].
\end{aligned}
\end{equation}
Now using that $j_!M$ (viewed as a $B[T]$)-module) dies on base change to $B\langle T^{-1}, T]^\dagger$, there is an equivalence 
\begin{equation}
   R\underline{\operatorname{Hom}}_{B[T]}(j_!M, N[T]) \simeq  R\underline{\operatorname{Hom}}_{B[T]}(j_!M, \operatorname{Fib}(B[T] \to B\langle T^{-1}, T]^\dagger)\widehat{\otimes}^\mathbf{L}_B N). 
\end{equation}
However, the natural morphism
\begin{equation}
     \operatorname{Fib}(B[T] \to B\langle T^{-1}, T]^\dagger) \to \operatorname{Fib}(B\langle T\rangle^\dagger \to B\langle T^{-1}, T\rangle^\dagger)
\end{equation}
is an equivalence as  $B\langle T^{-1}, T]^\dagger/B[T] \cong B\langle T^{-1}, T\rangle^\dagger/B\langle T\rangle^\dagger$. Hence, we may calculate further: 
\begin{equation}
    \begin{aligned}
    R\underline{\operatorname{Hom}}_{B[T]}(j_!M, N[T]) &\simeq  R\underline{\operatorname{Hom}}_{B[T]}(j_!M, \operatorname{Fib}(B[T] \to B\langle T^{-1}, T]^\dagger)\widehat{\otimes}^\mathbf{L}_B N) \\
    &\simeq R\underline{\operatorname{Hom}}_{B[T]}(j_!M, \operatorname{Fib}(B\langle T\rangle^\dagger \to B\langle T^{-1}, T\rangle^\dagger) \widehat{\otimes}^\mathbf{L}_B N) \\
    &\simeq R\underline{\operatorname{Hom}}_{B\langle T \rangle^\dagger}(j_!M, \operatorname{Fib}(B\langle T\rangle^\dagger \to B\langle T^{-1}, T\rangle^\dagger) \widehat{\otimes}^\mathbf{L}_B N) \\
    &\simeq R\underline{\operatorname{Hom}}_{B\langle T \rangle^\dagger}(j_!M, B\langle T\rangle^\dagger \widehat{\otimes}^\mathbf{L}_B N) \\
    &\simeq R \underline{\operatorname{Hom}}_{\mathring{\mathbb{D}}^1_K \times Y}(M, f^*N). 
    \end{aligned}
\end{equation}
In the third line we used fully-faithfulness and in the fourth line we used that $j_!M$ dies on base-change to $B \langle T^{-1}, T\rangle^\dagger$. This gives $f^! \simeq f^*[1]$ via an equivalence compatible with restrictions. 

For (iii), let $Y$ again be arbitrary. We note that by Lemma \ref{lem:uppershriektransformation} applied to the square 
\begin{equation}
    % https://q.uiver.app/#q=WzAsNCxbMCwwLCJcXG1hdGhyaW5ne1xcbWF0aGJie0R9fV4xX0tcXHRpbWVzIFkiXSxbMSwwLCJZIl0sWzAsMSwiXFxtYXRocmluZ3tcXG1hdGhiYntEfX1eMV9LIl0sWzEsMSwiKiJdLFswLDEsImY9IHBeXFxwcmltZSJdLFswLDIsIlxccGleXFxwcmltZSIsMl0sWzEsMywiXFxwaSJdLFsyLDMsInAiLDJdLFswLDMsIiIsMSx7InN0eWxlIjp7Im5hbWUiOiJjb3JuZXItaW52ZXJzZSJ9fV1d
\begin{tikzcd}[cramped]
	{\mathring{\mathbb{D}}^1_K\times Y} & Y \\
	{\mathring{\mathbb{D}}^1_K} & {*}
	\arrow["{f= p^\prime}", from=1-1, to=1-2]
	\arrow["{\pi^\prime}"', from=1-1, to=2-1]
	\arrow["\ulcorner"{anchor=center, pos=0.125}, draw=none, from=1-1, to=2-2]
	\arrow["\pi", from=1-2, to=2-2]
	\arrow["p"', from=2-1, to=2-2]
\end{tikzcd}
\end{equation}
there is a natural transformation $\pi^{\prime,*} p^! \to f^! \pi^*$. Plugging in the unit object we obtain $\pi^{\prime,*}p^!K \to f^!1_Y$. By using the equivalence $p^!K \simeq 1_{\mathring{\mathbb{D}}^1_K}[1]$ coming from (ii), we obtain a morphism $1_X[1] \to f^!1_Y$ which is compatible with restrictions by (i). Hence we may invoke descent, and conclude that this is an equivalence by (ii).  
\end{proof}
\begin{cor}\label{cor:Grothdualndim}
Let $Y$ be a dagger-analytic variety and let $f: X = \mathring{\mathbb{D}}^n_K \times Y \to Y$ be the projection off the $n$-dimensional open unit disk. Then:
\begin{enumerate}[(i)]
    \item $f^!$ commutes with restrictions;
    \item The natural morphism $f^!1_Y \widehat{\otimes}_X f^* \to f^!$ is an equivalence, 
    \item There is an equivalence $f^!1_Y \simeq 1_X[n]$.
\end{enumerate}
\end{cor}
\begin{proof}
As in the proof of Proposition \ref{prop:Grothdual1dim} above, to prove claims (i) and (ii) one may use descent to reduce to the case when $Y = \operatorname{Sp}(A)$ is affinoid, and then both claims follow by factoring $f$ as the composite
\begin{equation}
    \mathring{\mathbb{D}}^n_K \times Y \to \mathring{\mathbb{D}}^{n-1}_K \times Y \to \dots \to \mathring{\mathbb{D}}^1_K \times Y \to Y
\end{equation}
and repeatedly applying Proposition \ref{prop:Grothdual1dim}. In this situation the $!$-functors can be seen to compose by Corollary \ref{cor:lowershriekcompose}.

(iii): By considering the factorization of $p: \mathring{\mathbb{D}}^n_K \to *$ as $\mathring{\mathbb{D}}^n_K \to \mathring{\mathbb{D}}^{n-1}_K \to \dots \to \mathring{\mathbb{D}}^1_K \to *$ and repeatedly applying Proposition \ref{prop:Grothdual1dim} we arrive at the equivalence  $p^!K \simeq 1_{\mathring{\mathbb{D}}^n_K}[n]$; here we again appeal to Corollary \ref{cor:lowershriekcompose} to ensure that the $!$-functors compose. Now we may proceed in an entirely similar way to the proof of Proposition \ref{prop:Grothdual1dim}(iii) to deduce the equivalence $f^!1_Y \simeq 1_X[n]$ for arbitrary dagger-analytic varieties $Y$.
\end{proof}
\begin{lem}
Let $j: X \to Y$ be an open immersion of dagger analytic varieties. Assume that $j$ is exhaustible. There is a canonical morphism $j^! \to j^*$.
\end{lem}
\begin{proof}
The desired morphism is the composite $j^! \to j^!j_*j^* \simeq j^*$, where the first morphism is obtained from the unit of $j^* \dashv j_*$ and the second is base-change. 
\end{proof}
\begin{lem}\label{lem:partiallyproperopen} 
Let $j: X \to Y$ be an open immersion of dagger analytic varieties which is exhaustible and partially-proper. Then, the canonical morphism $j^! \to j^*$ is an equivalence. 
\end{lem}
\begin{proof}
By descent, one may reduce to the case when $Y$ is affinoid. The correspondence between open subsets of the Berkovich space and partially-proper open immersions \cite[Lemma 8.1.7]{HuberEtale}, together with \cite[Lemma 8.1.8(iii)]{HuberEtale}, implies that $X = \bigcup_{i \in \mathcal{I}} U_i$ where $j_i: U_i \hookrightarrow Y$ are admissible open subsets of the form $U_i = Y \setminus V_i$ where $V_i \subseteq Y$ are quasi-compact admissible open subsets. For such subsets we know that $j_i^! \xrightarrow{\sim} j_i^*$, by Proposition \ref{prop:hlowershriekadjunction} (see also Remark \ref{rmk:exhaustiblesufficient}). Hence by descent on $X$ (appealing to Corollary \ref{cor:lowershriekcompose} to ensure that $!$-functors compose), we conclude that $j^! \xrightarrow[]{\sim} j^*$. 
\end{proof}
The following Lemma of course follows from the previous, but since the proof is much simpler in this case we decided to include it. 
\begin{lem}\label{lem:openclosedimmersion}
Let $j: X \to Y$ be an immersion of dagger analytic varieties which is both open and closed. Then the canonical morphism $j^! \to j^*$ is an equivalence. 
\end{lem}
\begin{proof}
Because $j$ is both open and closed, we may write $j: X \to X \sqcup Z = Y$ where $Z$ is the (open and closed) complement of $X$ in $Y$. Let $k: Z \to Y$ be the inclusion. By descent, there is an equivalence \begin{equation}\label{eq:disjointj^!}
    \operatorname{id} \simeq j_*j^* \oplus k_*k^*.
\end{equation} 
By base-change, $j^!j_* \simeq \operatorname{id}$ and $j^!k_* \simeq 0$. Hence, applying $j^!$ to both sides of \eqref{eq:disjointj^!} gives $j^! \xrightarrow{\sim} j^*$.
\end{proof}
\begin{lem} \label{lem:separatedetale1}
Let $f: X \to Y$ be a separated \'etale morphism of dagger analytic varieties. Then:
\begin{enumerate}[(i)]
    \item Assume that $f$ is exhaustible. There is a canonical morphism $f^! \to f^*$. 
    \item If $f$ is finite then the canonical morphism $f^* \to f^!$ is an equivalence compatible with restrictions.
\end{enumerate}
\end{lem}
\begin{proof}
(i): Because $f$ is separated and \'etale, its diagonal $\Delta: X \to X \times_Y X$ is an open and closed immersion. Hence by Lemma \ref{lem:openclosedimmersion}, there is a canonical equivalence $\Delta^! \xrightarrow[]{\sim} \Delta^*$. Let $\pi_1, \pi_2 : X \times_Y X$ be the projections. Because $f$ is exhaustible, and transversal by Proposition \ref{prop:smoothbasechange}, there is a canonical morphism $\pi_2^*f^! \to \pi_1^! f^*$. Hence we obtain a morphism $f^! \to f^*$ via the composite 
\begin{equation}
    f^! \simeq \Delta^* \pi_2^* f^! \simeq \Delta^! \pi_2^*f^! \to \Delta^! \pi_1^! f^* \simeq f^*.  
\end{equation}
(ii): In order to prove the claim we may use descent to reduce to the case when $Y = \operatorname{Sp}(B)$ is affinoid, in which case (since $f$ is finite) then $X = \operatorname{Sp}(A)$ is also affinoid and $f$ is induced by a finite \'etale morphism $f^\# : B \to A$. Let $M \in \operatorname{QCoh}(Y)$. Since $A$ is finite projective as a $B$-module the canonical morphism $R\underline{\operatorname{Hom}}_B(A,B) \widehat{\otimes}_B^\mathbf{L} M \to R\underline{\operatorname{Hom}}_B(A,M)$ is an equivalence and $R\underline{\operatorname{Hom}}_B(A,B) \simeq \underline{\operatorname{Hom}}_B(A,B)$. Hence the canonical morphism $f^!M \to f^*M$ is seen to be equivalent to a morphism
\begin{equation}
\underline{\operatorname{Hom}}_B(A,B)\widehat{\otimes}_B^\mathbf{L} M \to A \widehat{\otimes}_B^\mathbf{L} M.
\end{equation}
The morphism $\underline{\operatorname{Hom}}_B(A,B) \to A$ is seen to be an isomorphism because of the classical theory of the trace for finite \'etale morphisms: it has an inverse sending $1 \in A$ to $\operatorname{Tr}_{f^\#}: A \to B$. This gives an equivalence $f^!M \xrightarrow[]{\sim} f^*M$ compatible with restrictions to affinoid subdomains. 
\end{proof}
\begin{prop}\label{prop:partiallyproperetale}
Let $f: X \to Y$ be an exhaustible and partially-proper \'etale morphism of dagger-analytic varieties. Then the canonical morphism $f^! \to f^*$ is an equivalence.
\end{prop}
\begin{proof}
By descent we may reduce to the case when $Y$ is affinoid. According to \cite[Proposition 8.3.4]{HuberEtale} we may equivalently regard $f$ as an \'etale morphism of strictly Hausdorff $K$-analytic spaces $f_{\mathrm{max}}: X_{\mathrm{max}} \to Y_{\mathrm{max}}$. According to \cite[Definition 4.2.19]{temkin_introduction_2011} every such morphism is locally on the source and target (in the Berkovich topology on $X_{\mathrm{max}}$ and $Y_{\mathrm{max}}$) finite \'etale. Hence using the correspondence between open subsets of the Berkovich space and partially-proper open immersions \cite[Lemma 8.1.7]{HuberEtale}, and descent on $Y$, we reduce to the following situation: there exists a cover $\{X_i\}_{i \in \mathcal{I}}$ of $X$ by partially-proper open immersions $j_i: X_i \to X$ such that each $f_i: X_i \to Y$ is finite \'etale. We note that each $j_i$ is exhaustible by Lemma \ref{lem:exhaustibleproperties}(ii). Hence, by Lemma \ref{lem:separatedetale1} each $f_i$ satisfies $f_i^! \xrightarrow[]{\sim} f_i^*$ and by Lemma \ref{lem:partiallyproperopen} each $j_i$ satisfies $j_i^! \xrightarrow[]{\sim} j_i^*$. Hence by descent on $X$ (appealing to Corollary \ref{cor:lowershriekcompose} to ensure that $!$-functors compose), we conclude that $f^! \to f^*$ is an equivalence.
\end{proof}
\begin{thm}
Let $f: X \to Y$ be a smooth, exhaustible and partially-proper morphism of dagger-analytic varieties. Then:
\begin{enumerate}[(i)]
    \item The functor $f^!$ commutes with restrictions;
    \item The canonical morphism $f^! 1_Y \widehat{\otimes}_X f^* \to f^!$ is an equivalence.
\end{enumerate}
\end{thm}
\begin{proof}
In order to prove both assertions we may use descent to reduce to the case when $Y$ is affinoid. Due to \cite[Proposition 2.5.17]{BerkovichSpectral}, the partially-properness of $f$ is equivalent to $f_{\mathrm{max}} : X_{\mathrm{max}} \to Y_{\mathrm{max}}$ being a \emph{closed} morphism of strictly Hausdorff $K$-analytic spaces. It is also \emph{quasi-smooth} and hence by \cite[Corollary 5.4.8]{DucrosBerkovich}, $f_{\mathrm{max}} : X_{\mathrm{max}} \to Y_{\mathrm{max}}$ is a \emph{smooth} morphism of strictly Hausdorff $K$-analytic spaces. Hence, $f_\mathrm{max}$ factors (locally in the Berkovich topology on $X_{\mathrm{max}}$) as an \'etale morphism followed by the projection off the affine line. By using the correspondence between \'etale morphisms of strictly Hausdorff $K$-analytic spaces and partially-proper \'etale morphisms \cite[Proposition 8.3.4]{HuberEtale} we see that there is a cover $\{X_i\}_{i \in \mathcal{I}}$ of $X$ by partially-proper open immersions $j_i: X_i \to X$ such that each restriction $f_i: X_i \to Y$ factors as $X_i \to \mathbb{A}^{n_i}_Y \to Y$, where $X_i \to \mathbb{A}^{n_i}_Y$ is partially-proper and \'etale, and $\mathbb{A}^{n_i}_Y \to Y$ is the projection. By Corollary \ref{cor:exhaustiblecriterion}, each $X_i \to \mathbb{A}^{n_i}_Y$ is exhaustible, and $\mathbb{A}^{n_i}_Y \to Y$ is exhaustible, hence $f_i$ is exhaustible. Then by Lemma \ref{lem:exhaustibleproperties}, each $j_i$ is also exhaustible. 

Both claims (i) and (ii) hold for the projection $\mathbb{A}^{m}_Y \to Y$. Indeed, one considers the covering of $\mathbb{A}^{m}_Y$ by open disks $k_\varrho : \mathring{\mathbb{D}}^m_Y(\varrho) \to \mathbb{A}^{m}_Y$ of increasing radius $\varrho \in \sqrt{|K|^\times}$. Each $k_\varrho$ satisfies $k_\varrho^! \simeq k_\varrho^*$ by Proposition \ref{prop:hlowershriekadjunction}, so that by Corollary \ref{cor:Grothdualndim} and descent (on $\mathbb{A}^{m}_Y$) both (i) and (ii) are true for this morphism. 

Hence by Proposition \ref{prop:partiallyproperetale} both claims (i) and (ii) hold for the morphisms $f_i: X_i \to Y$ (appealing to Corollary \ref{cor:lowershriekcompose} to ensure that $!$-functors compose). Now by Lemma \ref{lem:partiallyproperopen} above, each $j_i$ satisfies $j_i^! \xrightarrow[]{\sim} j_i^*$ and hence by descent on $X$ (appealing to Corollary \ref{cor:lowershriekcompose} to ensure that $!$-functors compose) we deduce that both claims (i) and (ii) hold for $f$. 
\end{proof}
\begin{rmk}\label{rmk:final}
\begin{enumerate}[(i)]
    \item Every smooth proper morphism $f: X \to Y$ of dagger-analytic varieties, is automatically exhaustible and partially proper. Indeed, a morphism of dagger-analytic varieties is proper if and only if it is partially-proper and quasi-compact, and every qcqs morphism is exhaustible. 
    \item By Corollary \ref{cor:exhaustiblecriterion}, we recall that every smooth and partially proper morphism $f: X \to Y$ of dagger-analytic varieties with $Y$ quasi-paracompact and separated, is exhaustible. In particular when $Y$ is affinoid, this includes the case of relative Stein spaces \cite{vanderPutSerre}. 
\end{enumerate}

\end{rmk}

\Addresses

\end{document}